\documentclass[a4paper,11pt,reqno]{smfart}
\usepackage{amssymb,amsmath,mathrsfs,graphics,graphicx,mathptm,float,lscape}
\usepackage[T1]{fontenc}
\usepackage[english, francais]{babel}
\usepackage[all]{xy}
\setcounter{tocdepth}{1}

\theoremstyle{plain}
\newtheorem{thm}{Th\'eor\`eme}[section]
\newtheorem{pro}[thm]{Proposition}
\newtheorem{lem}[thm]{Lemme}
\newtheorem{cor}[thm]{Corollaire}
\newtheorem{theoalph}{Theor\`eme}

\newtheorem{coralph}[theoalph]{Corollaire}

\theoremstyle{definition}

\newtheorem{eg}[thm]{Exemple}

\newtheorem{rem}[thm]{Remarque}
\newtheorem{rems}[thm]{Remarques}

\usepackage[colorlinks, linktocpage, 
citecolor = blue, linkcolor = blue]{hyperref}

\def\og{\leavevmode\raise.3ex\hbox{$\scriptscriptstyle\langle\!\langle$~}}
\def\fg{\leavevmode\raise.3ex\hbox{~$\!\scriptscriptstyle\,\rangle\!\rangle$}}

\setlength{\textwidth}{16.7cm} \setlength{\textheight}{21cm}
\setlength{\topmargin}{0cm} \setlength{\headheight}{0.59cm}
\setlength{\headsep}{1.5cm} \setlength{\oddsidemargin}{-0.4cm}
\setlength{\evensidemargin}{-0.4cm} \marginparwidth 1.9cm
\marginparsep 0.4cm \marginparpush 0.4cm \footskip 2.2cm
\setlength{\baselineskip}{0.8cm}

\addtocounter{section}{0}             
\numberwithin{equation}{section}       

\begin{document}
\selectlanguage{french}
\title{Centralisateurs dans le groupe de Jonqui\`eres}

\author{Dominique \textsc{Cerveau}}

\address{Membre de l'Institut Universitaire de France.
IRMAR, UMR 6625 du CNRS, Universit\'e de Rennes $1$, $35042$ Rennes, France.}
\email{dominique.cerveau@univ-rennes1.fr}

\author{Julie \textsc{D\'eserti}}
\address{Universit\"{a}t Basel, Mathematisches Institut, Rheinsprung $21$, CH-$4051$ Basel, Switzerland.}
\email{julie.deserti@unibas.ch}
\medskip
\address{On leave from Institut de Math\'ematiques de Jussieu, Universit\'e Paris $7$, Projet G\'eom\'etrie et Dynamique, Site Chevaleret, Case $7012$, $75205$ Paris Cedex 13, France.}
\email{deserti@math.jussieu.fr}
\thanks{Le second auteur est soutenu par le Swiss National Science Foundation grant no PP00P2\_128422 /1.}

\maketitle

\begin{altabstract}
\selectlanguage{english}
We give a criterion to determine when the degree growth of a birational map of the complex projective plane which fixes (the action on the basis of the fibration is trivial) a rational fibration is linear up to conjugacy. We also compute the centraliser of such maps. It allows us to describe the centraliser of the birational maps of the complex projective plane which preserve a rational fibration (the action on the basis of the fibration being not necessarily trivial); this question is related to some classical problems of difference equations. 

\noindent\emph{$2010$ Mathematics Subject Classification. --- $14E05$, $14E07$}
\end{altabstract}

\selectlanguage{french}

\section{Introduction}

\noindent La description des centralisateurs des syst\`emes dynamiques discrets est consid\'er\'ee comme un probl\`eme important tant en dynamique r\'eelle que complexe. 
Ainsi Julia (\cite{Julia, Julia2}) puis Ritt (\cite{Ritt}) montrent que l'ensemble $\mathrm{C}(f)=\big\{g\colon\mathbb{P}^1\to\mathbb{P}^1\,\big\vert\, fg=gf\big\}$ des 
fonctions rationnelles commutant \`a une fonction rationnelle fix\'ee $f$ se r\'eduit en g\'en\'eral aux it\'er\'es $f_0^\mathbb{N}=\big\{f_0^n\,\vert\,n\in\mathbb{N}\big\}$ 
de l'un de ses \'el\'ements sauf dans quelques cas sp\'eciaux (\`a conjugaison pr\`es les mon\^omes $z^k$, les polyn\^omes de Tchebychev, les exemples de Latt\`es...) Plus tard, 
dans les ann\'ees $60$, Smale demande si un diff\'eomorphisme g\'en\'erique $f\colon M\to M$ d'une vari\'et\'e compacte a son centralisateur trivial, c.-\`a.d. si 
$$\mathrm{C}(f)=\big\{g\in\mathrm{Diff}^\infty(M)\,\vert\, fg=gf\big\}$$ est r\'eduit \`a $f^\mathbb{Z}=\big\{f^n\,\vert\,n\in\mathbb{Z}\big\}$. De nombreux math\'ematiciens 
ont apport\'e leur contribution parmi lesquels Bonatti, Crovisier, Fisher, Palis, Wilkinson, Yoccoz (\cite{BCW, Fi, Fi2, Pa, PaYo2, PaYo}). De m\^eme dans le cadre local, 
plus pr\'ecis\'ement dans l'\'etude des \'el\'ements de $\mathrm{Diff}(\mathbb{C},0)$, le groupe des germes de diff\'eomorphismes holomorphes \`a l'origine de $\mathbb{C}$, 
la description des centralisateurs joue un r\^ole crucial. Ainsi Ecalle montre que si~$f\in\mathrm{Diff}(\mathbb{C},0)$ est tangent \`a l'identit\'e alors, sauf cas exceptionnel, 
son centralisateur se r\'eduit encore \`a un $f_0^\mathbb{Z}$ (\emph{voir} \cite{Ec, Ec2}); ceci permet en particulier de d\'ecrire les sous-groupes r\'esolubles non ab\'eliens de
 $\mathrm{Diff}(\mathbb{C},0)$ (\emph{voir} \cite{CM}). \`A l'inverse Perez-Marco montre qu'il existe des sous-groupes ab\'eliens non lin\'earisables et non d\'enombrables 
de~$\mathrm{Diff}(\mathbb{C},0)$ en liaison avec des questions difficiles de \og mauvais petits diviseurs\fg\, (\cite{PM}).

\bigskip

\noindent Nous nous proposons de terminer (\`a indice fini pr\`es) le calcul des centralisateurs des \'el\'ements du groupe de Cremona $\mathrm{Bir}(\mathbb{P}^2)$. Comme 
on peut le voir entre autres dans~\cite{BlDe, CaCe} ces calculs de centralisateurs apparaissent naturellement dans les probl\`emes de repr\'esentations de groupes abstraits 
dans les groupes de transformations. Le groupe $\mathrm{Bir}(\mathbb{P}^2)$ des transformations birationnelles du plan projectif complexe est constitu\'e des applications 
rationnelles
\begin{align*}
&\phi\colon\mathbb{P}^2(\mathbb{C})\dashrightarrow\mathbb{P}^2(\mathbb{C}), && (x:y:z)\dashrightarrow(\phi_0(x,y,z):\phi_1(x,y,z):\phi_2(x,y,z)),
\end{align*}
les $\phi_i$ d\'esignant des polyn\^omes homog\`enes de m\^eme degr\'e et sans facteur commun, qui admettent (sur un ouvert de Zariski) un inverse du m\^eme type. Rappelons 
que le premier degr\'e dynamique de $\phi$ est donn\'e par~$\lambda(\phi)=\displaystyle\lim_n(\deg \phi^n)^{1/n}$. 

\noindent Par \og homog\'en\'eisation\fg\, chaque automorphisme polynomial de $\mathbb{C}^2$ produit un \'el\'ement de $\mathrm{Bir}(\mathbb{P}^2)$, ainsi le 
{\it grou\-pe}~$\mathrm{Aut}[\mathbb{C}^2]$ {des automorphismes polynomiaux de} $\mathbb{C}^2$ peut \^etre vu comme un sous-groupe de $\mathrm{Bir}(\mathbb{P}^2)$; c'est le 
produit amalgam\'e du groupe~$\mathrm{A}$ des automorphismes affines et du {\it groupe \'el\'ementaire} d\'efini par
$$\mathrm{E}=\big\{(\alpha x+P(y),\beta y+\gamma)\,\big\vert\,\alpha,\,\beta,\,\gamma\in\mathbb{C},\,\alpha\beta\not =0,\, P\in\mathbb{C}[y]\big\}$$
le long de leur intersection (\cite{Ju}). En s'appuyant sur cette structure tr\`es particuli\`ere Friedland et Milnor ont montr\'e l'alternative suivante (\cite{FM}): soit $\phi$ 
dans $\mathrm{Aut} [\mathbb{C}^2]$ alors
\begin{itemize}
\item ou bien $\phi$ est conjugu\'e \`a un \'el\'ement de $\mathrm{E}$; 

\item ou bien $\phi$ est de {\it type H\'enon}, c.-\`a.d. conjugu\'e \`a $g_1\ldots g_n$ o\`u $g_i=(y,P_i(y)-\delta_i x)$, $P_i\in\mathbb{C}[y]$, $\deg P_i\geq 2$, $\delta_i\in
\mathbb{C}^*$.
\end{itemize} 
Si $\phi$ est dans $\mathrm{Aut} [\mathbb{C}^2]$, on a $\lambda(\phi)=1$ (resp. $\lambda(\phi)>~1$) si et seulement si $\phi$ est un automorphisme \'el\'ementaire (resp. un 
automorphisme de type H\'enon). \'Etant donn\'e un automorphisme polynomial $\phi$ de $\mathbb{C}^2,$ on appelle centralisateur de $\phi$ le groupe des automorphismes polynomiaux 
qui commutent \`a $\phi$. Mis \`a part les automorphismes r\'esonants du type 
\begin{align*}
&(\beta^dx+\beta^dy^dq(y^r),\beta y), &&\beta^r=1,\,d\geq 1, q\in\mathbb{C}[y]\setminus\mathbb{C}
\end{align*}
tout \'el\'ement de $\mathrm{E}$ se plonge dans un flot. En particulier le centralisateur d'un \'el\'ement de $\mathrm{E}$ non r\'esonant est non d\'enombrable; quant aux 
\'el\'ements r\'esonants ils commutent aux automorphismes de la forme $(x+ay^d,y)$, $a\in\mathbb{C}$, leur centralisateur est donc aussi non d\'enombrable. Par contre le 
centralisateur d'un \'el\'ement de type H\'enon est d\'enombrable; plus pr\'ecis\'ement si $\phi$ est type H\'enon, son centralisateur est isomorphe \`a $\mathbb{Z}\rtimes
\mathbb{Z}/p\mathbb{Z}$ pour un certain $p$ et g\'en\'erique\-ment~$p=1$ (\emph{voir} \cite{La}). On a donc pour tout $\phi$ dans $\mathrm{Aut}[\mathbb{C}^2]$ l'\'equivalence 
suivante: $\phi$ est de type H\'enon si et seulement si son centralisateur est d\'enombrable. Le groupe~$\mathrm{Aut}[\mathbb{C}^2]$ pr\'esente donc une dichotomie: d'un 
c\^ot\'e les automorphismes de type H\'enon, c\'el\`ebres pour leur dynamique compliqu\'ee, et de l'autre les automorphismes conjugu\'es \`a des \'el\'ements de $\mathrm{E}$. 
Cette dichotomie peut se ca\-ract\'eriser via le degr\'e dynamique, ou via le caract\`ere d\'enombrable ou non du centralisateur, ou encore via la pr\'esence d'une fibration 
rationnelle invariante. Comme nous allons le voir cette dichotomie ne persiste pas dans $\mathrm{Bir}(\mathbb{P}^2)$. Toutefois il y a un analogue \`a l'\'enonc\'e de Friedland 
et Milnor (\cite{DiFa, Gi}); une transformation birationnelle $\phi$ de $\mathbb{P}^2(\mathbb{C})$ satisfait une et une seule des propri\'et\'es suivantes:
\smallskip
\begin{itemize}
\item La suite $(\deg \phi^n)_n$ est born\'ee, $\phi$ est birationnellement conjugu\'e \`a un automorphisme d'une surface rationnelle et un it\'er\'e de $\phi$ est isotope 
\`a l'identit\'e.

\noindent Dans ce cas $\phi$ pr\'eserve une infinit\'e de fibrations rationnelles.

\smallskip

\item $\deg \phi^n\sim n$ alors $\phi$ n'est pas conjugu\'e \`a un automorphisme d'une surface rationnelle et $\phi$ pr\'eserve une unique fibration qui est rationnelle. \smallskip

\item $\deg \phi^n\sim n^2$ auquel cas $\phi$ est conjugu\'e \`a un automorphisme d'une surface rationnelle pr\'eservant une unique fibration qui est elliptique.\smallskip

\item $\deg \phi^n\sim\alpha^n$, $\alpha>1$.\smallskip
\end{itemize}

\noindent Dans les trois premi\`eres \'eventualit\'es on a $\lambda(\phi)=1$, dans la derni\`ere $\alpha=\lambda(\phi)>1$.

\noindent \`A l'inverse du cas polynomial il n'y a pas d'\'equivalence entre le fait d'avoir un centralisateur d\'enombrable et d'avoir un premier degr\'e dynamique 
strictement sup\'erieur \`a $1$. Dans la suite si $\phi$ est un \'el\'ement de $\mathrm{Bir}(\mathbb{P}^2)$ on note $\mathrm{C}(\phi)$ le centralisateur de $\phi$ 
dans $\mathrm{Bir}(\mathbb{P}^2)$: $$\mathrm{C}(\phi)=\big\{\psi\in\mathrm{Bir}(\mathbb{P}^2)\,\vert\,\psi\phi=\phi\psi\big\}.$$

\noindent La description des centralisateurs des \'el\'ements du groupe de Cremona d'ordre infini et \`a croissance born\'ee figure dans \cite{BlDe}. Elle d\'ecoule du 
fait suivant (\cite[Proposition 2.3]{BlDe}): soit~$\phi$ une transformation birationnelle d'ordre infini et telle que $(\deg\phi^n)_n$ soit born\'ee. Alors $\phi$ est conjugu\'e:
\begin{itemize}
\item
soit \`a $(\alpha x,\beta y)$, avec  $\alpha$, $\beta\in \mathbb{C}^*$, le noyau du morphisme $\mathbb{Z}^2 \to \mathbb{C}^*$, $(i,j)\mapsto \alpha^i \beta^j$ \'etant 
engendr\'e par $(k,0)$  pour un certain $k\in \mathbb{Z}$;
 
 \item soit \`a $(\alpha x, y+1)$, avec $\alpha\in \mathbb{C}^*$.
\end{itemize}
Ensuite on montre, dans la premi\`ere \'eventualit\'e, que (\cite{BlDe}) $$\mathrm{C}(\phi)=\big\{(\eta (x),y R(x^k))\ \big\vert\ R\in \mathbb{C}(x), 
\eta \in \mathrm{PGL}_2(\mathbb{C}), \eta(\alpha x)=\alpha\eta(x)\big\};$$ dans le cas g\'en\'erique ($k=0$), on a $\mathrm{C}(\phi)=\big\{(\gamma x,\delta y)\, 
\big\vert\, \gamma,\,\delta\in\mathbb{C}^*\big\}.$
Enfin dans le second on obtient (\cite{BlDe}) $$\mathrm{C}(\phi)=\big\{(\eta(x),y+R(x))\,\big\vert\,\eta\in \mathrm{PGL}_2(\mathbb{C}), \eta(\alpha x)=
\alpha \eta(x), R\in \mathbb{C}(x), R(\alpha x)=R(x)\big\}.$$ Notons que dans ce dernier cas pour $\alpha$ g\'en\'erique, $\mathrm{C}(\phi)$ est isomorphe \`a 
$\mathbb{C}^*\times\mathbb{C}$.

\noindent Dans le cas des transformations birationnelles d'ordre fini, la situation est tr\`es diff\'erente du cas lin\'eaire. Alors que le centralisateur d'une 
involution lin\'eaire (par exemple $(-x,y)$) est infini non d\'enombrable celui d'une involution de Bertini, resp. Geiser, est fini (\cite{BPV}). Pour des involutions 
de Bertini et Geiser g\'en\'eriques $\mathcal{I}$ le centralisateur se r\'eduit \`a $\big\{\mathrm{id},\,\mathcal{I}\big\}$ mais il existe de telles involutions avec 
un centralisateur plus gros. En effet dans \cite[Proposition 3.12]{CeDe} on construit \`a partir d'un feuilletage $\mathcal{F}_J$, dit de Jouanolou, de degr\'e~$2$ sur 
$\mathbb{P}^2(\mathbb{C})$ une involution birationnelle $\mathcal{I}_{\mathcal{F}_J}$; on montre que $\mathcal{I}_{\mathcal{F}_J}$ est une involution de Geiser. On 
constate que le groupe engendr\'e par $\mathcal{I}_{\mathcal{F}_J}$ et le groupe d'isotropie de $\mathcal{F}_J$ est un groupe fini d'ordre $42$ qui est contenu 
dans~$\mathrm{C}(\mathcal{I}_{\mathcal{F}_J})$.

\noindent Notons que lorsque $\deg\phi^n\sim n^2$ la description des automorphismes des courbes elliptiques permet d'affirmer que $\mathrm{C}(\phi)$ est virtuellement 
ab\'elien tout du moins dans le cas g\'en\'erique. En utilisant la classification des pinceaux de courbes elliptiques (pinceaux d'Halphen) on peut pr\'eciser dans tous 
les cas ces $\mathrm{C}(\phi)$ (\emph{voir} \cite{Ca})

\noindent Dans \cite{Ca} Cantat donne une description du centralisateur des \'el\'ements de premier degr\'e dynamique strictement sup\'erieur \`a $1$: soit $\phi$ une 
transformation birationnelle du plan projectif
complexe dont le premier degr\'e dynamique $\lambda(\phi)$ est strictement plus grand que $1$. Si $\psi$ est un \'el\'ement de $\mathrm{Bir}(\mathbb{P}^2)$ qui commute 
avec $\phi$, il existe deux entiers $m$ dans~$\mathbb{N}^*$ et $n$ dans $\mathbb{Z}$ tels que~$\psi^m=\phi^n$. 

\noindent  Dans \cite{De2} on \'etudie une famille de transformations birationnelles~$(f_{\alpha,\beta})$ ayant les propri\'et\'es suivantes: $f_{\alpha,\beta}$ est 
\`a croissance lin\'eaire, en particulier~$\lambda(f_{\alpha,\beta})=1,$ et poss\`ede un centralisateur d\'enombrable: $\mathrm{C}(f_{\alpha,\beta})=\langle f_{\alpha,
\beta}^n\,\vert\,n\in\mathbb{Z}\rangle$.

\noindent Dans cet article on s'attache en particulier \`a compl\'eter le cas manquant, celui des transformations birationnelles \`a croissance lin\'eaire. 
 Ceci termine tout du moins du point de vue qualitatif l'\'etude des centralisateurs des transformations birationnelles qui ne sont pas de torsion.

\bigskip

\noindent Dans un premier temps on \'etablit un crit\`ere \'el\'ementaire permettant d'affirmer qu'une transformation birationnelle  qui pr\'eserve une fibration rationnelle fibre \`a 
fibre est \`a croissance lin\'eaire (\S\ref{critere}); ce crit\`ere met en jeu un invariant matriciel projectif classique de type \og Baum-Bott\fg.

\begin{theoalph}\label{pr}
Soit $\phi$ un \'el\'ement du groupe de Cremona qui pr\'eserve une fibration rationnelle fibre \`a fibre. La transformation $\phi$ est \`a croissance lin\'eaire si 
et seulement si l'indice de Baum-Bott de $\phi$ appartient \`a~$\mathbb{C}(y)\setminus\mathbb{C}$.
\end{theoalph}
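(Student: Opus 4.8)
The plan is to put $\phi$ in a convenient normal form and translate everything into linear algebra over $\mathbb{C}(y)$. Since $\phi$ preserves a rational fibration fibrewise, the standard description of Jonqui\`eres transformations lets us conjugate $\phi$ in $\mathrm{Bir}(\mathbb{P}^2)$ so that the fibration becomes the second projection $(x,y)\dashrightarrow y$ and
$$\phi\colon(x,y)\dashrightarrow\Big(\frac{a(y)\,x+b(y)}{c(y)\,x+d(y)},\,y\Big),\qquad a,b,c,d\in\mathbb{C}(y),$$
so that $\phi$ is encoded by $M=M_\phi=\begin{pmatrix}a&b\\ c&d\end{pmatrix}\in\mathrm{PGL}_2(\mathbb{C}(y))$ and $\phi^n$ by $M^n$. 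I would then record two reductions. First, denoting by $\|N\|$ the largest degree of the entries of a primitive representative of $N\in\mathrm{PGL}_2(\mathbb{C}(y))$, one checks that $\deg\phi^n$ equals $\|M^n\|$ up to a bounded error; hence $(\deg\phi^n)_n$ is bounded (resp.\ an $O(n)$) if and only if $(\|M^n\|)_n$ is. Second, the Baum--Bott index $\mathrm{BB}(\phi)\in\mathbb{C}(y)$ is a fixed homographic function of $\mathrm{tr}(M)^2/\det(M)=\nu+2+\nu^{-1}$, where $\nu$ is the ratio of the two eigenvalues of $M$ --- a priori an element of the at most quadratic extension $L=\mathbb{C}(y)\big(\sqrt{\mathrm{tr}(M)^2-4\det(M)}\big)$. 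Since $\nu$ is a root of $t^{2}-\big(\mathrm{tr}(M)^2/\det(M)-2\big)t+1$, this gives the equivalence $\mathrm{BB}(\phi)\in\mathbb{C}(y)\setminus\mathbb{C}\iff\nu\notin\mathbb{C}$, which is the hinge of the proof.

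Next I would split into cases according to the conjugacy class of $M$ over $\overline{\mathbb{C}(y)}$. If $M$ is not semisimple, i.e.\ unipotent (possibly the identity; then $\mathrm{tr}(M)^2=4\det(M)$ and $\nu\equiv1$), its double fixed point $\tfrac{a-d}{2c}$ lies in $\mathbb{P}^1(\mathbb{C}(y))$, so $M$ is conjugate \emph{over $\mathbb{C}(y)$} to a translation $x\mapsto x+e(y)$ with $e\in\mathbb{C}(y)$; then $M^n$ is conjugate, by the same $n$-independent matrix, to $x\mapsto x+ne(y)$, so $\|M^n\|$ stays bounded and $\phi$ is not of linear growth --- in accordance with $\mathrm{BB}(\phi)$ being constant. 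If $M$ is semisimple, then over $L$ one can write $M=P\,\mathrm{diag}(\mu_+,\mu_-)\,P^{-1}$ with $P\in\mathrm{GL}_2(L)$ independent of $n$, whence $M^n=P\,\mathrm{diag}(\nu^n,1)\,P^{-1}$ in $\mathrm{PGL}_2(L)$.

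It then remains to read the growth off this last identity. If $\nu\in\mathbb{C}^\times$, the matrix $\mathrm{diag}(\nu^n,1)$ has bounded degree, hence so does $\|M^n\|_L$, hence --- the degree over $\mathbb{C}(y)$ differs from the one over $L$ only by the factor $[L:\mathbb{C}(y)]\le2$ --- $(\deg\phi^n)_n$ is bounded: no linear growth, again matching that $\mathrm{BB}(\phi)$ is constant. If $\nu\notin\mathbb{C}$, write $\nu=r/s$ with $r,s$ coprime; then $\mathrm{diag}(\nu^n,1)\equiv\mathrm{diag}(r^n,s^n)$ has degree $n\deg(\nu)\to\infty$, while $\|M^n\|_L\le2\|P\|_L+\|\mathrm{diag}(\nu^n,1)\|_L=O(n)$; and $\|M^n\|_L$ cannot remain bounded, since otherwise $\|\mathrm{diag}(\nu^n,1)\|_L=\|P^{-1}M^nP\|_L$ would be bounded too. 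Hence $(\deg\phi^n)_n$ is unbounded and $O(n)$, and by the classification of degree growths of birational maps recalled in the Introduction (bounded, $\sim n$, $\sim n^2$, or $\sim\alpha^n$) it must satisfy $\deg\phi^n\sim n$. Since $\mathrm{BB}(\phi)\notin\mathbb{C}$ exactly in this last case, the theorem follows.

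The step I expect to be the real crux is the equivalence ``$\mathrm{BB}(\phi)$ constant $\iff$ $\nu$ constant''. It genuinely uses that the field of constants $\mathbb{C}$ is algebraically closed: otherwise $\mathrm{tr}(M)^2-4\det(M)$ could be a non-square while $\mathrm{tr}(M)^2/\det(M)$ stays constant, producing loxodromic maps of constant index and so breaking the statement. Everything else is bookkeeping --- comparing $\deg\phi^n$ with $\|M^n\|$, and tracking how degrees behave under the double cover $L/\mathbb{C}(y)$, which merely doubles them and hence leaves the growth type unchanged.
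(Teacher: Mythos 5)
Your proof is correct and follows essentially the same route as the paper: your trichotomy unipotent / semisimple over $\mathbb{C}(y)$ / semisimple only over a quadratic extension $L$ is exactly the paper's reduction to the normal forms $\mathfrak{a}$, $\mathfrak{b}$, $\mathfrak{c}$, and your field $L$ is the function field of the curve $x^2=F(y)$ on which the paper reads off the growth of type-$\mathfrak{c}$ elements after conjugating them to multiplicative form. The hinge $\mathrm{BB}(\phi)\in\mathbb{C}\iff\nu\in\mathbb{C}$ appears there as the explicit computations $\mathrm{BB}(\phi)=\frac{(a+1)^2}{a}$ and $\mathrm{BB}(\phi)=\frac{4c(y)^2}{c(y)^2-F(y)}$ in cases $\mathfrak{b}$ and $\mathfrak{c}$, with the same use of $F$ not being a square where you invoke algebraic closedness of $\mathbb{C}$.
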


\noindent Une cons\'equence de \cite[Theorem 0.2]{DiFa} et du Th\'eor\`eme \ref{pr} est le fait suivant: {\it soit $\phi$ un \'el\'ement du groupe de Cremona qui
 pr\'eserve une fibration rationnelle fibre \`a fibre. La transformation $\phi$ est \`a croissance born\'ee si et seulement si l'indice de Baum-Bott de $\phi$ 
appartient \`a $\mathbb{C}$}.

\medskip

\noindent Au \S\ref{calculcentralisateur} on commence par \'etablir la propri\'et\'e suivante: {\it 
soit $\phi$ une transformation birationnelle qui pr\'eserve une fibration rationnelle $\mathcal{F}$ fibre \`a fibre. Alors ou bien $\phi$ est p\'eriodique, ou bien 
tous les \'el\'ements qui commutent \`a $\phi$ laissent $\mathcal{F}$ invariante $($pas n\'ecessairement fibre \`a fibre$)$.}

\noindent Soit $\phi$ une transformation birationnelle qui pr\'eserve une fibration rationnelle $\mathcal{F}$ fibre 
\`a fibre; $\phi$ est contenu dans un sous-groupe ab\'elien maximal, not\'e $\mathrm{Ab}(\phi)$, pr\'eservant 
$\mathcal{F}$ fibre \`a fibre. La nature de ces sous-groupes ab\'eliens maximaux est bien connue, nous la rappelons 
au \S\ref{Sec:J0}. On d\'ecrit les centralisateurs $\mathrm{C}(\phi)$ des \'el\'ements non p\'eriodiques $\phi$ de 
$\mathrm{Bir}(\mathbb{P}^2)$ qui pr\'eservent une fibration rationnelle fibre \`a fibre (\S\ref{calculcentralisateur}); 
cette description conduit \`a l'\'enonc\'e suivant.

\begin{theoalph}\label{ababelien}
Soit $\phi$ une transformation birationnelle qui pr\'eserve une fibration rationnelle fibre \`a fibre. Si $\phi$ est 
\`a croissance lin\'eaire, le centralisateur $\mathrm{C}(\phi)$ de $\phi$ est une extension finie de $\mathrm{Ab}(\phi)$.
\end{theoalph}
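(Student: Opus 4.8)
The plan is to exhibit $\mathrm{C}(\phi)$ as an extension of a finite group by $\mathrm{Ab}(\phi)$, using the action on the base of $\mathcal{F}$ together with Theorem~\ref{pr}. Since $\phi$ has linear growth, the sequence $(\deg\phi^n)_n$ is unbounded, so $\phi$ is not periodic; hence, by the property established at the beginning of \S\ref{calculcentralisateur}, every element of $\mathrm{C}(\phi)$ leaves $\mathcal{F}$ invariant, although possibly not fibre by fibre. A birational self-map of the base $\mathbb{P}^1$ of $\mathcal{F}$ lies in $\mathrm{PGL}_2(\mathbb{C})$, so the induced action on the base yields a homomorphism $\rho\colon\mathrm{C}(\phi)\to\mathrm{PGL}_2(\mathbb{C})$ fitting into a short exact sequence
\[
1\longrightarrow K\longrightarrow\mathrm{C}(\phi)\stackrel{\rho}{\longrightarrow}B\longrightarrow 1,
\]
where $B=\rho(\mathrm{C}(\phi))$ and $K=\ker\rho$ is the subgroup of those elements of $\mathrm{C}(\phi)$ that preserve $\mathcal{F}$ fibre by fibre. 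It then suffices to prove that $K=\mathrm{Ab}(\phi)$ and that $B$ is finite, since then $[\mathrm{C}(\phi):\mathrm{Ab}(\phi)]=|B|<\infty$.

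First I would identify $K$ with $\mathrm{Ab}(\phi)$. By construction $K$ is the centraliser of $\phi$ inside the group of birational transformations preserving $\mathcal{F}$ fibre by fibre; over the generic fibre this group is $\mathrm{PGL}_2(\mathbb{C}(y))$, the group recalled in \S\ref{Sec:J0}. As $\phi$ has infinite order, its centraliser in $\mathrm{PGL}_2(\mathbb{C}(y))$ is abelian — this can be read off the normal forms recalled in \S\ref{Sec:J0}, and also follows from the fact that an element of infinite order of $\mathrm{PGL}_2$ over a field of characteristic zero is, over the algebraic closure, semisimple with distinct eigenvalues or unipotent, so that its centraliser is a torus or a unipotent group. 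Thus $K$ is an abelian subgroup preserving $\mathcal{F}$ fibre by fibre and containing $\phi$, whence $K\subseteq\mathrm{Ab}(\phi)$ by maximality of $\mathrm{Ab}(\phi)$; conversely $\mathrm{Ab}(\phi)$ is abelian, contains $\phi$ and preserves $\mathcal{F}$ fibre by fibre, so $\mathrm{Ab}(\phi)\subseteq\mathrm{C}(\phi)$ and $\mathrm{Ab}(\phi)\subseteq K$. Hence $K=\mathrm{Ab}(\phi)$.

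Next I would show that $B$ is finite. Let $h\in\mathbb{C}(y)$ be the Baum-Bott index of $\phi$; by Theorem~\ref{pr} it is non-constant. This index is built from the Möbius transformation induced by $\phi$ on each fibre and is invariant under fibrewise conjugacy; since an element $\psi\in\mathrm{C}(\phi)$ sends the fibre over $y$ to the fibre over $\rho(\psi)(y)$ and conjugates there $\phi$ on the source fibre to $\phi$ on the target fibre, the Baum-Bott index of $\psi\phi\psi^{-1}$ equals $h\circ\rho(\psi)^{-1}$; as $\psi\phi\psi^{-1}=\phi$, comparison gives $h\circ\rho(\psi)=h$, i.e. $\rho(\psi)\in\mathrm{Sym}(h):=\{g\in\mathrm{PGL}_2(\mathbb{C})\,\vert\,h\circ g=h\}$. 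Therefore $B\subseteq\mathrm{Sym}(h)$. Finally $\mathrm{Sym}(h)$ is a Zariski-closed subgroup of $\mathrm{PGL}_2(\mathbb{C})$; were it infinite it would contain a non-trivial one-parameter subgroup $\{g_t\}$, and $h\circ g_t=h$ for all $t$ would force $h$ to be constant on the orbit of a point — a non-empty Zariski-open subset of $\mathbb{P}^1$ — contradicting that $h$ is non-constant. (Alternatively, $\mathrm{Sym}(h)$ acts faithfully on a generic fibre $h^{-1}(v)$: a non-trivial element of $\mathrm{PGL}_2(\mathbb{C})$ fixing such a fibre pointwise for two distinct generic values of $v$ would have infinitely many fixed points.) So $\mathrm{Sym}(h)$, and a fortiori $B$, is finite, and $\mathrm{C}(\phi)$ is a finite extension of $\mathrm{Ab}(\phi)$.

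The step I expect to require the most care is the one linking the Baum-Bott index to conjugacy, namely that $h$ is genuinely a fibrewise-conjugacy invariant which transforms through the base action $\rho$; this is where the precise matricial definition of the index given in \S\ref{critere} has to be invoked. The identification $K=\mathrm{Ab}(\phi)$ is also a real input rather than a formality: it rests on the description of the maximal abelian subgroups preserving a rational fibration fibre by fibre recalled in \S\ref{Sec:J0}, in particular on the fact that the centraliser of a non-periodic such transformation is abelian and coincides with the maximal abelian subgroup $\mathrm{Ab}(\phi)$ containing $\phi$.
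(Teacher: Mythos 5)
Your argument is correct, and its skeleton --- Proposition~\ref{carac} followed by the exact sequence $1\to\ker\pi_{2\vert_{\mathrm{C}(\phi)}}\to\mathrm{C}(\phi)\to\pi_2(\mathrm{C}(\phi))\to 1$ --- is also the one underlying the paper; but you control the two ends of that sequence differently. The paper proceeds by cases on the normal forms of \S\ref{Sec:J0}: for $(a(y)x,y)$ it computes kernel and image explicitly and identifies the image inside the finite group $\mathrm{Stab}(a)$ (Proposition~\ref{centtypeb}); for $\mathrm{J}_F$ with $\deg F\geq 3$ it bounds the image by the finite group of automorphisms of the positive-genus curve $x^2=F(y)$ commuting with $\tau=\left(-\frac{F(y)}{x},y\right)$ (Proposition~\ref{genregd}); only in the rational case $F=y$ does it invoke the conjugacy invariance of the Baum-Bott index (Lemme~\ref{secondcomp}) to land in $\mathrm{stab}\left(\frac{4c(y)^2}{c(y)^2-y}\right)$. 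You instead run the Baum-Bott argument uniformly: since $\mathrm{BB}$ is a conjugacy invariant of $\mathrm{PGL}_2(\mathbb{C}(y))$ and is non-constant by Theorem~\ref{pr}, the image of $\pi_2$ always lies in the finite group $\mathrm{stab}(\mathrm{BB}(\phi))$; and you identify the kernel with $\mathrm{Ab}(\phi)$ by the structural fact that the centraliser of an infinite-order element of $\mathrm{PGL}_2$ of a field is abelian (a torus or a unipotent group), rather than by the paper's explicit matrix computations. Both steps you flag as delicate do hold: the transformation rule $\mathrm{BB}(\psi\phi\psi^{-1})=\mathrm{BB}(\phi)\circ\pi_2(\psi)^{-1}$ is a one-line computation in $\mathrm{PGL}_2(\mathbb{C}(y))$, and the maximality argument for $K=\mathrm{Ab}(\phi)$ closes correctly because you also have the reverse inclusion $\mathrm{Ab}(\phi)\subseteq K$. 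Your route is shorter and bypasses the genus discussion entirely; what it buys in uniformity it loses in precision, since it does not recover the explicit descriptions $\mathrm{J}_m\rtimes\overline{\mathrm{Stab}(a)}$, $\mathrm{J}_y\rtimes\mathrm{S}(c;\alpha)$, etc., which the paper needs later in \S\ref{Sec:centralisateursj} and in the summary table.
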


\noindent Ces r\'esultats nous permettent de d\'ecrire, \`a indice fini pr\`es, les centralisateurs des \'el\'ements 
$\phi$ qui pr\'eservent une unique fibration rationnelle, pas n\'ecessairement fibre \`a fibre, question en liaison 
avec des probl\`emes classiques d'\'equations aux diff\'erences (\S\ref{Sec:centralisateursj}). Nous verrons que 
g\'en\'eriquement ces transformations ont un centralisateur trivial, c.-\`a.d. r\'eduit aux it\'er\'es de~$\phi$. 
Ceci est r\'esum\'e dans un tableau r\'ecapitulatif au \S\ref{Sec:res}. Un corollaire imm\'ediat de cette description 
est l'\'enonc\'e suivant:

\begin{coralph}
Soit $\phi$ une transformation birationnelle du plan projectif complexe qui pr\'eserve une unique fibration
rationnelle. Le centralisateur de $\phi$ est virtuellement r\'esoluble.
\end{coralph}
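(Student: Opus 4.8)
The plan is to bootstrap from the description of centralisers of maps preserving a rational fibration \emph{fibre à fibre} (Théorème~\ref{ababelien} and the structural statements announced in \S\ref{calculcentralisateur}) to the general case, where the action on the base of the fibration need not be trivial. So let $\phi$ be a birational map preserving a \emph{unique} rational fibration $\mathcal{F}$, and let $\pi$ be the rational map realising this fibration, $\pi\colon\mathbb{P}^2\dashrightarrow\mathbb{P}^1$. Since $\phi$ preserves $\mathcal{F}$, there is an induced action $\overline{\phi}\in\mathrm{PGL}_2(\mathbb{C})=\mathrm{Aut}(\mathbb{P}^1)$ on the base, giving a homomorphism $\rho\colon\mathrm{C}(\phi)\to\mathrm{PGL}_2(\mathbb{C})$, $\psi\mapsto\overline{\psi}$: indeed, because $\mathcal{F}$ is the \emph{only} invariant rational fibration, every $\psi$ commuting with $\phi$ must also preserve $\mathcal{F}$, so $\rho$ is well defined on all of $\mathrm{C}(\phi)$. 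The kernel $K=\ker\rho$ consists exactly of the elements of $\mathrm{C}(\phi)$ acting trivially on the base, i.e.\ preserving $\mathcal{F}$ fibre à fibre.

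The first step is to handle $K$. An element of $K$ preserves $\mathcal{F}$ fibre à fibre, so it lies in some maximal abelian subgroup of the type classified in \S\ref{Sec:J0}; more to the point, since $K$ centralises $\phi$ and $K$ acts fibre-wise, the computations of \S\ref{calculcentralisateur} apply and show that $K$ is contained in (a finite extension of) such a maximal abelian group $\mathrm{Ab}$. In particular $K$ is virtually abelian, hence virtually solvable. (One must be slightly careful here: $\phi$ itself need not act trivially on the base, so $\phi$ need not lie in $K$; but $K$ is still a subgroup of $\mathrm{C}(\phi)$ whose elements all preserve $\mathcal{F}$ fibre à fibre, and that is all that is needed to invoke the fibre-wise analysis.)

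The second step is to control the quotient $\overline{\mathrm{C}(\phi)}=\rho(\mathrm{C}(\phi))\subset\mathrm{PGL}_2(\mathbb{C})$. The key point is that this image centralises $\overline{\phi}$ inside $\mathrm{PGL}_2(\mathbb{C})$: indeed $\rho$ is a homomorphism and $\overline{\phi}=\rho(\phi)$. Now the centraliser of a single element in $\mathrm{PGL}_2(\mathbb{C})$ is always solvable unless that element is trivial: if $\overline{\phi}$ is diagonalisable and $\neq\mathrm{id}$ its centraliser is a maximal torus (abelian); if $\overline{\phi}$ is parabolic its centraliser is the corresponding unipotent-by-torus Borel-type subgroup (solvable); and if $\overline{\phi}=\mathrm{id}$ then $\phi\in K$ and we are already in the fibre à fibre case treated above, where Théorème~\ref{ababelien} (for linear growth) and \cite{BlDe} (bounded growth) together with \S\ref{calculcentralisateur} give the conclusion directly. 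So in every case $\overline{\mathrm{C}(\phi)}$ is a solvable (in fact virtually abelian) subgroup of $\mathrm{PGL}_2(\mathbb{C})$.

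Finally, assemble the pieces: we have a short exact sequence
\begin{equation*}
1\longrightarrow K\longrightarrow \mathrm{C}(\phi)\longrightarrow \overline{\mathrm{C}(\phi)}\longrightarrow 1,
\end{equation*}
with $K$ virtually solvable and $\overline{\mathrm{C}(\phi)}$ solvable. An extension of a (virtually) solvable group by a (virtually) solvable group is virtually solvable, so $\mathrm{C}(\phi)$ is virtually solvable, which is the assertion. The main obstacle is not the group-theoretic bookkeeping but making Step~1 rigorous: one must check that the hypothesis of \emph{uniqueness} of the invariant fibration genuinely forces every centralising element into the fibre-preserving picture and that the structural results of \S\ref{calculcentralisateur}—stated there for $\phi$ itself preserving $\mathcal{F}$ fibre à fibre—can be applied to the subgroup $K$ even when $\phi\notin K$. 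Once that reduction is secured, the rest is the routine extension argument sketched above. One should also treat separately the degenerate case where $\phi$ is periodic (of torsion): there the hypothesis excludes nothing automatically, but a periodic element preserving a unique rational fibration still induces a finite-order element of $\mathrm{PGL}_2(\mathbb{C})$ with solvable centraliser, and the fibre-wise part is again handled by the $\S\ref{Sec:J0}$ classification, so the same exact-sequence argument goes through.
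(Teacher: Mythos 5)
Your proof follows essentially the same route as the paper: the corollary is read off from the exact sequence $1\to\mathrm{H}\to\mathrm{C}(\phi)\to\pi_2(\mathrm{C}(\phi))\to 1$ of \S\ref{Sec:centralisateursj}, with the kernel (virtually) abelian and the image contained in the centraliser of $\pi_2(\phi)$ in $\mathrm{PGL}_2(\mathbb{C})$, hence virtually abelian. There is, however, one step that would fail if taken literally. In the sub-case $\overline{\phi}=\mathrm{id}$ you invoke \cite{BlDe} for the bounded-growth situation and claim it gives the conclusion directly; but there the centraliser is $\mathrm{J}_a\rtimes\mathrm{PGL}_2(\mathbb{C})$ or $\mathrm{J}_m\rtimes\mathrm{PGL}_2(\mathbb{C})$ (lines $(\ell_1)$ and $(\ell_2)$ of the table), which is \emph{not} virtually solvable since it surjects onto the simple group $\mathrm{PGL}_2(\mathbb{C})$. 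The correct observation is that this sub-case is vacuous under your hypothesis: bounded degree growth forces $\phi$ to preserve infinitely many rational fibrations (Diller--Favre, as recalled in the introduction), contradicting uniqueness, so only the linear-growth case occurs and Théorème~\ref{ababelien} applies. Relatedly, your Step~1 claim that $K=\ker\rho$ is virtually abelian does not follow merely from the fact that its elements act fibre à fibre (subgroups of $\mathrm{PGL}_2(\mathbb{C}(y))$ can be large); the argument of \S\ref{Sec:centralisateursj} is to pick an infinite-order $\psi\in K$, use $\phi\in\mathrm{C}(\psi)$ together with the classification of \S\ref{calculcentralisateur} to pin down the form of $\phi$, and only then compute $K$ explicitly (obtaining $\mathbb{C}$, $\mathbb{C}^*$, $\mathbb{C}^*\rtimes\mathbb{Z}/2\mathbb{Z}$ or a finite group). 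You flag this yourself as the point needing care; it is precisely the content of \S\ref{Sec:centralisateursj}, so once those two repairs are made your extension argument gives the corollary.
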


\subsection*{Remerciements} Merci \`a J.-J. Loeb pour sa participation sympathique et au rapporteur pour ses 
remarques et propos judicieux.

\section{Groupe de Jonqui\`eres}\label{jojo}

\subsection{Quelques rappels sur le groupe de Cremona} 

\noindent Soit $\phi$ un \'el\'ement de $\mathrm{Bir}(\mathbb{P}^2)$. L'ensemble des {\it points d'ind\'etermination} de $\phi$ est par d\'efinition l'ensemble $$\mathrm{Ind}\,
\phi=\big\{m\in\mathbb{P}^2(\mathbb{C})\,\vert\,\phi_0(m)=\phi_1(m)=\phi_2(m)=0\big\};$$ quant \`a l'{\it ensemble exceptionnel} de $\phi$, ou encore l'ensemble des courbes 
contract\'ees par $\phi$, il est donn\'e par $$\mathrm{Exc}\,\phi=\big\{m\in\mathbb{P}^2(\mathbb{C})\,\vert\,\det\mathrm{jac}\phi_{(m)}=0\big\}.$$ On note que la transformation 
birationnelle $\phi$ induit un automorphisme de $\mathbb{P}^2(\mathbb{C})$ si et seulement si $\mathrm{Ind}\,\phi=\mathrm{Exc}\,\phi=\emptyset$. Pour les transformations 
$f_{\alpha,\beta}$ \'evoqu\'ees dans l'introduction et donn\'ees par
\begin{align*} 
&f_{\alpha,\beta}=(z(\alpha x+y):\beta y(x+z):z(x+z)), && \alpha,\,\beta\in\mathbb{C}^*.
\end{align*} 
on a 
\begin{align*}
&\mathrm{Ind}\,f_{\alpha,\beta}=\big\{(1:0:0),\,(0:1:0),\,(-1:\alpha:1)\big\}, && \mathrm{Exc}\,f_{\alpha,\beta}=\big\{z=-x\big\}\cup\big\{z=0\big\}\cup\big\{y=\alpha z\big\}.
\end{align*}

\subsection{Groupe de Jonqui\`eres} Le {\it groupe de Jonqui\`eres} est le groupe des transformations birationnelles du plan projectif complexe pr\'eservant un pinceau 
de courbes rationnelles; on le note $\mathrm{J}$. Comme deux pinceaux de courbes rationnelles sont birationnellement conjugu\'es, $\mathrm{J}$ ne d\'epend pas, \`a conjugaison 
pr\`es, du pinceau choisi. Dit autrement on peut supposer \`a conjugaison birationnelle pr\`es que $\mathrm{J}$ est, dans une carte affine $(x,y)$ de $\mathbb{P}^2(\mathbb{C})$, 
le groupe maximal des transformations birationnelles laissant la fibration $y=$ cte invariante. Une transformation $\phi$ de $\mathrm{J}$
permutant les fibres de la fibration,~$\phi$ induit un automorphisme de la base $\mathbb{P}^1(\mathbb{C})$, soit un \'el\'ement de $\mathrm{PGL}_2(\mathbb{C})$; lorsque 
$\phi$ pr\'eserve les fibres, $\phi$ agit comme une homographie dans les fibres g\'en\'eriques. Le groupe de Jonqui\`eres s'identifie donc au produit semi-direct 
$\mathrm{PGL}_2(\mathbb{C}(y))\rtimes\mathrm{PGL}_2(\mathbb{C})$; plus pr\'ecis\'ement tout $\phi$ dans~$\mathrm{J}$ s'\'ecrit 
\begin{align*}
& \phi=\left(\frac{A(y)x+B(y)}{C(y)x+D(y)},\frac{\alpha y+\beta}{\gamma y+\delta}\right), && \left[\begin{array}{cc} A & B\\ C & D\end{array}\right]\in\mathrm{PGL}_2(
\mathbb{C}[y]), && \left[\begin{array}{cc} \alpha & \beta \\ \gamma & \delta\end{array}\right]\in\mathrm{PGL}_2(\mathbb{C}).
\end{align*}
On remarque que pour un tel $\phi$ l'ensemble exceptionnel se r\'eduit \`a un nombre fini de fibres $y=$ cte et \'eventuellement la \og droite\fg\, \`a l'infini.
On d\'esigne par $\pi_2$ le morphisme de $\mathrm{J}$ dans $\mathrm{PGL}_2(\mathbb{C})$, c.-\`a.d. $\pi_2(\phi)$ est la seconde composante de $\phi$. Les \'el\'ements 
de~$\mathrm{J}$ qui pr\'eservent la fibration fibre \`a fibre forment un sous-groupe distingu\'e (noyau du morphisme $\pi_2$) que nous note\-rons~$\mathrm{J}_0\simeq
\mathrm{PGL}_2(\mathbb{C}(y))$. Si $\phi$ appartient \`a~$\mathrm{J}_0$, il s'\'ecrit $\left(\frac{A(y)x+B(y)}{C(y)x+D(y)},y\right)$ et on note $\mathrm{M}_\phi$ la 
matrice associ\'ee $\left[\begin{array}{cc} A & B \\ C & D \end{array}\right]$. L'{\it indice de Baum-Bott} (par analogie avec l'indice de Baum-Bott d'un feuilletage, 
\cite{BB, GML}) de $\phi$ est par d\'efinition $\frac{(\mathrm{tr}\,\mathrm{M}_\phi)^2}{\det\mathrm{M}_\phi}$, on le notera $\mathrm{BB}(\phi)$. Le groupe~$\mathrm{J}$
 contient bien \'evidemment des transformations \`a croissance born\'ee; on va donner un crit\`ere permettant de d\'eterminer les \'el\'ements de $\mathrm{J}_0$ qui sont 
\`a croissance lin\'eaire (\S\ref{critere}).

\subsection{Formes normales dans $\mathrm{J}_0$}\label{Sec:J0}

\noindent Soit $\phi$ une transformation birationnelle appartenant \`a $\mathrm{J}_0$; elle est, \`a conjugaison birationnelle pr\`es, de l'un des types suivants (\emph{voir} 
par exemple \cite{De2})
\begin{align*}
&\mathfrak{a}\,\,\,\,\,(x+a(y),y), && \mathfrak{b}\,\,\,\,\,(b(y)x,y),&& \mathfrak{c}\,\,\,\,\,\left(\frac{c(y)x+F(y)}{x+c(y)},y\right), 
\end{align*}

\noindent avec $a$ dans $\mathbb{C}(y)$, $b$ dans $\mathbb{C}(y)^*$ et $c$, $F$ dans $\mathbb{C}[y]$, $F$ n'\'etant pas un carr\'e (si $F$ est un carr\'e alors $\phi$ 
est conjugu\'e \`a un \'el\'ement de type $\mathfrak{b}$). En effet, soient $\mathrm{M}_\phi=\left[\begin{array}{cc}A &B \\ C & D\end{array}\right]$ un \'el\'ement de 
$\mathrm{PGL}_2( \mathbb{C}[y])$ et $$P_{\mathrm{M}_\phi}(X)=X^2 -(\mathrm{tr}\,\mathrm{M}_\phi) X+ \det\mathrm{M}_\phi$$ son polyn\^ome caract\'eristique. Ou bien 
$P_{\mathrm{M}_\phi}$ a deux racines distinctes (cas~$\mathfrak{b}$), ou bien $P_{\mathrm{M}_\phi}$ a une racine double (cas $\mathfrak{a}$), ou bien~$P_{\mathrm{M}_\phi}$ 
n'a pas de racine dans $\mathbb{C}[y]$. Dans ce dernier cas ceci signifie que $\mathrm{tr}^2\mathrm{M}_\phi-4\det\mathrm{M}_\phi$ n'est pas un carr\'e dans~$\mathbb{C}[y]$; 
on en d\'eduit que $BC\not= 0$ et que $B(y)$ s'\'ecrit $G(y)-\frac{(A(y)-D(y))^2}{4}$ o\`u $G$ d\'esigne un \'el\'ement de $\mathbb{C}[y]$ qui n'est pas un carr\'e. Quitte
 \`a conjuguer $\phi$ par $(C(y)x,y)$, on peut supposer que $C= 1$ et que~$A$, $B$, $D$ appartiennent \`a $\mathbb{C}[y]$. Ensuite en conjuguant $\phi$ par $\left(x+
\frac{D(y)-A(y)}{2}, y\right)$ on se ram\`ene \`a $A= D$ et $B$ n'est pas un carr\'e; autrement dit~$\mathrm{M}_\phi=\left[\begin{array}{cc}A & B\\ 1 & A\end{array}\right]$ 
avec~$A$, $B$ dans~$\mathbb{C}[y]$ et $B$ n'est pas un carr\'e. 

\noindent Les sous-groupes ab\'eliens maximaux non finis de $\mathrm{J}_0$ sont (\`a conjugaison pr\`es dans $\mathrm{J}_0$) de trois types
\begin{align*}
&\mathrm{J}_a=\big\{(x+a(y),y)\,\big\vert\, a\in\mathbb{C}(y)\big\}, &&\mathrm{J}_m=\big\{(b(y)x,y)\,\big\vert\, b\in\mathbb{C}(y)^*\big\}
\end{align*}
et les groupes
\begin{align*}
\mathrm{J}_F=\left\{(x,y),\,\left(\frac{c(y)x+F(y)}{x+c(y)},y\right)\,\Big\vert\, c\in\mathbb{C}(y)\right\}
\end{align*}
o\`u $F$ d\'esigne un \'el\'ement de $\mathbb{C}[y]$ qui n'est pas un carr\'e (\cite{De2}).
En fait en conjuguant par une transformation~$(a(y)x,y)$ ad-hoc on peut supposer que $F$ est un polyn\^ome ayant toutes ses racines simples, ce que nous faisons dans la suite. 

\begin{rem}
Un \'el\'ement de type $\mathfrak{a}$ non trivial est conjugu\'e (via une transformation de type $\mathfrak{b}$) \`a $(x+~1,y)$. Les trois types de groupes 
pr\'ec\'edents se distinguent par la nature des points fixes de leurs \'el\'ements; avec des notations \'evidentes $\mathrm{Fix}\,\mathrm{J}_a=\{(x=\infty,y)\}$,
$\mathrm{Fix}\,\mathrm{J}_m=\{(x=\infty,y)\}\cup\{(x=0,y)\}$, $\mathrm{Fix}\,\mathrm{J}_F=\{(x,y)\,\vert\, x^2=F(y)\}$, ces \'egalit\'es \'etant \'ecrites 
dans $\mathbb{P}^1\times\mathbb{P}^1$.
\end{rem}

Par suite si $\phi$ est un \'el\'ement de $\mathrm{J}_0$ et si~$\mathrm{Ab}(\phi)$ d\'esigne le sous-groupe ab\'elien maximal non fini de $\mathrm{J}_0$ contenant $\phi$ 
alors, \`a conjugaison pr\`es, $\mathrm{Ab}(\phi)$ est $\mathrm{J}_a$, $\mathrm{J}_m$ ou $\mathrm{J}_F$. Plus pr\'ecis\'ement si $\phi$ est de type~$\mathfrak{a}$ (resp. 
$\mathfrak{b}$, resp. $\mathfrak{c}$), alors $\mathrm{Ab}(\phi)=\mathrm{J}_a$ (resp.~$\mathrm{Ab}(\phi)=\mathrm{J}_m$, resp. $\mathrm{Ab}(\phi)=\mathrm{J}_F$).

\section{Croissance des degr\'es dans $\mathrm{J}_0$}\label{critere}

\noindent Nous allons dans ce paragraphe donner un crit\`ere permettant d'affirmer qu'un \'el\'ement $\phi$ de $\mathrm{J}_0$ est \`a croissance lin\'eaire.

\medskip

\noindent Comme on l'a rappel\'e pr\'ec\'edemment $\phi$ est, \`a conjugaison birationnelle pr\`es, de l'un des types $\mathfrak{a}$, $\mathfrak{b}$ ou $\mathfrak{c}$. 

\begin{rem}
La transformation $\phi$ est une involution si et seulement si l'indice de Baum-Bott de $\phi$ est nul.
\end{rem}

\begin{rem}
Posons $$\Xi=\Big\{\alpha+\frac{1}{\alpha}+2\,\vert\,\alpha\in\mathbb{C}(y)^*\Big\}.$$
Soient $\phi\in\mathrm{J}_0$ et $\mathrm{BB}(\phi)$ son indice de Baum-Bott. Si $\phi$ n'est pas une
involution, c.-\`a.d. $\mathrm{BB}(\phi)\not=0$, on a
\begin{itemize}
\item $\phi$ est du type $\mathfrak{a}$ si et seulement si $\mathrm{BB}(\phi)=4$;

\item $\phi\not=\mathrm{id}$ est du type $\mathfrak{b}$ si et seulement si $\mathrm{BB}(\phi)$ appartient \`a $\Xi$;

\item $\phi$ est du type $\mathfrak{c}$ si et seulement si $\mathrm{BB}(\phi)$ n'est pas dans $\Xi$.
\end{itemize}
\end{rem}

\smallskip

\noindent Si $\phi$ est de type $\mathfrak{a}$, on a $\deg \phi^n=\deg \phi$. Lorsque $\phi=(a(y)x,y)$ est de type $\mathfrak{b}$, alors
\begin{align*}
&\mathrm{BB}(\phi)=\frac{(a(y)+1)^2}{a(y)},&& \phi^n=(a(y)^nx,y)
\end{align*}

\noindent et on a l'alternative suivante: 
\begin{itemize}
\item ou bien $a$ est une constante auquel cas l'indice de Baum-Bott de $\phi$ est une constante et $\deg \phi^n=\deg \phi=1;$ 

\item ou bien $a$ appartient \`a $\mathbb{C}(y)\setminus\mathbb{C}$ et 
$\mathrm{BB}(\phi)$ appartient \`a $\mathbb{C}(y)\setminus\mathbb{C}$, alors $\deg \phi^n=n\deg  \phi-n+1$.
\end{itemize}

\noindent Toute transformation $\phi=\Big(\frac{c(y)x+F(y)}{x+c(y)},y\Big)$ de type $\mathfrak{c}$, c.-\`a.d. appartenant \`a un $\mathrm{J}_F$, est 
conjugu\'ee dans $\mathrm{PGL}_2\big(\mathbb{C}(y)
+\sqrt{F}\mathbb{C}(y)\big)$ \`a $\psi=\left(\frac{c(y)+\sqrt{F(y)}}{c(y)-\sqrt{F(y)}}x,y\right)$. Puisque~$\mathbb{C}(y)+\sqrt{F}\mathbb{C}(y)$ 
s'identifie au corps des fonctions m\'eromorphes sur la courbe $\mathcal{C}=\{x^2=F(y)\}$, il y a sur ce corps une notion de degr\'e (nombre de 
points dans la fibre g\'en\'erique). Par exemple le degr\'e de $F$ (c.-\`a.d. le degr\'e de $x_{\vert\mathcal{C}}$) est le degr\'e ordinaire de 
$F$, celui de $y$ est $2$. On v\'erifie que la croissance des degr\'es de $\phi$ et $\psi$ sont identiques. La transformation $\psi$ \'etant du type 
$\mathfrak{b}$ sur $\mathcal{C}$, elle est \`a croissance born\'ee si et seulement si $\frac{c(y)+\sqrt{F(y)}}{c(y)-\sqrt{F(y)}}$ est une constante autrement dit si et seulement 
si $c$ est nul (puisque $F$ n'est pas un carr\'e). Par ailleurs $\mathrm{BB}(\phi)=\frac{4c(y)^2}{c(y)^2-F(y)}$; il appartient \`a $\mathbb{C}(y)\setminus\mathbb{C}$ 
si et seulement si $c$ est non nul (toujours parce que $F$ n'est pas un carr\'e).

\medskip

\noindent Ce qui pr\'ec\`ede nous permet d'\'enoncer le:

\begin{thm}\label{cri}
Soit $\phi$ un \'el\'ement de $\mathrm{J}_0$. La transformation $\phi$ est \`a croissance lin\'eaire si et seulement si l'indice de Baum-Bott de $\phi$ appartient 
\`a $\mathbb{C}(y)\setminus\mathbb{C}$.
\end{thm}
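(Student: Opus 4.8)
The plan is to argue via the normal forms of \S\ref{Sec:J0}: up to conjugation inside $\mathrm{J}_0$ the map $\phi$ is of one of the types $\mathfrak{a}$, $\mathfrak{b}$, $\mathfrak{c}$, and it is enough to check the equivalence in each of these cases. This reduction does not affect the statement: the growth type of $(\deg\phi^n)_n$ is a birational conjugacy invariant, whereas $\mathrm{BB}(\phi)=\frac{(\mathrm{tr}\,\mathrm{M}_\phi)^2}{\det\mathrm{M}_\phi}$ is the usual scale--invariant conjugacy invariant of the class $\mathrm{M}_\phi\in\mathrm{PGL}_2(\mathbb{C}(y))$, hence is literally unchanged under conjugation in $\mathrm{J}_0$; the only other move appearing in \S\ref{Sec:J0} is a homographic substitution in the base variable $y$, which replaces $\mathrm{BB}(\phi)$ by its composition with a homography of $y$ and therefore preserves the dichotomy ``$\mathbb{C}$ versus $\mathbb{C}(y)\setminus\mathbb{C}$''. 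So we may assume $\phi$ is in normal form.

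Types $\mathfrak{a}$ and $\mathfrak{b}$ are handled by a direct computation. If $\phi=(x+a(y),y)$, then $\mathrm{M}_\phi$ is unipotent, so $\mathrm{BB}(\phi)=4\in\mathbb{C}$, and $\deg\phi^n=\deg\phi$ is bounded: both sides fail. If $\phi=(b(y)x,y)$, then $\phi^n=(b(y)^nx,y)$ and $\mathrm{BB}(\phi)=\frac{(b(y)+1)^2}{b(y)}$; the equation $\frac{(b+1)^2}{b}=\kappa$ is the quadratic $b^2-(\kappa-2)b+1=0$ in $b$, which for $\kappa\in\mathbb{C}$ has constant coefficients, forcing $b\in\mathbb{C}$ (and conversely). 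So $\mathrm{BB}(\phi)\in\mathbb{C}(y)\setminus\mathbb{C}$ exactly when $b\notin\mathbb{C}$. When $b\in\mathbb{C}^*$ the map $\phi$ is linear and $\deg\phi^n=1$; when $b\in\mathbb{C}(y)\setminus\mathbb{C}$, writing $b=p/q$ with $p,q\in\mathbb{C}[y]$ coprime, the powers $p^n$, $q^n$ stay coprime, so $b(y)^n$ is already reduced, no degree drop occurs along the orbit, and $\deg\phi^n=n\deg\phi-n+1$ grows linearly. This settles the equivalence for types $\mathfrak{a}$ and $\mathfrak{b}$.

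Case $\mathfrak{c}$ is the core. Write $\phi=\bigl(\frac{c(y)x+F(y)}{x+c(y)},y\bigr)$ with $F\in\mathbb{C}[y]$ not a square; then $\mathrm{M}_\phi=\left[\begin{smallmatrix}c & F\\ 1 & c\end{smallmatrix}\right]$ has eigenvalues $c\pm\sqrt{F}$ in the quadratic extension $\mathbb{C}(y)[\sqrt{F}]$, the function field of $\mathcal{C}=\{x^2=F(y)\}$. Diagonalizing, $\phi$ is conjugate to $\psi=\bigl(\mu(y)x,y\bigr)$ with $\mu=\frac{c+\sqrt{F}}{c-\sqrt{F}}$, of the same shape as a type $\mathfrak{b}$ map but over $\mathcal{C}$; moreover $(\deg\phi^n)_n$ and $(\deg\psi^n)_n$ have the same growth (the degree on $\mathbb{C}(\mathcal{C})$ being the number of points of a generic fibre, $y$ having degree $2$). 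Running the type $\mathfrak{b}$ argument over $\mathcal{C}$, $\psi$ --- hence $\phi$ --- has linear growth iff $\mu\notin\mathbb{C}$. Now $\mu\in\mathbb{C}$ gives $(1+\mu)\sqrt{F}=(\mu-1)c$; since $\sqrt{F}\notin\mathbb{C}(y)$ this forces $\mu=-1$ and then $c=0$, and conversely $c=0$ yields $\mu=-1$. On the other hand $\mathrm{BB}(\phi)=\frac{4c^2}{c^2-F}$: if $c\neq0$ and $\frac{4c^2}{c^2-F}=k\in\mathbb{C}$, then $k=0$ is impossible, while $k\neq0$ gives $F=\frac{k-4}{k}c^2$, a square in $\mathbb{C}[y]$, contradicting the choice of $F$. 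Thus $\mathrm{BB}(\phi)\in\mathbb{C}(y)\setminus\mathbb{C}$ iff $c\neq0$, and we conclude $\phi$ has linear growth $\iff c\neq0\iff\mathrm{BB}(\phi)\in\mathbb{C}(y)\setminus\mathbb{C}$.

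The step I expect to be the main obstacle is the passage to the ramified double cover $\mathcal{C}$ in case $\mathfrak{c}$: one must make sure that replacing $\phi$ by $\psi$ on $\mathcal{C}$ does not distort the degree growth, i.e. that $(\deg\phi^n)_n$ and $(\deg\psi^n)_n$ are comparable --- together with, in case $\mathfrak{b}$, the (easier) check that the iterates $b(y)^n$ never develop common factors, so that the growth is genuinely linear in $n$ and not merely unbounded. Everything else reduces to elementary algebra with $2\times2$ matrices over $\mathbb{C}(y)$.
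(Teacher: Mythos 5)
Your proof is correct and follows essentially the same route as the paper: reduction to the normal forms $\mathfrak{a}$, $\mathfrak{b}$, $\mathfrak{c}$ of \S\ref{Sec:J0}, direct computation of $\mathrm{BB}$ and of the degree growth in cases $\mathfrak{a}$ and $\mathfrak{b}$, and in case $\mathfrak{c}$ the diagonalization of $\mathrm{M}_\phi$ over the double cover $\mathcal{C}=\{x^2=F(y)\}$ together with the identity $\mathrm{BB}(\phi)=\frac{4c^2}{c^2-F}$ and the fact that $F$ is not a square. The extra details you supply (invariance of $\mathrm{BB}$ under the conjugations used, the quadratic argument showing $\mathrm{BB}(\phi)\in\mathbb{C}$ forces $b\in\mathbb{C}$, the coprimality of $p^n,q^n$) only make explicit steps the paper leaves implicit.
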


\begin{rem}
Ceci permet d'obtenir un crit\`ere pour les \'el\'ements $\phi$ tels que $\pi_2(\phi)$ soit p\'eriodique de p\'eriode $k$: un tel $\phi$ est \`a croissance 
lin\'eaire  (resp. born\'ee) si et seulement si $\mathrm{BB}(\phi^k)$ appartient \`a $\mathbb{C}(y)\setminus\mathbb{C}$ (resp. $\mathrm{BB}(\phi^k)$ appartient 
\`a~$\mathbb{C}$).
\end{rem}

\begin{cor}
Soit $\phi$ un \'el\'ement de $\mathrm{J}_0$. La transformation $\phi$ est \`a croissance born\'ee si et seulement si l'indice de Baum-Bott de $\phi$ appartient 
\`a $\mathbb{C}$; dans ce cas $\phi$ est soit de type $\mathfrak{a}$, soit de type $\mathfrak{b}$ lin\'eaire $((bx,y)$ avec $b$ dans $\mathbb{C}^*)$, soit une
 involution $\left(\frac{F(y)}{x},y\right)$ de type $\mathfrak{c}$.
\end{cor}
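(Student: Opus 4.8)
The plan is to deduce the corollary directly from Theorem~\ref{cri} together with the normal forms of \S\ref{Sec:J0} and the degree computations carried out just above.

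First I would record that for every $\phi\in\mathrm{J}_0$ the Baum--Bott index $\mathrm{BB}(\phi)=\frac{(\mathrm{tr}\,\mathrm{M}_\phi)^2}{\det\mathrm{M}_\phi}$ is, by its very definition, an element of $\mathbb{C}(y)$. Combined with the fact — established in the case-by-case discussion preceding Theorem~\ref{cri} — that an element of $\mathrm{J}_0$ has either bounded or linear degree growth, Theorem~\ref{cri} yields the chain of equivalences: $\phi$ is of bounded growth $\iff$ $\phi$ is not of linear growth $\iff$ $\mathrm{BB}(\phi)\notin\mathbb{C}(y)\setminus\mathbb{C}$ $\iff$ $\mathrm{BB}(\phi)\in\mathbb{C}$. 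This gives the first assertion.

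For the classification in the bounded case I would run through the three normal forms $\mathfrak{a}$, $\mathfrak{b}$, $\mathfrak{c}$. A map of type $\mathfrak{a}$, namely $\phi=(x+a(y),y)$, satisfies $\deg\phi^n=\deg\phi$, hence is always of bounded growth, and indeed $\mathrm{BB}(\phi)=4$. A map of type $\mathfrak{b}$, namely $\phi=(b(y)x,y)$, has $\phi^n=(b(y)^nx,y)$, so it is of bounded growth precisely when $b$ is a constant; one then recovers the linear maps $(bx,y)$ with $b\in\mathbb{C}^*$, and conversely each of these is of bounded growth. Finally a map of type $\mathfrak{c}$ is conjugate, over the field $\mathbb{C}(y)+\sqrt{F}\,\mathbb{C}(y)$ of meromorphic functions on $\mathcal{C}=\{x^2=F(y)\}$, to the ``type $\mathfrak{b}$'' map $\psi=\bigl(\tfrac{c(y)+\sqrt{F(y)}}{c(y)-\sqrt{F(y)}}\,x,\,y\bigr)$, which has the same growth; this map is of bounded growth iff its multiplier is constant, and since $F$ is not a square this forces $c=0$, that is $\phi=\bigl(\tfrac{F(y)}{x},y\bigr)$, an involution. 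This is consistent with $\mathrm{BB}(\phi)=\frac{4c(y)^2}{c(y)^2-F(y)}$ being constant iff $c=0$.

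The main obstacle will be the type-$\mathfrak{c}$ step: one has to justify that passing to the quadratic extension $\mathbb{C}(y)+\sqrt{F}\,\mathbb{C}(y)$ and to the curve $\mathcal{C}$ does not alter the growth type, so that boundedness on $\mathcal{C}$ transfers back to boundedness on $\mathbb{P}^2(\mathbb{C})$, and that the identity $\frac{c+\sqrt F}{c-\sqrt F}\in\mathbb{C}$ forces $c=0$ as soon as $\sqrt F\notin\mathbb{C}(y)$ (the only constant value then being $-1$, which gives $c=0$). Both points are already recorded in the discussion preceding Theorem~\ref{cri}, so once they are in hand the corollary follows by assembling the three cases, with no genuinely new computation required.
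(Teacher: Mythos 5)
Your proposal is correct and follows exactly the route the paper intends: the corollary is the immediate consequence of Theorem~\ref{cri} together with the case-by-case discussion of types $\mathfrak{a}$, $\mathfrak{b}$, $\mathfrak{c}$ (including the conjugation over $\mathbb{C}(y)+\sqrt{F}\,\mathbb{C}(y)$ and the formula $\mathrm{BB}(\phi)=\frac{4c(y)^2}{c(y)^2-F(y)}$) that precedes its statement. The points you flag as potential obstacles are indeed already settled in that discussion, so nothing further is needed.
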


\section{Centralisateurs des \'el\'ements de $\mathrm{J}_0$}\label{calculcentralisateur}

\subsection{Le centralisateur d'une transformation non p\'eriodique de $\mathrm{J}_0$ est contenu dans $\mathrm{J}$}

\begin{pro}\label{carac}
Soit $\phi$ un \'el\'ement de $\mathrm{J}_0$. Alors 
\begin{itemize}
\item ou bien $\mathrm{C}(\phi)$ est contenu dans $\mathrm{J};$

\item ou bien $\phi$ est p\'eriodique.
\end{itemize}
\end{pro}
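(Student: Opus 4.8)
The plan is to establish that a non-periodic $\phi\in\mathrm{J}_0$ preserves fibre \`a fibre \emph{exactly one} rational fibration, namely the reference fibration $\mathcal{F}=\{y=\mathrm{cte}\}$; granting this, the commutation relation will immediately force $\mathrm{C}(\phi)$ into $\mathrm{J}$.

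First I would recast the problem in field-theoretic terms. To a rational fibration $\mathcal{G}$ one associates the subfield $\mathbb{C}(\mathcal{G})\subset\mathbb{C}(x,y)$ of rational functions constant on the fibres of $\mathcal{G}$; it has transcendence degree one over $\mathbb{C}$, and it is algebraically closed in $\mathbb{C}(x,y)$ since the generic fibre is irreducible. An element of $\mathrm{J}_0$ preserves $\mathcal{G}$ fibre \`a fibre if and only if it fixes $\mathbb{C}(\mathcal{G})$ pointwise. As $\phi$ fixes $y$, it fixes $\mathbb{C}(\mathcal{F})=\mathbb{C}(y)$; hence the sought uniqueness is equivalent to the equality $\mathbb{C}(x,y)^{\phi}:=\big\{h\in\mathbb{C}(x,y)\,\vert\,h\circ\phi=h\big\}=\mathbb{C}(y)$. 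Indeed, if this holds, then any fibration $\mathcal{G}$ preserved fibre \`a fibre satisfies $\mathbb{C}(\mathcal{G})\subseteq\mathbb{C}(y)$; both fields have transcendence degree one and $\mathbb{C}(\mathcal{G})$ is algebraically closed in $\mathbb{C}(x,y)$, so $\mathbb{C}(\mathcal{G})=\mathbb{C}(y)$ and $\mathcal{G}=\mathcal{F}$.

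To compute this fixed field, recall that $\phi\mapsto\mathrm{M}_\phi$ identifies $\mathrm{J}_0$ with $\mathrm{PGL}_2(\mathbb{C}(y))$, so $\phi$ non-periodic means $\mathrm{M}_\phi$ has infinite order, i.e. $\phi$ acts on $\mathbb{C}(x,y)=\mathbb{C}(y)(x)$ as a $\mathbb{C}(y)$-automorphism of infinite order. Put $K=\mathbb{C}(y)$ and $L=\mathbb{C}(y)(x)^{\phi}$. If $L\ne K$, L\"uroth's theorem gives $L=K(t)$ with $t\notin K$, whence $\mathbb{C}(y)(x)/K(t)$ is a finite extension; but $\phi$ then lies in $\mathrm{Aut}\big(\mathbb{C}(y)(x)/K(t)\big)$, a group of order at most $[\mathbb{C}(y)(x):K(t)]$, contradicting that $\phi$ has infinite order. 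Therefore $L=\mathbb{C}(y)$. (Alternatively one can argue via the normal forms $\mathfrak{a},\mathfrak{b},\mathfrak{c}$ of \S\ref{Sec:J0}: a rational function invariant under $x\mapsto x+1$, or under $x\mapsto b(y)x$ with $b(y)$ not a constant root of unity, necessarily lies in $\mathbb{C}(y)$; the type $\mathfrak{c}$ case reduces to type $\mathfrak{b}$ after passing to the curve $\{x^2=F(y)\}$ as in \S\ref{critere}, since a non-periodic $\phi$ of type $\mathfrak{c}$ has $c\ne 0$.)

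Finally, assume $\phi$ non-periodic and take $\psi\in\mathrm{C}(\phi)$. Since $\phi$ fixes each fibre $\mathcal{F}_c$ and $\psi\phi\psi^{-1}=\phi$, the map $\phi$ also fixes each $\psi(\mathcal{F}_c)$, that is, $\phi$ preserves $\psi(\mathcal{F})$ fibre \`a fibre; moreover $\psi(\mathcal{F})$ is again a rational fibration because $\psi$ is birational. By the uniqueness just proved, $\psi(\mathcal{F})=\mathcal{F}$, so $\psi$ preserves the pencil $\{y=\mathrm{cte}\}$, which is precisely to say $\psi\in\mathrm{J}$. The heart of the matter — and the only step requiring real work — is the vanishing $\mathbb{C}(x,y)^{\phi}=\mathbb{C}(y)$; the reduction to it and the concluding conjugation argument are purely formal. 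One should nonetheless be slightly careful when checking that ``preserved fibre \`a fibre'' is faithfully encoded by the algebraic identity $h\circ\phi=h$ (indeterminacy points, behaviour along the line at infinity), but this is routine.
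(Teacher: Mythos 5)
Your proof is correct, but it takes a genuinely different route from the paper's. The paper's argument is a direct two-step observation: writing $\varphi=(P,Q)\in\mathrm{C}(\phi)$ and comparing the second components of $\varphi\phi=\phi\varphi$ gives $Q\circ\phi=Q$ immediately; if $\varphi\notin\mathrm{J}$ then $Q\notin\mathbb{C}(y)$, so $\phi$ preserves fibre \`a fibre a second fibration $Q=$ cte, and periodicity follows because the finite, uniformly bounded intersections of generic fibres $y=$ cte and $Q=$ cte are $\phi$-invariant, forcing the Moebius transformation $\phi_{\vert y=c}$ to permute finite sets of bounded cardinality and hence to have bounded order. You prove the same dichotomy in its contrapositive, algebraic form: for $\phi$ non-p\'eriodique the fixed field $\mathbb{C}(x,y)^{\phi}$ equals $\mathbb{C}(y)$, via L\"uroth and the finiteness of $\mathrm{Aut}\big(K(x)/K(t)\big)$ for a finite extension --- essentially the Galois-theoretic avatar of the paper's intersection count. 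Your route buys a clean uniqueness statement (a non-periodic element of $\mathrm{J}_0$ preserves exactly one fibration fibre \`a fibre) that does not depend on the genus-$0$ hypothesis nor on any explicit bound on intersection numbers; the cost is invoking L\"uroth where the paper needs nothing beyond counting intersection points. Two small remarks: your last paragraph can be shortened, since the commutation directly gives $\pi_2\circ\psi\in\mathbb{C}(x,y)^{\phi}=\mathbb{C}(y)$, which is precisely the condition $\psi\in\mathrm{J}$, with no need to pass through the image fibration $\psi(\mathcal{F})$; and the claimed \emph{equivalence} between uniqueness of the invariant fibration and $\mathbb{C}(x,y)^{\phi}=\mathbb{C}(y)$ is an overstatement (a larger fixed field need not come from a fibration), but you only use the implication you actually prove, so this is harmless.
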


\begin{proof}
Soit $\phi=(\psi(x,y),y)$ une transformation du groupe de Jonqui\`eres qui pr\'eserve la fibra\-tion~$y=$ cte fibre \`a fibre, $\psi\in\mathrm{PGL}_2(\mathbb{C}(y))$.

\noindent Soit $\varphi=(P(x,y),Q(x,y))$ une transformation rationnelle qui commute \`a $\phi$. Si $\varphi$ n'est pas dans $\mathrm{J}$, alors $Q=$ cte est
 une fibration invariante fibre \`a fibre par $\phi$ diff\'erente de $y=$cte. Ainsi $\phi$ poss\`ede deux fibrations distinctes invariantes fibre \`a  fibre 
et est donc p\'eriodique.
En effet les intersections des fibres $y=$ cte et $Q=$ cte g\'en\'eriques sont de cardinal fini, uniform\'ement born\'e; or ces intersections sont invariantes 
par $\phi$ d'o\`u le 
r\'esultat.
\end{proof}

\noindent Dans la suite de cette section nous d\'emontrons le Th\'eor\`eme \ref{ababelien} qui affirme que le centralisateur $\mathrm{C}(\phi)$ d'un \'el\'ement 
$\phi$ de~$\mathrm{J}_0$ \`a croissance lin\'eaire est une extension finie de $\mathrm{Ab}(\phi)$.

\subsection{Cas des transformations de $\mathrm{J}_a$} 

\noindent Soit $\phi=(x+a(y),y)$ un \'el\'ement non trivial de $\mathrm{J}_a$, c.-\`a.d. $a\not=0$; \`a conjugaison pr\`es par $(a(y)x,y)$ on peut supposer 
que $\phi=(x+1,y)$. Le groupe $\mathrm{C}(\phi)$ se d\'eduit alors de l'\'enonc\'e suivant 
(\cite{BlDe}).

\begin{pro}\label{centtypea}
Le centralisateur de $\phi=(x+1,y)$ est $$\big\{\left(x+b(y),\nu(y)\right)\,\big\vert\,b\in\mathbb{C}(y),\,\nu\in\mathrm{PGL}_2(\mathbb{C})\big\}\simeq
\mathrm{J}_a\rtimes\mathrm{PGL}_2(\mathbb{C}).$$
\end{pro}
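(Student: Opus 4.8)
The plan is to take an arbitrary $\varphi=(P(x,y),Q(x,y))$ commuting with $\phi=(x+1,y)$ and show it has the claimed form, then check the reverse inclusion (which is immediate). By Proposition \ref{carac}, since $\phi=(x+1,y)$ is visibly not periodic, $\varphi$ lies in $\mathrm{J}$, so $Q=Q(y)$ depends only on $y$ and $Q\in\mathrm{PGL}_2(\mathbb{C})$; thus $\varphi=\bigl(P(x,y),\nu(y)\bigr)$ with $P(\cdot,y)$ a homography in $x$ with coefficients in $\mathbb{C}(y)$, say $P(x,y)=\frac{A(y)x+B(y)}{C(y)x+D(y)}$. Writing out $\varphi\phi=\phi\varphi$ gives, on the first coordinate, $P(x+1,y)=P(x,y)+1$ (and on the second coordinate $\nu(y)=\nu(y)$, no constraint). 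So the heart of the matter is: which homographies $x\mapsto P(x,y)$ over the field $\mathbb{C}(y)$ commute with the translation $x\mapsto x+1$?

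First I would solve this functional equation fibrewise. Fix a generic $y$; an element of $\mathrm{PGL}_2(\mathbb{C}(y))$ commuting with the unipotent translation $x\mapsto x+1$ must itself be unipotent with the same fixed point. Concretely, $P(x+1,y)=P(x,y)+1$ forces $C(y)\equiv 0$ (otherwise $P$ has a pole at some finite $x=-D/C$, but then $P(x+1,y)=P(x,y)+1$ would move that pole, contradiction — or argue via the matrix identity $\left[\begin{smallmatrix}A&B\\mathbb{C}&D\end{smallmatrix}\right]\left[\begin{smallmatrix}1&1\\0&1\end{smallmatrix}\right]=\left[\begin{smallmatrix}1&1\\0&1\end{smallmatrix}\right]\left[\begin{smallmatrix}A&B\\mathbb{C}&D\end{smallmatrix}\right]$ in $\mathrm{PGL}_2(\mathbb{C}(y))$, which yields $C=0$, $A=D$, hence $P(x,y)=x+B(y)/A(y)$). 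Setting $b(y)=B(y)/A(y)\in\mathbb{C}(y)$ gives $\varphi=(x+b(y),\nu(y))$, which is exactly the asserted description.

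For the reverse inclusion one checks directly that any $(x+b(y),\nu(y))$ commutes with $(x+1,y)$: both compositions give $(x+1+b(\nu(y)),\nu(y))$ versus $(x+b(y)+1,\nu(y))$ — wait, one must be careful with the order of substitution. Composing $\phi$ then $\varphi$: $\varphi(\phi(x,y))=\varphi(x+1,y)=(x+1+b(y),\nu(y))$; composing $\varphi$ then $\phi$: $\phi(\varphi(x,y))=\phi(x+b(y),\nu(y))=(x+b(y)+1,\nu(y))$. These agree, so every such pair commutes. Finally, the map $(x+b(y),\nu(y))\mapsto(b,\nu)$ identifies the centraliser with the semidirect product $\mathrm{J}_a\rtimes\mathrm{PGL}_2(\mathbb{C})$, where $\nu\in\mathrm{PGL}_2(\mathbb{C})$ acts on $b\in\mathbb{C}(y)$ by $b(y)\mapsto b(\nu(y))$; one verifies the cocycle condition by computing a triple composition. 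The main obstacle is purely bookkeeping: handling the homography coefficients over $\mathbb{C}(y)$ and the nontrivial action in the semidirect product carefully; there is no essential difficulty since, as the authors note, the statement is already in \cite{BlDe}.
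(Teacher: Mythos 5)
Your proof is correct and follows essentially the same route as the paper: Proposition \ref{carac} reduces everything to the functional equation $P(x+1,y)=P(x,y)+1$ for a homography $P$ over $\mathbb{C}(y)$, which you then solve. The only (cosmetic) difference is in that last step --- you kill the denominator by a pole/matrix argument in $\mathrm{PGL}_2(\mathbb{C}(y))$, whereas the paper differentiates with respect to $x$ to see that $P$ is affine in $x$ and then that the linear coefficient is $1$; both are valid.
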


\begin{proof}
Comme $\phi$ est non p\'eriodique, la Proposition~\ref{carac} assure que toute transformation $\psi$ qui commute \`a $\phi$ est de la forme $(\psi_1(x,y),
\nu(y))$ avec~$\nu$ dans $\mathrm{PGL}_2(\mathbb{C})$. En \'ecrivant $\phi \psi=\psi\phi$ on obtient: $\psi_1(x+1,y)=\psi_1(x,y)+1$. Ceci conduit \`a 
$\frac{\partial \psi_1}{\partial x}(x+1,y)=\frac{\partial \psi_1}{\partial x}(x,y);$ on en d\'eduit que $\frac{\partial \psi_1}{\partial x}$ est une 
fonction de~$y$, c.-\`a.d. $\psi_1(x,y)=A(y)x+B(y)$. En r\'e\'ecrivant $\psi_1(x+1,y)=\psi_1(x,y)+1$ on obtient $A=1$. Ainsi~$\psi$ est du type $(x+B(y),
\nu(y))$ avec~$B$ dans $\mathbb{C}(y)$ et~$\nu$ dans~$\mathrm{PGL}_2(\mathbb{C})$.
\end{proof}

\begin{rem}
Le centralisateur d'un \'el\'ement non trivial $(x+b(y),y)$ est donc birationnellement conjugu\'e \`a $\mathrm{J}_a\rtimes\mathrm{PGL}_2(\mathbb{C})$, la 
conjugaison ayant lieu dans le groupe de Jonqui\`eres.
\end{rem}

\subsection{Cas des transformations de $\mathrm{J}_m$} On s'int\'eresse maintenant aux \'el\'ements du groupe $\mathrm{J}_m$. Si $a\in\mathbb{C}(y)$ 
est non constant, on note $\mathrm{stab}(a)$ le sous-groupe fini de $\mathrm{PGL}_2(\mathbb{C})$ constitu\'e des \'el\'ements  laissant $a$ invariant: 
$$\mathrm{stab}(a)=\big\{\nu\in\mathrm{PGL}_2(\mathbb{C})\,\vert\,a(\nu(y))=a(y)\big\}.$$ 
La finitude de $\mathrm{stab}(a)$ provient du fait suivant: les fibres de $a$ qui sont finies, de cardinal inf\'erieur ou \'egal au degr\'e de $a$, 
sont invariantes par
les \'el\'ements de $\mathrm{stab}(a)$. 
On introduit le sous-groupe $$\mathrm{Stab}(a)=\big\{\nu\in\mathrm{PGL}_2(\mathbb{C})\,\vert\,a(\nu(y))=a(y)^{\pm 1}\big\}.$$ On remarque que $\mathrm{stab}(a)$ 
est distingu\'e dans $\mathrm{Stab}(a)$. 

\begin{eg}
Si $k$ est un entier et $a(y)=y^k$, alors $\mathrm{stab}(a)=\big\{\omega^ky\,\vert\,\omega^k=1\big\}$ et $\mathrm{Stab}(a)$ est le groupe $\big\langle\frac{1}{y},
\,\omega^ky\,\vert\,\omega^k=1\big\rangle$.
\end{eg}

\noindent On note $\overline{\mathrm{stab}(a)}$ le groupe lin\'eaire $$\overline{\mathrm{stab}(a)}=\big\{(x,\nu(y))\,\vert\,\nu\in\mathrm{stab}(a)
\big\}.$$ Enfin par d\'efinition le groupe $\overline{\mathrm{Stab}(a)}$ est engendr\'e par $\overline{\mathrm{stab}(a)}$ et les \'el\'ements du type 
$\left(\frac{1}{x},\nu(y)\right)$, $\nu$ appartenant \`a~$\mathrm{Stab}(a)\setminus\mathrm{stab}(a)$.

\begin{pro}\label{centtypeb}
Soit $\phi=(a(y)x,y)$ un \'el\'ement non p\'eriodique de $\mathrm{J}_m$. 

\noindent Si $\phi$ est \`a croissance born\'ee, c.-\`a.d. si $a$ est une constante, le centralisateur de $\phi$ est $$\big\{\big(b(y)x,\nu(y)\big)
\,\big\vert\, b\in\mathbb{C}(y)^*, \, \nu\in\mathrm{PGL}_2(\mathbb{C})\big\}.$$ 

\noindent Si $\phi$ est \`a croissance lin\'eaire, alors $\mathrm{C}(\phi)=\mathrm{J}_m\rtimes\overline{\mathrm{Stab}(a)}$.
\end{pro}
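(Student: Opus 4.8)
The plan is to treat the two cases separately, using Proposition~\ref{carac} as the starting point in both. Since $\phi = (a(y)x, y)$ is non-periodic, any $\psi \in \mathrm{C}(\phi)$ lies in $\mathrm{J}$, so $\psi = \left(\frac{A(y)x+B(y)}{C(y)x+D(y)}, \nu(y)\right)$ with $\nu \in \mathrm{PGL}_2(\mathbb{C})$. I would then write out the relation $\phi\psi = \psi\phi$ explicitly. On the left one gets $\left(a(\nu(y))\frac{A(y)x+B(y)}{C(y)x+D(y)}, \nu(y)\right)$; on the right, substituting $a(y)x$ for $x$ in the first component of $\psi$, one gets $\left(\frac{A(y)a(y)x+B(y)}{C(y)a(y)x+D(y)}, \nu(y)\right)$. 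Equating the first components as elements of $\mathrm{PGL}_2(\mathbb{C}(y))$ (i.e. up to a common scalar in $\mathbb{C}(y)^*$) gives a matrix identity that I would exploit to pin down which homographies-in-the-fibre are allowed.

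First I would dispose of the bounded-growth case $a \in \mathbb{C}^*$. Here $\phi = (ax, y)$ with $a$ a constant of infinite order (non-periodicity), and the fixed point set in the fibres is $\{x=0\}\cup\{x=\infty\}$, which $\psi$ must preserve as a set. The matrix identity forces $\left[\begin{smallmatrix} A & B \\ C & D\end{smallmatrix}\right]$ to be diagonal or anti-diagonal; but $a$ has infinite order, so the anti-diagonal case $\left(\frac{b(y)}{x},\nu(y)\right)$ would conjugate $\phi$ to $\phi^{-1}$, impossible unless... actually it is possible, so here I must be more careful — no, wait: if $\psi$ is anti-diagonal then $\psi\phi\psi^{-1} = (a^{-1}x,y) \ne \phi$, contradiction. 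Hence $\psi$ is diagonal: $\psi = (b(y)x, \nu(y))$ with $b \in \mathbb{C}(y)^*$ and $\nu \in \mathrm{PGL}_2(\mathbb{C})$ arbitrary, since the scalar $a$ imposes no further constraint on $\nu$. This gives exactly the stated group.

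For the linear-growth case, $a \in \mathbb{C}(y)\setminus\mathbb{C}$, and by Theorem~\ref{cri} this is the interesting regime. The same analysis of the matrix identity shows $\psi$ is either of the form $(b(y)x, \nu(y))$ (diagonal) or $\left(\frac{b(y)}{x}, \nu(y)\right)$ (anti-diagonal). In the diagonal case, $\phi\psi=\psi\phi$ forces $a(\nu(y)) = a(y)$, i.e. $\nu \in \mathrm{stab}(a)$, while $b$ is unconstrained; this yields the subgroup $\mathrm{J}_m \rtimes \overline{\mathrm{stab}(a)}$. In the anti-diagonal case one computes that commutation forces $a(\nu(y)) = a(y)^{-1}$, so $\nu \in \mathrm{Stab}(a)\setminus\mathrm{stab}(a)$, and such elements exist precisely when that coset is nonempty; together with the diagonal part these generate $\mathrm{J}_m \rtimes \overline{\mathrm{Stab}(a)}$ by the very definition of $\overline{\mathrm{Stab}(a)}$. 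The main obstacle I anticipate is the bookkeeping in the matrix identity: one must correctly separate the projective ambiguity (a common factor in $\mathbb{C}(y)^*$) from the genuine constraints, and verify that the anti-diagonal elements of the asserted form do commute with $\phi$ and not merely conjugate it to something else — that is, checking that $\left(\frac{1}{x},\nu(y)\right)\phi\left(\frac{1}{x},\nu(y)\right)^{-1} = (a(\nu(y))^{-1}x, y)$ equals $\phi$ exactly when $a(\nu(y)) = a(y)^{-1}$. Finally I would note that $\mathrm{J}_m \cap \overline{\mathrm{Stab}(a)}$ is handled correctly so that the semidirect product decomposition is genuine, and that finiteness of $\mathrm{Stab}(a)$ (already recorded before the statement) makes $\mathrm{C}(\phi)$ a finite extension of $\mathrm{Ab}(\phi) = \mathrm{J}_m$, consistent with Theorem~\ref{ababelien}.
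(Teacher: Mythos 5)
Your proposal is correct and follows essentially the same route as the paper: Proposition~\ref{carac} to put $\psi$ in Jonqui\`eres form, then the commutation relation in $\mathrm{PGL}_2(\mathbb{C}(y))$ forcing $\psi$ to be diagonal (whence $a(\nu(y))=a(y)$, i.e. $\nu\in\mathrm{stab}(a)$) or anti-diagonal (whence $a(\nu(y))=a(y)^{-1}$, i.e. $\nu\in\mathrm{Stab}(a)\setminus\mathrm{stab}(a)$), yielding $\mathrm{J}_m\rtimes\overline{\mathrm{Stab}(a)}$. The only divergence is the constant-$a$ case, where the paper shows $\frac{\partial P}{\partial x}$ is constant on the Zariski closure of the orbit $\{(\alpha^n x_0,y_0)\}$ to get $P$ affine in $x$ and then kills the translation part, whereas you use preservation of the fixed-point set $\{x=0\}\cup\{x=\infty\}$ and rule out the anti-diagonal case via $a^2\neq 1$ --- both arguments are valid.
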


\begin{rems}
\begin{itemize}
\item En g\'en\'eral le groupe $\overline{\mathrm{Stab}(a)}$ se r\'eduit \`a l'identit\'e; plus pr\'ecis\'ement dans l'ensemble des fractions 
rationnelles de degr\'e donn\'e $d$ la propri\'et\'e $\overline{\mathrm{Stab}(a)}=\{\mathrm{id}\}$ est g\'en\'erique. Par suite, \og g\'en\'eriquement\fg\, pour~$\phi$ 
dans $\mathrm{J}_m$, le groupe $\mathrm{C}(\phi)$ se r\'eduit \`a $\mathrm{J}_m=\mathrm{Ab}(\phi)$.

\item Si $\phi=(a(y)x,y)$ avec $a$ non constant, alors $\mathrm{C}(\phi)$ est une extension finie de $\mathrm{J}_m=\mathrm{Ab}(\phi)$.

\item Si $\phi=(ax,y)$, avec $a$ dans $\mathbb{C}^*$, on a encore $\mathrm{C}(\phi)=\mathrm{J}_m\rtimes\overline{\mathrm{Stab}(a)}$ 
puisqu'ici on peut d\'efinir $\mathrm{Stab(a)}=\mathrm{PGL}_2(\mathbb{C})$.
\end{itemize}
\end{rems}

\begin{proof}
Dans un premier temps supposons que $\phi=(\alpha x,y)$ avec $\alpha$ dans $\mathbb{C}^*$ (non racine de l'unit\'e). Soit~$\psi$ une 
transformation birationnelle qui commute \`a $\phi;$ on peut, d'apr\`es la Proposition \ref{carac}, \'ecrire $\psi$ sous la forme 
$(P(x,y),Q(y))$. \'Etant donn\'e un point $(x_0,y_0)$ fix\'e on obtient en \'ecrivant la commutation de $\phi^n$ et $\psi$ l'\'egalit\'e 
$P(\alpha^nx_0,y_0)=\alpha^nP(x_0,y_0)$ d'o\`u $\frac{\partial P}{\partial x}(\alpha^nx_0,y_0)=\frac{\partial P}{\partial x}(x_0,y_0)$. 
Autrement dit $\frac{\partial P}{\partial x}$ est constante sur l'adh\'erence de Zariski de $\big\{(\alpha^nx_0,y_0)\,\vert\, n\in~\mathbb{Z}
\big\}$. Il en r\'esulte que $\frac{\partial P}{\partial x}$ est une fonction de $y;$ ainsi $P(x,y)$ est du type~$a(y)x+b(y)$. En 
r\'e\'ecrivant la commutation de $\phi^n$ et~$\psi$ on obtient $b(y)=\alpha^nb(y)$ ce qui conduit n\'ecessairement \`a $b=0$, c.-\`a.d. 
$\psi$ s'\'ecrit $(a(y)x,\nu(y))$.

\medskip

\noindent Supposons que $\phi$ s'\'ecrive $(a(y)x,y)$ avec $a$ dans $\mathbb{C}(y)\setminus\mathbb{C}^*$. Tout \'el\'ement $\psi$ de 
$\mathrm{C}(\phi)$ pr\'eserve la fibra\-tion~$y=$~cte (Proposition \ref{carac}):
\begin{align*}
&\psi=\left(\frac{A(y)x+B(y)}{C(y)x+D(y)},\nu(y)\right), && \left[\begin{array}{cc}A & B\\ C & D\end{array}\right]\in\mathrm{PGL}_2(
\mathbb{C}[y]), \,\nu\in\mathrm{PGL}_2(\mathbb{C}).
\end{align*}

\noindent En \'ecrivant explicitement la commutation de $\psi$ et $\phi$ on obtient:
\begin{align*}
&A(y)C(y)=A(y)C(y)a(\nu(y)) &&\text{et} && B(y)D(y)=B(y)D(y)a(\nu(y));
\end{align*}
on en d\'eduit que $AC$ et $BD$ sont nuls. Par suite $A=D=0$ ou $B=C=0$. Supposons dans un premier temps que~$B=C=0$ c'est-\`a-dire 
au changement de notation pr\`es $\psi=(A(y)x,\nu(y))$. La commutation de~$\phi$ et $\psi$ entra\^ine $a(\nu(y))=a(y)$. Comme 
$\overline{\mathrm{stab}(a)}\subset\mathrm{C}(\phi)$, il en r\'esulte que $\psi$ est dans le produit semi-direct~$\mathrm{J}_m
\rtimes~\overline{\mathrm{stab}(a)}$. Si maintenant $A=D=0$, la transformation $\psi$ est du type $\left(\frac{B(y)}{x},\nu(y)
\right)$ et cette fois la commutation entra\^ine~$a(\nu(y))=a(y)^{-1}$. Puisque $\overline{\mathrm{Stab}(a)}$ est contenu dans 
$\mathrm{C}(\phi)$, ici encore $\psi$ est dans $\mathrm{J}_m\rtimes\overline{\mathrm{Stab}(a)}$.
\end{proof}

\subsection{Cas des transformations $\phi$ de $\mathrm{J}_F$}

\noindent Dans ce qui suit on pr\'esente le calcul de $\mathrm{C}(\phi)$ pour les \'el\'ements de~$\mathrm{J}_F$ tels que~$F$ 
soit \`a racines simples (comme on l'a dit on s'y ram\`ene via conjugaison par un \'el\'ement de la forme $(a(y)x,y)$).
On \'ecrit donc $\phi$ sous la forme $\phi=\left(\frac{c(y)x+F(y)}{x+c(y)},y\right)$ o\`u $c\in\mathbb{C}(y)$; la courbe de 
points fixes $\mathcal{C}$ de $\phi$ est donn\'ee par $x^2=F(y)$. Puisque les valeurs propres de $\mathrm{M}_\phi=\left
[\begin{array}{cc}c(y) & F(y)\\ 1 & c(y)\end{array}\right]$ sont $c(y)\pm\sqrt{F(y)}$ on constate que $\phi$ est p\'eriodique 
si et seulement si $c$ est identiquement nul et dans ce cas $\phi$ est de p\'eriode $2$. Dans la suite on suppose $\phi$ non 
p\'eriodique. Comme $F$ est \`a racines simples le genre de $\mathcal{C}$ est sup\'erieur ou \'egal \`a $2$ pour $\deg F\geq 5$, 
vaut $1$ pour $\deg F\in\{3,\,4\}$; enfin $\mathcal{C}$ est rationnelle lorsque~$\deg F\in\{1,\,2\}$.

\subsubsection{Cas o\`u $\mathcal{C}$ est de genre strictement positif} 

\noindent Puisque $\phi$ est \`a croissance lin\'eaire, $\phi$ n'est pas p\'eriodique. Sur une fibre g\'en\'erale $\phi$ a deux 
points fixes qui correspondent aux deux points sur la courbe $x^2=F(y)$. Les cour\-bes~$x^2=F(y)$ et les fibres $y=$ cte sont 
invariantes par $\phi$ et il n'y a aucune autre courbe invariante m\^eme de genre $0$ ou $1$. En effet une courbe invariante 
diff\'erente de $y=$ cte intersecte une fibre g\'en\'erale en un nombre fini de points forc\'ement invariants par $\phi$; comme 
$\phi$ est d'ordre infini c'est impossible car une transformation de Moebius qui laisse invariant un ensemble \`a plus de trois 
\'el\'ements est p\'eriodique. 

\begin{pro}\label{genregd}
Soit $\phi$ l'\'el\'ement de $\mathrm{J}_F$ donn\'e par $\left(\frac{c(y)x+F(y)}{x+c(y)},y\right)$. Supposons que
$\phi$ soit non p\'eriodique $($c.-\`a.d. $c\not=0)$ et que $F$ soit un polyn\^ome 
\`a racines simples de degr\'e sup\'erieur ou \'egal \`a $3$ $($c.-\`a.d. le genre de $\mathcal{C}$ est sup\'erieur 
ou \'egal \`a $1)$. Alors $\mathrm{C}(\phi)$ est une extension finie de $\mathrm{J}_F=\mathrm{Ab}(\phi)$, qui est
triviale pour la plupart des $F$.
\end{pro}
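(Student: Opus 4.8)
The plan is to exploit the fact that, by Proposition~\ref{carac}, every $\psi\in\mathrm{C}(\phi)$ lies in $\mathrm{J}$, hence can be written $\psi=(\psi_1(x,y),\nu(y))$ with $\nu\in\mathrm{PGL}_2(\mathbb{C})$, and then to show that the constraints imposed by commutation with $\phi$ force $\psi$ to lie in $\mathrm{J}_F$ up to a finite-index ambiguity. First I would analyze the discussion already made before the statement: the curve $\mathcal{C}=\{x^2=F(y)\}$ and the fibration $y=\mathrm{cte}$ are the only invariant curves of $\phi$, and $\phi$ acts on each generic fibre as an order-infinite M\"obius transformation with fixed point set the two points of $\mathcal{C}$ above $y$. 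Since $\psi$ conjugates $\phi$ to $\phi$, it must permute the set of $\phi$-invariant curves; the fibration being the unique rational fibration preserved, $\psi$ must send $\{y=\mathrm{cte}\}$ to itself (this is the content of $\psi\in\mathrm{J}$) and must send $\mathcal{C}$ to itself. This last fact is the key geometric input: $\psi$ induces an automorphism of $\mathcal{C}$, a curve of genus $\geq 1$.

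Next I would use this to control $\nu$. The projection $\mathcal{C}\to\mathbb{P}^1(\mathbb{C})$, $(x,y)\mapsto y$, is a degree-$2$ map (the hyperelliptic or, in genus $1$, degree-$2$ structure), and the induced automorphism of $\mathcal{C}$ must descend to $\nu$ on the base. In genus $\geq 2$ the automorphism group $\mathrm{Aut}(\mathcal{C})$ is finite, so only finitely many $\nu$ can occur; in genus $1$, $\mathrm{Aut}(\mathcal{C})$ is an extension of a finite group by translations, but translations of an elliptic curve presented as $x^2=F(y)$ with $\deg F\in\{3,4\}$ do not in general descend to $\mathbb{P}^1$ unless they are compatible with the hyperelliptic involution, so again only finitely many $\nu$ survive, and for generic $F$ (no extra automorphisms) only the identity and the deck involution $(x,y)\mapsto(-x,y)$ remain. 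So the image $\pi_2(\mathrm{C}(\phi))\subset\mathrm{PGL}_2(\mathbb{C})$ is finite.

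Then I would treat the kernel, i.e.\ the elements $\psi\in\mathrm{C}(\phi)\cap\mathrm{J}_0$. Writing $\psi$ with matrix $\mathrm{M}_\psi\in\mathrm{PGL}_2(\mathbb{C}[y])$ and using that $\psi$ commutes with $\phi$ fibrewise, I reduce to a computation in $\mathrm{PGL}_2$ over the field $\mathbb{C}(y)+\sqrt{F}\,\mathbb{C}(y)$: after conjugating $\phi$ to the diagonal form $\psi_0=\Big(\frac{c+\sqrt F}{c-\sqrt F}x,y\Big)$ as in \S\ref{critere}, the centraliser of a non-periodic diagonal M\"obius map over a field is the diagonal torus together with the single off-diagonal involution $x\mapsto 1/x$; transporting back, the diagonal torus over $\mathbb{C}(y)+\sqrt F\,\mathbb{C}(y)$ that is defined over $\mathbb{C}(y)$ is exactly $\mathrm{J}_F$, while the extra involution contributes only a factor of order $2$. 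Hence $\mathrm{C}(\phi)\cap\mathrm{J}_0$ is $\mathrm{J}_F$ possibly extended by this order-$2$ element, and combined with the finiteness of $\pi_2(\mathrm{C}(\phi))$ this gives that $\mathrm{C}(\phi)$ is a finite extension of $\mathrm{J}_F=\mathrm{Ab}(\phi)$, trivial for the $F$ with $\mathrm{Aut}(\mathcal{C})$ minimal.

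The main obstacle is the genus-$1$ case: there one must argue carefully that a translation of the elliptic curve $\mathcal{C}$ which does not fix the two points at infinity (or the ramification of the degree-$2$ map $y$) cannot descend to a M\"obius transformation $\nu$ of the base, so that the relevant automorphisms are only those normalizing the hyperelliptic involution; and one must check that, even when $\mathcal C$ has complex multiplication or $j\in\{0,1728\}$ giving extra automorphisms, these still form a finite group, so the extension stays finite. The genus-$\geq 2$ case is comparatively soft since $\mathrm{Aut}(\mathcal{C})$ is automatically finite. A secondary point requiring care is checking that elements with $\nu\neq\mathrm{id}$ actually satisfy, rather than merely are consistent with, the commutation relation — but since $\overline{\mathrm{stab}}$-type constructions as in Proposition~\ref{centtypeb} show the relevant $\nu$ do lift, this only affects the precise size of the finite extension, not the statement.
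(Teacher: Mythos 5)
Your strategy is essentially the paper's: by Proposition~\ref{carac} every $\psi\in\mathrm{C}(\phi)$ lies in $\mathrm{J}$, it cannot contract the curve $\mathcal{C}=\{x^2=F(y)\}$ (transverse to the fibration), so it restricts to an automorphism of $\mathcal{C}$ commuting with the hyperelliptic involution; the group of such automorphisms is finite (clear in genus $\geq 2$, and in genus $1$ because the ramification points of $y\colon\mathcal{C}\to\mathbb{P}^1$ are preserved, which rules out the translations you worry about), and the elements acting trivially on $\mathcal{C}$ are exactly $\mathrm{J}_F$. The paper packages both the image and the kernel of $\pi_{2\vert_{\mathrm{C}(\phi)}}$ through the single restriction map $\mathrm{C}(\phi)\to\mathrm{Aut}_\tau(\mathcal{C})$, whereas you treat the kernel by diagonalising over $\mathbb{C}(y)+\sqrt{F}\,\mathbb{C}(y)$; both work. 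One genuine error to correct: the centraliser of a non-periodic diagonal M\"obius map $x\mapsto\lambda x$ is the diagonal torus \emph{only} --- the off-diagonal involution $x\mapsto 1/x$ lies in the normaliser, not the centraliser, since it conjugates $\lambda x$ to $\lambda^{-1}x$. Transported back, this involution is $\tau=\left(-\frac{F(y)}{x},y\right)$, and the paper checks explicitly that $\tau\phi\tau^{-1}=\phi^{-1}$, so $\tau\notin\mathrm{C}(\phi)$ and $\ker\pi_{2\vert_{\mathrm{C}(\phi)}}$ is exactly $\mathrm{J}_F$ with no order-$2$ extension. Since your slip only enlarges the claimed group by a finite factor, the conclusion that $\mathrm{C}(\phi)$ is a finite extension of $\mathrm{J}_F$ is unaffected, but the precise identification of the kernel requires this check.
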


\begin{proof}
Sur chaque fibre g\'en\'erique ($y=y_0$ avec $F(y_0)\not=0$) la restriction $\phi_{\vert y=y_0}$ de $\phi$ a deux 
points fi\-xes~$(\pm\sqrt{F(y_0)},y_0)$. La Proposition \ref{carac} assure que $\mathrm{C}(\phi)\subset
\mathrm{J}$. Nous allons nous int\'eresser au noyau de $$\pi_{2\vert_{\mathrm{C}(\phi)}}\colon\mathrm{C}(\phi)\to
\mathrm{PGL}_2(\mathbb{C}),$$ c.-\`a.d. aux \'el\'ements $\psi$ de $\mathrm{C}(\phi)$ qui pr\'eservent la fibration
 $y=$ cte fibre \`a fibre. Notons que $\psi$ pr\'eserve $\mathcal{C}$ et que l'automor\-phisme~$\psi_{\vert\mathcal{C}}$
 de $\mathcal{C}$ respecte les points fixes pr\'ec\'edents. Par cons\'equent ou bien $\psi_{\vert\mathcal{C}}$ est 
l'identit\'e de $\mathcal{C}$, ce qui revient \`a dire que $\psi$ est dans $\mathrm{J}_F$, ou bien $\psi_{\vert
\mathcal{C}}$ est l'involution $(-x,y)$ de $\mathcal{C}$. On remarque que~$(-x,y)$ est r\'ealis\'ee par l'involution 
globale $\tau=\left(-\frac{F(y)}{x},y\right);$ ainsi toute transformation birationnelle pr\'eservant~$\mathcal{C}$
 et la fibration $y=$ cte fibre \`a fibre est ou bien dans $\mathrm{J}_F$, ou bien dans $\tau\,\mathrm{J}_F$.

\noindent Un calcul \'el\'ementaire montre que $\tau\phi\tau^{-1}=\tau\phi\tau=\phi^{-1}$ si bien que $\tau$ n'est pas 
dans $\mathrm{C}(\phi)$. Il en r\'esulte que~$\ker\pi_{2\vert_{\mathrm{C}(\phi)}}=\mathrm{J}_F$.

\noindent Consid\'erons un \'el\'ement $\varphi$ de $\mathrm{C}(\phi)$. Comme $\varphi$ doit pr\'eserver $\mathcal{C}$ 
et la fibration $y=$ cte la restriction $\varphi_{\vert\mathcal{C}}$ de~$\varphi$ \`a $\mathcal{C}$ est un automorphisme 
de $\mathcal{C}$ qui commute \`a l'involution $\tau_{\vert\mathcal{C}}$. En g\'en\'eral sur $F$ le groupe 
$\mathrm{Aut}_\tau(\mathcal{C})$ de tels automorphismes se r\'eduit \`a l'identit\'e et $\tau_{\vert\mathcal{C}}$. Dans 
tous les cas $\mathrm{Aut}_\tau(\mathcal{C})$ est un groupe fini. C'est bien s\^ur \'evident lorsque le genre de 
$\mathcal{C}$ est sup\'erieur ou \'egal \`a $2$ puisque le groupe des automorphismes de $\mathcal{C}$ est fini; dans 
le cas elliptique, on conclut en utilisant le fait que les $\psi_{\vert\mathcal{C}}$ laissent invariants les points 
de ramification.
\end{proof}

\begin{rems}
\begin{itemize}
\item Supposons que $F(y)=\displaystyle\prod_{i=1}^s(y^k-y_i)$ o\`u $k$ d\'esigne un entier positif et $y_i$ des complexes distincts 
non nuls. Si $\phi=\left(\frac{c(y)x+F(y)}{x+c(y)},y\right)$, avec $c\not=0$, alors $\mathrm{C}(\phi)$ contient $\mathrm{G}=\big\{(x,
\omega y)\,\vert\,\omega^k=1\big\}$ d\`es que~$\mathrm{G}$ est contenu dans $\overline{\mathrm{stab}(c)}$ autrement dit d\`es que $c$ 
est invariant par les $\omega y$, $\omega^k=1$.
  
\item Plus g\'en\'eralement consid\'erons une transformation $\phi=\left(\frac{c(y)x+F(y)}{x+c(y)},y\right)$. Supposons que
 $\mathrm{stab}(c)$ et $\mathrm{stab}(F)$ contiennent le groupe $\langle \frac{1}{y},\,\omega y\,\vert\,\omega^k=1\rangle$; c'est 
par exemple le cas   lorsque $c$ et $F$ se factorisent dans $y^k+\frac{1}{y^k}$. Alors~$\mathrm{C}(\phi)$ contient le groupe 
di\'edral $\langle \left(x,\frac{1}{y}\right),\,(x,\omega y)\,\vert\,\omega^k=1 \rangle$. Notons qu'ici, par commodit\'e, nous avons 
choisi une \'ecriture non polynomiale de $F$.
\end{itemize}
\end{rems}

\subsubsection{Cas o\`u $\mathcal{C}$ est rationnelle}\label{Subsec:crationnelle} 

\noindent Soit $\phi$ un \'el\'ement de $\mathrm{J}_F$ \`a croissance lin\'eaire.
\noindent Comme on l'a vu la courbe de points fixes $\mathcal{C}$ de $\phi$ est donn\'ee par $x^2=F(y)$. Soit $\psi$ un \'el\'ement de 
$\mathrm{C}(\phi)$. Alors ou bien~$\psi$ contracte $\mathcal{C}$, ou bien~$\psi$ pr\'eserve~$\mathcal{C}$. Or la Proposition \ref{carac} 
assure que $\psi$ pr\'eserve la fibration $y=$ cte; la courbe $\mathcal{C}$ \'etant transverse \`a la fibration, $\psi$ ne peut pas 
contracter $\mathcal{C}$ donc $\psi$ est un \'el\'ement du groupe de Jonqui\`eres qui pr\'eserve~$\mathcal{C}$. D\`es que $F$ est de 
degr\'e sup\'erieur ou \'egal \`a $3$ les hypoth\`eses de la Proposition \ref{genregd} sont satisfaites; supposons donc que~$\deg F\leq 2$. 
Le cas $\deg F=2$ se ram\`ene \`a $\deg F=1$. En effet, soit $\phi$ un \'el\'ement du type $\left(\frac{c(y)x+y}{x+c(y)},y\right)$; posons 
$\varphi=\left(\frac{x}{cy+d}, \frac{ay+b}{cy+d}\right)$. On peut v\'erifier que $\varphi^{-1}\phi \varphi$ est du type $\left(
\frac{\widetilde{c}(y)x+(ay+b)(cy+d)}{x+\widetilde{c}(y)},y\right)$, ce qui permet d'atteindre tous les polyn\^omes quadratiques $F$ \`a 
racines simples. Si $\deg F=1$, c.-\`a.d. de la forme $ay+b$, on se ram\`ene, quitte \`a conjuguer $\phi$ par $\left(x,\frac{y-b}{a}\right)$, \`a $F(y)=y$.

\begin{lem}\label{secondcomp}
Soit $\phi$ une transformation du type $\left(\frac{c(y)x+y}{x+c(y)},y\right)$ avec $c$ dans $\mathbb{C}(y)^*$. Si $\psi$ est un \'el\'ement 
de~$\mathrm{C}(\phi)$, alors~$\pi_2(\psi)$ est ou bien du type $\frac{\alpha}{y}$, $\alpha\in\mathbb{C}^*$, ou bien du type $\xi y$, $\xi$ 
racine de l'unit\'e; de plus, $\pi_2(\psi)$ est contenu dans le groupe fini $\mathrm{stab}\left(\frac{4c(y)^2}{c(y)^2-y}\right)$.
\end{lem}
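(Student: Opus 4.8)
The plan is to pin down $\nu=\pi_2(\psi)$ by two independent constraints: one coming from the Baum--Bott index (a conjugacy invariant inside $\mathrm{J}_0$), the other from the fixed-point curve $\mathcal{C}\colon x^2=y$ of $\phi$. Since $c\neq 0$, the transformation $\phi$ is not periodic, so Proposition~\ref{carac} gives $\mathrm{C}(\phi)\subset\mathrm{J}$; hence any $\psi\in\mathrm{C}(\phi)$ can be written $\psi=\left(\frac{A(y)x+B(y)}{C(y)x+D(y)},\nu(y)\right)$ with $\nu\in\mathrm{PGL}_2(\mathbb{C})$. Denote by $N(y)\in\mathrm{PGL}_2(\mathbb{C}[y])$ the matrix of the homography $x\mapsto\frac{A(y)x+B(y)}{C(y)x+D(y)}$ (with $y$ as parameter), so that on the fibre coordinate $\phi$ and $\psi$ act by $x\mapsto\mathrm{M}_\phi(y)\cdot x$ and $x\mapsto N(y)\cdot x$ respectively.

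The first step is to rewrite $\psi\phi=\phi\psi$ as a matrix identity. Comparing first coordinates yields, for every $x$, the equality $N(y)\cdot(\mathrm{M}_\phi(y)\cdot x)=\mathrm{M}_\phi(\nu(y))\cdot(N(y)\cdot x)$ --- note that it is $\mathrm{M}_\phi(\nu(y))$, not $\mathrm{M}_\phi(y)$, that appears, since $\phi$ then acts on the fibre over $\nu(y)$ --- hence $N(y)\,\mathrm{M}_\phi(y)=\mathrm{M}_\phi(\nu(y))\,N(y)$ in $\mathrm{PGL}_2(\mathbb{C}(y))$. Thus $\mathrm{M}_\phi(\nu(y))$ is conjugate, up to a scalar of $\mathbb{C}(y)^*$, to $\mathrm{M}_\phi(y)$; as $\mathrm{BB}$ is invariant under conjugation and under rescaling of the matrix, we get $\mathrm{BB}(\phi)(\nu(y))=\mathrm{BB}(\phi)(y)$, that is $\nu\in\mathrm{stab}\left(\mathrm{BB}(\phi)\right)=\mathrm{stab}\left(\frac{4c(y)^2}{c(y)^2-y}\right)$, using $\mathrm{BB}(\phi)=\frac{4c^2}{c^2-F}$ with $F=y$. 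Since $c\neq 0$, \S\ref{critere} gives $\mathrm{BB}(\phi)\in\mathbb{C}(y)\setminus\mathbb{C}$, so this stabiliser is finite and $\nu$ has finite order in $\mathrm{PGL}_2(\mathbb{C})$; this already settles the last assertion of the lemma.

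It remains to determine the shape of $\nu$. Here I would use that $\psi$ preserves the fixed-point curve $\mathcal{C}\colon x^2=y$ of $\phi$, as recalled just before the statement ($\mathcal{C}$ is transverse to the fibration, hence not contracted by $\psi$). Parametrising $\mathcal{C}$ by $t\mapsto(t,t^2)$, the projection $p\colon\mathcal{C}\to\mathbb{P}^1(\mathbb{C})$, $(x,y)\mapsto y$, becomes $t\mapsto t^2$, a degree-two cover ramified exactly over $0$ and $\infty$. Since $\psi_{\vert\mathcal{C}}$ is an automorphism of $\mathcal{C}$ with $p\circ\psi_{\vert\mathcal{C}}=\nu\circ p$, the homography $\nu$ carries the branch locus of $p$ onto itself, whence $\nu(\{0,\infty\})=\{0,\infty\}$ and so $\nu(y)=\alpha y$ or $\nu(y)=\alpha/y$ for some $\alpha\in\mathbb{C}^*$. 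Combining this with the finiteness of the order of $\nu$ obtained above: if $\nu(y)=\alpha/y$ we are done, while if $\nu(y)=\alpha y$ then $\alpha$ must be a root of unity. The only delicate points are bookkeeping ones: propagating the base component $\nu$ correctly through the commutation relation, and checking that the a priori projective ambiguity in $N(y)\,\mathrm{M}_\phi(y)=\mathrm{M}_\phi(\nu(y))\,N(y)$ does not affect the Baum--Bott computation.
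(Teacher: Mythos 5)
Your proposal is correct and follows essentially the same route as the paper: the authors likewise derive $\mathrm{BB}(\phi)(\pi_2(\psi))=\mathrm{BB}(\phi)(y)$ from the conjugacy $N\,\mathrm{M}_\phi\,N^{-1}=\mathrm{M}_\phi(\pi_2(\psi))$ in $\mathrm{PGL}_2(\mathbb{C}(y))$, and pin down the shape of $\pi_2(\psi)$ via the invariance of $\{0,\infty\}$ on the curve $x^2=y$ (they phrase it through the fixed points of the involution $(x,y)\mapsto(-x,y)$ rather than the branch locus of the double cover, which is the same two points). Your explicit deduction that $\xi$ is a root of unity from the finiteness of $\mathrm{stab}\left(\frac{4c^2}{c^2-y}\right)$ is a point the paper leaves implicit, but the argument is identical in substance.
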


\begin{proof} 
\'Ecrivons $\psi$ sous la forme $\left(\frac{A(y)x+B(y)}{C(y)x+D(y)},\pi_2(\psi)\right)$. 

\noindent La conique $\mathcal{C}$ d\'ecrite par $y=x^2$ est munie de l'involution $\iota\colon(x,y)\mapsto(-x,y)$ qui a ses points 
fixes en $(0,0)$ et $(0,\infty)\in\mathcal{C}$. On remarque que la restriction $\psi_{\vert\mathcal{C}}\colon\mathcal{C}\to\mathcal{C}$ 
est un automorphisme de $\mathcal{C}$ qui commute \`a $\iota_{\vert\mathcal{C}}$. En particulier les points fixes de $\iota$ sont 
pr\'eserv\'es par $\psi_{\vert\mathcal{C}}$ dans leur ensemble. Il en r\'esulte que $\pi_2(\psi)$ laisse invariant l'ensemble $\{0,\,\infty\}$,
 c.-\`a.d. ou bien $\pi_2(\psi)=\alpha y$, ou bien $\pi_2(\psi)=\frac{\alpha}{y}$.

\noindent La commutation de $\phi$ et $\psi$ se traduit par l'\'egalit\'e suivante dans $\mathrm{PGL}_2(\mathbb{C}(y))$:
$$\left[\begin{array}{cc}c(\pi_2(\psi)) & \pi_2(\psi)\\ 1 & c(\pi_2(\psi))\end{array}\right]=\left[\begin{array}{cc}A(y) & B(y)\\ C(y) & D(y)
\end{array}\right]\mathrm{M}_\phi\left[\begin{array}{cc}A(y) & B(y)\\ C(y) & D(y)\end{array}\right]^{-1}.$$ Ceci implique que les matrices 
$\mathrm{M}_\phi=\left[\begin{array}{cc}c(y) & y \\ 1 & c(y)\end{array}\right]$ et $\mathrm{M}_\phi(\pi_2(\psi))=\left[\begin{array}{cc}
c(\pi_2(\psi)) & \pi_2(\psi) \\ 1 & c(\pi_2(\psi))\end{array}\right]$ sont conjugu\'ees dans~$\mathrm{PGL}_2(\mathbb{C}(y))$. En particulier 
on a l'\'egalit\'e $$\mathrm{BB}(\phi)\big(\pi_2(\psi)\big)=\mathrm{BB}(\phi)(y)=\frac{4c(y)^2}{c(y)^2-y}.$$ 
\end{proof}

\noindent Pour $\alpha$ non nul on note $\mathrm{D}_\infty(\alpha)$ le groupe di\'edral infini $$\mathrm{D}_\infty(\alpha)=\Big\langle\frac{\alpha}{y},
\,\omega y\,\big\vert\,\omega\text{ racine de l'unit\'e}\Big\rangle;$$ remarquons que les $\mathrm{D}_\infty(\alpha)$ sont conjugu\'es \`a $\mathrm{D}_\infty(1)$. 

\noindent Si $c$ est un \'el\'ement non constant de $\mathbb{C}(y)^*$ on d\'esigne par $\mathrm{S}(c;\alpha)$ le sous-groupe fini de 
$\mathrm{PGL}_2(\mathbb{C})$ donn\'e par $$\mathrm{S}(c;\alpha)=\mathrm{stab}\left(\frac{4c(y)^2}{c(y)^2-y}\right)\cap\mathrm{D}_\infty(\alpha).$$

\noindent La description des $\mathrm{C}(\phi)$ avec $\phi$ dans $\mathrm{J}_F$ et $\mathcal{C}=\mathrm{Fix}\,\phi$ rationnelle se ram\`ene \`a 
l'\'enonc\'e suivant.

\begin{pro}\label{genrenul}
Soit $\phi=\left(\frac{c(y)x+y}{x+c(y)},y\right)$ avec $c$ dans $\mathbb{C}(y)^*$, $c$ non constant. Il existe $\alpha$ dans $\mathbb{C}^*$ tel 
que $$\mathrm{C}(\phi)=\mathrm{J}_y\rtimes\mathrm{S}(c;\alpha).$$
\noindent 
\end{pro}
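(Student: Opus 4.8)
The strategy is to combine the two constraints we have already isolated: first, by Proposition~\ref{carac} every $\psi\in\mathrm{C}(\phi)$ lies in $\mathrm{J}$, so it can be written $\psi=\bigl(\frac{A(y)x+B(y)}{C(y)x+D(y)},\nu(y)\bigr)$ with $\nu=\pi_2(\psi)$; second, by Lemme~\ref{secondcomp}, applied with $F(y)=y$, the second component $\nu$ is either $\xi y$ with $\xi$ a root of unity or $\alpha/y$ with $\alpha\in\mathbb{C}^*$, and moreover $\nu\in\mathrm{stab}\bigl(\frac{4c(y)^2}{c(y)^2-y}\bigr)$. The first observation pins down where the "vertical" part of $\psi$ lives; the second pins down the possible images under $\pi_2$. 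So the whole proof amounts to (i) computing $\ker\bigl(\pi_{2\mid\mathrm{C}(\phi)}\bigr)$, (ii) identifying the image of $\pi_{2\mid\mathrm{C}(\phi)}$ as exactly $\mathrm{S}(c;\alpha)$ for a suitable $\alpha$, and (iii) showing the extension splits.

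\medskip

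\textbf{Step (i): the kernel is $\mathrm{J}_y$.} An element $\psi\in\mathrm{C}(\phi)$ with $\pi_2(\psi)=\mathrm{id}$ lies in $\mathrm{J}_0$ and commutes with $\phi$; since $\mathrm{Ab}(\phi)=\mathrm{J}_y$ is the unique maximal non-finite abelian subgroup of $\mathrm{J}_0$ containing $\phi$ (recalled in \S\ref{Sec:J0}), and since $\phi$ is non-periodic, any element of $\mathrm{J}_0$ commuting with $\phi$ must preserve the fixed-point curve $\mathcal{C}=\{x^2=y\}$ fibre by fibre and fix the two points $(\pm\sqrt{F(y)},y)$ on a general fibre, hence acts trivially on $\mathcal{C}$, hence lies in $\mathrm{J}_y$; conversely $\mathrm{J}_y$ is abelian so it is contained in $\mathrm{C}(\phi)$. (Alternatively one reruns the argument of Proposition~\ref{genregd}: the only Jonquières maps preserving $\mathcal{C}$ and the fibration fibrewise are those in $\mathrm{J}_y$ and those in $\tau\mathrm{J}_y$ with $\tau=(-y/x,y)$, and $\tau\phi\tau^{-1}=\phi^{-1}\neq\phi$, so $\tau\notin\mathrm{C}(\phi)$.) Thus $\ker\bigl(\pi_{2\mid\mathrm{C}(\phi)}\bigr)=\mathrm{J}_y$.

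\medskip

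\textbf{Step (ii): the image is $\mathrm{S}(c;\alpha)$.} By Lemme~\ref{secondcomp} the image $\Gamma:=\pi_2\bigl(\mathrm{C}(\phi)\bigr)$ is a subgroup of $\mathrm{stab}\bigl(\frac{4c(y)^2}{c(y)^2-y}\bigr)$ consisting of maps of the form $\xi y$ ($\xi$ a root of unity) or $\alpha/y$. The set of all Möbius maps of these two shapes is not a priori a group, but any \emph{subgroup} of $\mathrm{PGL}_2(\mathbb{C})$ made of such elements is contained in some infinite dihedral group $\mathrm{D}_\infty(\alpha)=\langle \alpha/y,\ \omega y\mid \omega^N=1\rangle$: indeed the rotation part forms a finite cyclic group $\langle\omega y\mid\omega^N=1\rangle$ (it is finite because $\Gamma\subset\mathrm{stab}$ of a non-constant rational function, which is finite), and any reflection $\alpha_0/y$ in $\Gamma$ together with these rotations generates $\mathrm{D}_\infty(\alpha_0)$, while two reflections $\alpha_0/y,\alpha_1/y$ compose to the rotation $(\alpha_1/\alpha_0)y$, forcing $\alpha_1/\alpha_0$ to be a root of unity, so all reflections share the same $\mathrm{D}_\infty(\alpha)$. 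Hence $\Gamma\subseteq \mathrm{stab}\bigl(\frac{4c(y)^2}{c(y)^2-y}\bigr)\cap\mathrm{D}_\infty(\alpha)=\mathrm{S}(c;\alpha)$ for a suitable $\alpha\in\mathbb{C}^*$. For the reverse inclusion, given $\nu\in\mathrm{S}(c;\alpha)$ one has $\mathrm{BB}(\phi)(\nu(y))=\mathrm{BB}(\phi)(y)$, so $\mathrm{M}_\phi(\nu(y))$ and $\mathrm{M}_\phi(y)$ are conjugate in $\mathrm{PGL}_2(\mathbb{C}(y))$ (same trace-squared-over-determinant, and both are non-diagonalizable of the same $\mathfrak{c}$-type over $\mathbb{C}(y)$), so one can solve for a matrix $\bigl[\begin{smallmatrix}A&B\\C&D\end{smallmatrix}\bigr]\in\mathrm{PGL}_2(\mathbb{C}[y])$ realizing the conjugacy; this produces $\psi\in\mathrm{J}$ with $\pi_2(\psi)=\nu$ and $\psi\phi\psi^{-1}=\phi$, i.e. $\nu\in\Gamma$. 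This realization step — checking that the conjugating matrix can be chosen with polynomial entries and that it indeed lies over $\nu$ — is the one place where a genuine (though short) computation is needed, and I expect it to be the main obstacle: one must verify that the two companion-type matrices $\mathrm{M}_\phi$ and $\mathrm{M}_\phi\circ\nu$, having equal Baum–Bott index, are conjugate over $\mathbb{C}(y)$ and that the conjugacy descends to a Jonquières transformation without introducing spurious denominators (handled, as in \S\ref{Sec:J0}, by a further conjugacy by $(a(y)x,y)$ if necessary, which is harmless since it lies in $\mathrm{J}_y$).

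\medskip

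\textbf{Step (iii): splitting.} We have the exact sequence $1\to\mathrm{J}_y\to\mathrm{C}(\phi)\xrightarrow{\pi_2}\mathrm{S}(c;\alpha)\to 1$. It splits because $\mathrm{S}(c;\alpha)\subseteq\mathrm{D}_\infty(\alpha)$ is generated by $\omega y$'s and an element $\alpha/y$ (or a power thereof), and these lift to the visible linear elements $(x,\omega y)$ and $\bigl(\alpha\,\text{-normalized involution}\bigr)\cdot$ — concretely, writing $\mathrm{S}(c;\alpha)=\overline{\mathrm{stab}}$-type rotations together with involutions of the form $\eta/y$, one checks directly that $(x,\eta y)\cdot$ and $\bigl(\frac{\,y/x\,}{\ },\eta/y\bigr)$-type maps commute with $\phi$ whenever their second components lie in $\mathrm{S}(c;\alpha)$, so the obvious section $\nu\mapsto(x,\nu)$ on rotations and a fixed choice of lift on a generating reflection is a group homomorphism into $\mathrm{C}(\phi)$. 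The conjugation action of this copy of $\mathrm{S}(c;\alpha)$ on $\mathrm{J}_y$ is the natural one ($b(y)\mapsto b(\nu(y))$ up to the sign coming from $x\mapsto \pm$), giving $\mathrm{C}(\phi)=\mathrm{J}_y\rtimes\mathrm{S}(c;\alpha)$, which is the assertion.
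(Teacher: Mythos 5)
Your overall architecture (kernel $=\mathrm{J}_y$, image $=\mathrm{S}(c;\alpha)$, split extension) is the same as the paper's, and Steps (i) and (ii) are essentially sound: the paper's version of Step (i) is the direct computation that a fibre-preserving element of $\mathrm{C}(\phi)$ preserving $\mathcal{C}=\{x^2=y\}$ has $A=D$, $B=Cy$ (the coset $\tau\mathrm{J}_y$ being excluded because $\tau\phi\tau=\phi^{-1}$), and your rational-canonical-form argument for realizing elements of $\mathrm{S}(c;\alpha)$ is a legitimate, more abstract substitute for what the paper does explicitly. The step you flag as the ``main obstacle'' is in fact a non-issue: an element of $\mathrm{PGL}_2(\mathbb{C}(y))$ is represented by a matrix with polynomial entries after clearing denominators, so nothing needs to ``descend''.

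The genuine gap is Step (iii). You assert that $(x,\omega y)$ commutes with $\phi$ whenever $\omega y\in\mathrm{S}(c;\alpha)$ and take $\nu\mapsto(x,\nu)$ as the section on rotations. This is false: writing the commutation of $(x,\omega y)$ with $\phi$ in $\mathrm{PGL}_2(\mathbb{C}(y))$ forces $c(\omega y)=\lambda c(y)$, $\omega y=\lambda y$ and $1=\lambda$, hence $\omega=1$. The whole computational content of the paper's proof of Proposition~\ref{genrenul} lies precisely in producing the correct, twisted lifts: from $\omega y\in\mathrm{stab}\bigl(\frac{4c(y)^2}{c(y)^2-y}\bigr)$ one derives $c(\omega y)^2=\omega\, c(y)^2$, hence a square root $\upsilon$ of $\omega$ with $c(\upsilon^2y)=\upsilon c(y)$, and the lift is $(\upsilon x,\upsilon^2y)$; similarly the lift of $\frac{\alpha}{y}$ is $\left(\frac{\beta x}{y},\frac{\beta^2}{y}\right)$ with $\beta^2=\alpha$ and $c\left(\frac{\beta^2}{y}\right)=\frac{\beta}{y}c(y)$. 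These explicit lifts simultaneously give the surjection onto $\mathrm{S}(c;\alpha)$, reduce every $\psi\in\mathrm{C}(\phi)$ to the kernel case by composing with $\varphi^{-1}$, and (because $\upsilon$, resp.\ $\beta$, is uniquely determined by $\nu$ through $c$) assemble into a homomorphic section. Your abstract conjugating matrices $P$ from Step (ii) are only determined up to the centraliser of $\mathrm{M}_\phi$ in $\mathrm{PGL}_2(\mathbb{C}(y))$, so by themselves they prove surjectivity but not the splitting; as written, Step (iii) replaces the needed construction with an incorrect one.
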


\begin{proof}
Supposons que $\mathrm{S}(c;\alpha)$ soit r\'eduit \`a l'identit\'e pour tout $\alpha$, autrement dit $\pi_2(\mathrm{C}(\phi))=\{\mathrm{id}\}$
 d'apr\`es le Lemme \ref{secondcomp}. En \'ecrivant que les \'el\'ements $\psi$ de $\mathrm{C}(\phi)$ pr\'eservent la courbe $\mathcal{C}$ on 
obtient $A=D$ et~$B(y)=C(y)y$, ce qui signifie que $\mathrm{C}(\phi)=\mathrm{J}_y$ et prouve l'\'enonc\'e dans ce cas.

\noindent Si maintenant $\pi_2(\psi)$ est non trivial pour un certain $\psi$ dans $\mathrm{C}(\phi)$, on sait que $\pi_2(\psi)$ est ou bien du type 
$\frac{\alpha}{y}$, ou bien du type~$\omega y$ avec $\omega$ racine de l'unit\'e (Lemme \ref{secondcomp}). Supposons que $\pi_2(\psi)=\omega y$ 
avec  $\omega$ racine de l'unit\'e. De l'invariance de~$\frac{4c^2}{c^2-y}$ par $\omega y$ on tire $$c(\omega y)^2=\omega c(y)^2.$$ Par suite il 
existe une racine carr\'ee $\upsilon$ de $\omega$, $\upsilon^2=\omega$, telle que $c(\upsilon^2 y)=\upsilon c(y).$ On constate alors que la 
transformation lin\'eaire $\varphi=(\upsilon x,\upsilon^2y)$ est dans $\mathrm{C}(\phi)$. On peut appliquer \`a $\psi\varphi^{-1}$, qui v\'erifie 
$\pi_2(\psi\varphi^{-1})=\mathrm{id}$, le premier argument, de sorte que $\psi\varphi^{-1}$ est dans $\mathrm{J}_y$. Enfin si $\pi_2(\psi)=
\frac{\alpha}{y}$ pour un certain $\alpha$ on proc\`ede comme ci-dessus. On a $c\left(\frac{\alpha}{y}\right)^2=\frac{\alpha}{y^2}c(y)^2$ et il 
existe une racine carr\'ee $\beta$ de $\alpha$, $\beta^2=\alpha$, telle que $c\left(\frac{\beta^2}{y}\right)=~\frac{\beta}{y}c(y)$. On remarque 
cette fois que la transforma\-tion~$\varphi=\left(\frac{\beta x}{y},\frac{\beta^2}{y}\right)$ commute \`a $\phi$; de plus $\psi\varphi^{-1}$ est dans 
$\mathrm{C}(\phi)$ et v\'erifie~$\pi_2(\psi\varphi^{-1})=\mathrm{id}$. Par suite $\psi\varphi^{-1}$ est dans $\mathrm{J}_y$.
\end{proof}

\begin{rem}
Si l'inclusion $\mathrm{J}_y\subset\mathrm{C}(\phi)$ est stricte on peut \'evidemment affiner les formes normales de~$\left(\frac{c(y)x+y}{x+c(y)},y
\right)$ suivant la nature de $\mathrm{S}(c;\alpha)$.
\end{rem}

\section{Centralisateurs des \'el\'ements de $\mathrm{J}$ et \'equations aux diff\'erences}\label{Sec:centralisateursj}

\noindent Soit $\phi$ un \'el\'ement de $\mathrm{J}\setminus\mathrm{J}_0$ qui pr\'eserve une unique fibration rationnelle. Consid\'erons le morphisme 
$$\pi_{2\vert_{\mathrm{C}(\phi)}}\colon\mathrm{C}(\phi)\to\mathrm{PGL}_2(\mathbb{C})$$ qui \`a une transformation $\psi$ de $\mathrm{C}(\phi)$ associe 
la seconde composante $\pi_2(\psi)$ de $\psi$; soit $\mathrm{H}$ le noyau de $\pi_2$. Remarquons que si $\mathrm{H}$ est trivial alors $\mathrm{C}(\phi)$ 
est isomorphe \`a $\pi_2(\mathrm{C}(\phi))$. On peut se ramener par conjugaison \`a $\pi_2(\phi)=y+1$, ou~$\pi_2(\phi)=\alpha y$. Notons que $\pi_2(\mathrm{C}(\phi))$ 
est contenu dans le centralisateur (dans $\mathrm{PGL}_2(\mathbb{C})$) de $\pi_2(\phi)$; ce groupe est ab\'elien, isomorphe \`a $\mathbb{C}$ ou 
$\mathbb{C}^*$, sauf dans le cas $\pi_2(\phi)=-y$ o\`u il est isomorphe \`a $\mathbb{C}^*\rtimes\mathbb{Z}/2\mathbb{Z}$. Nous faisons une 
description au cas par cas dans \S\ref{pasdetorsion} et \S\ref{torsion}.

\subsection{Cas o\`u $\mathrm{H}$ n'est pas de torsion}\label{pasdetorsion}

\subsubsection{\'Etude du cas $\pi_2(\phi)=y+1$}\label{pasdetorsion:transl}

\noindent Soit $\psi$ un \'el\'ement d'ordre infini dans $\mathrm{H}$. Puisque $\phi$ appartient \`a $\mathrm{C}(\psi)$ on peut utiliser la classification 
\'etablie au \S\ref{calculcentralisateur}. Le fait que $\pi_2(\phi)$ soit une translation implique que $\mathrm{im}\,\pi_2$ est non fini et donc $\psi$ est
 n\'ecessairement, \`a conjugaison pr\`es, de l'un des deux types suivants
\begin{align*}
&\psi=(x+1,y), &&\psi=(\alpha x,y)
\end{align*}
$\alpha$ n'\'etant pas une racine de l'unit\'e.
Dans ces deux \'eventualit\'es les calculs de $\mathrm{C}(\psi)$ nous indique que $\phi$ est du type suivant (respectivement):
\begin{align*}
& (x+a(y),y+1), && (b(y)x,y+1).
\end{align*}
Nous allons \'etudier le centralisateur de $\phi$ dans ces deux cas. Rappelons que $\mathrm{C}(\phi)$ est contenu dans $\mathrm{J}$ puisque $\phi$
pr\'eserve une seule fibration rationnelle.

\medskip

\noindent Dans un premier temps consid\'erons la possibilit\'e $\phi=(x+a(y),y+1)$. 

\noindent Commen\c{c}ons par remarquer que si l'\'equation aux diff\'erences $f(y)-f(y+1)=a(y)$ poss\`ede une solution rationnelle, alors $\phi$ 
est conjugu\'ee \`a $(x,y+1)$ qui pr\'eserve plus d'une fibration. 

\noindent Notons que si $\mathrm{C}(\phi)$ contient un \'el\'ement du type $(x+b(y),y)$ alors $b(y+1)=b(y)$, \emph{i.e.} $b$ est constant; 
d'ailleurs tous les $(x+\alpha,y)$ avec $\alpha$ dans $\mathbb{C}$ commutent \`a $\phi$. D\'ecrivons plus g\'en\'eralement les \'el\'ements 
$\varphi=\left(\frac{A(y)x+B(y)}{C(y)x+D(y)},y\right)$ de $\ker\pi_2$. Si~$C=0$, la commutation de $\varphi$ et $\phi$ conduit \`a 
\begin{align*}
& A(y)=A(y+1)\,\,\,\,\,(\star)&& \text{et}&& (A(y)-1)a(y)=B(y)-B(y+1)\,\,\,\,\,(\diamond).
\end{align*}
L'\'egalit\'e $(\star)$ implique que $A$ est constant: $A(y)=\alpha\in\mathbb{C}^*$. Alors $(\diamond)$ se r\'e\'ecrit $(\alpha-1)a(y)=B(y)-
B(y+~1)$. N\'ecessairement $\alpha=1$ (comme on l'a vu l'\'equation aux diff\'erences $f(y)-f(y+1)=a(y)$ n'a pas de solution rationnelle sous 
l'hypoth\`ese $\phi$ pr\'eserve une unique fibration); par suite $B$ est constante, \emph{i.e.} $\varphi=(x+\beta,y)$ avec~$\beta$ dans $\mathbb{C}$. 
Supposons maintenant que $C$ soit non nul; on peut alors se ramener \`a $C=1$. On peut v\'erifier que $\phi\varphi=\varphi\phi$ implique $A(y+1)-
A(y)=a(y)$; il n'y a donc pas d'\'el\'ement de la forme $\left(\frac{A(y)x+B(y)}{x+D(y)},y\right)$ dans~$\ker\pi_{2\vert_{\mathrm{C}(\phi)}}$. Ainsi 
$\ker\pi_{2\vert_{\mathrm{C}(\phi)}}=\big\{(x+\beta,y)\,\vert\,\beta\in\mathbb{C}\big\}$ et on obtient une description de~$\mathrm{C}(\phi)$ via la 
suite exacte $$0\longrightarrow \ker\pi_{2\vert_{\mathrm{C}(\phi)}}\simeq\mathbb{C}\longrightarrow\mathrm{C}(\phi)\longrightarrow\mathrm{im}\,
\pi_{2\vert_{\mathrm{C}(\phi)}}\longrightarrow 0.$$ Comme $\mathrm{im}\,\pi_2\subset\mathbb{C}$, on constate que $[\mathrm{C}(\phi),\mathrm{C}(\phi)]$ est ab\'elien et donc 
$\mathrm{C}(\phi)$ r\'esoluble.

\begin{pro}
Si $\phi=(x+a(y),y+1)$ pr\'eserve une seule fibration, alors $\mathrm{C}(\phi)$ est r\'esoluble m\'etab\'elien. 
\end{pro}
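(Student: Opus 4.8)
The plan is to exploit the structure already established just before the statement. The key facts I would use are: (1) Proposition~\ref{carac} guarantees $\mathrm{C}(\phi)\subset\mathrm{J}$ since $\phi$ preserves a unique rational fibration, hence every $\psi\in\mathrm{C}(\phi)$ has a well-defined second component $\pi_2(\psi)$; (2) the explicit computation carried out above shows $\ker\pi_{2\vert_{\mathrm{C}(\phi)}}\simeq\mathbb{C}$, realized by the translations $(x+\beta,y)$, $\beta\in\mathbb{C}$; and (3) $\pi_2(\mathrm{C}(\phi))$ is contained in the centralizer of $\pi_2(\phi)=y+1$ in $\mathrm{PGL}_2(\mathbb{C})$, which is the abelian group $\{y+\mu\mid\mu\in\mathbb{C}\}\simeq\mathbb{C}$.

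From here the argument is short. First I would record the exact sequence
$$0\longrightarrow\ker\pi_{2\vert_{\mathrm{C}(\phi)}}\longrightarrow\mathrm{C}(\phi)\longrightarrow\mathrm{im}\,\pi_{2\vert_{\mathrm{C}(\phi)}}\longrightarrow 0,$$
in which both the kernel and the image are abelian (isomorphic to subgroups of $\mathbb{C}$). Such an extension of an abelian group by an abelian group is by definition metabelian, i.e. its derived subgroup $[\mathrm{C}(\phi),\mathrm{C}(\phi)]$ lies in the kernel $\ker\pi_{2\vert_{\mathrm{C}(\phi)}}\simeq\mathbb{C}$, which is abelian; hence $[[\mathrm{C}(\phi),\mathrm{C}(\phi)],[\mathrm{C}(\phi),\mathrm{C}(\phi)]]=\{\mathrm{id}\}$ and $\mathrm{C}(\phi)$ is solvable of derived length at most $2$. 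This is precisely the assertion, and in fact it was already noted in the running text immediately above the proposition; the proof merely packages that observation.

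The one point that deserves a line of care — and is the only place where anything could go wrong — is making sure the kernel computation is genuinely complete, i.e. that there are no elements of $\ker\pi_{2\vert_{\mathrm{C}(\phi)}}$ with $C\neq 0$. This was handled above by observing that commutation of $\varphi=\left(\frac{A(y)x+B(y)}{x+D(y)},y\right)$ with $\phi$ would force $A(y+1)-A(y)=a(y)$, which is impossible since the difference equation $f(y)-f(y+1)=a(y)$ has no rational solution under the hypothesis that $\phi$ preserves a unique fibration. So I would simply cite that computation. I expect no real obstacle: the proof is a two-line consequence of the preceding analysis, and the only thing to be vigilant about is invoking the uniqueness-of-fibration hypothesis correctly so that the kernel really is just $\mathbb{C}$ and not something larger.

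\begin{proof}
D'apr\`es la Proposition~\ref{carac}, comme $\phi$ pr\'eserve une unique fibration rationnelle, $\mathrm{C}(\phi)$ est contenu dans $\mathrm{J}$; le morphisme $\pi_{2\vert_{\mathrm{C}(\phi)}}\colon\mathrm{C}(\phi)\to\mathrm{PGL}_2(\mathbb{C})$ est donc bien d\'efini. Le calcul effectu\'e ci-dessus montre que $\ker\pi_{2\vert_{\mathrm{C}(\phi)}}=\big\{(x+\beta,y)\,\vert\,\beta\in\mathbb{C}\big\}\simeq\mathbb{C}$, en particulier ce noyau est ab\'elien. Par ailleurs $\mathrm{im}\,\pi_{2\vert_{\mathrm{C}(\phi)}}$ est contenu dans le centralisateur de $\pi_2(\phi)=y+1$ dans $\mathrm{PGL}_2(\mathbb{C})$, \`a savoir $\{y+\mu\,\vert\,\mu\in\mathbb{C}\}\simeq\mathbb{C}$, donc est ab\'elien. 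La suite exacte
$$0\longrightarrow\ker\pi_{2\vert_{\mathrm{C}(\phi)}}\longrightarrow\mathrm{C}(\phi)\longrightarrow\mathrm{im}\,\pi_{2\vert_{\mathrm{C}(\phi)}}\longrightarrow 0$$
exhibe ainsi $\mathrm{C}(\phi)$ comme extension d'un groupe ab\'elien par un groupe ab\'elien. Il en r\'esulte que $[\mathrm{C}(\phi),\mathrm{C}(\phi)]$ est contenu dans $\ker\pi_{2\vert_{\mathrm{C}(\phi)}}\simeq\mathbb{C}$, qui est ab\'elien; par cons\'equent $[[\mathrm{C}(\phi),\mathrm{C}(\phi)],[\mathrm{C}(\phi),\mathrm{C}(\phi)]]=\{\mathrm{id}\}$ et $\mathrm{C}(\phi)$ est r\'esoluble m\'etab\'elien.
\end{proof}
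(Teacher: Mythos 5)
Your proof is correct and follows exactly the paper's own route: the paper establishes this proposition in the running text immediately preceding it, via the same computation of $\ker\pi_{2\vert_{\mathrm{C}(\phi)}}=\big\{(x+\beta,y)\,\vert\,\beta\in\mathbb{C}\big\}$ (using that the difference equation has no rational solution when the fibration is unique) and the same exact sequence with abelian kernel and abelian image inside the translations. Your write-up merely makes explicit the identification of the centralizer of $y+1$ in $\mathrm{PGL}_2(\mathbb{C})$ with $\{y+\mu\,\vert\,\mu\in\mathbb{C}\}$, which the paper states without detail.
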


\noindent Consid\'erons maintenant l'\'eventualit\'e $\phi=(b(y)x,y+1)$. Remarquons que l'\'equation aux diff\'erences $\frac{f(y+1)}{f(y)}=~b(y)$ n'a 
pas de solution sinon $\phi$ serait conjugu\'e \`a $(x,y+1)$ et poss\`ederait plus d'une fibration invariante. Soit $\varphi$ un \'el\'ement de 
$\ker\pi_2$, il est du type $\left(\frac{A(y)x+B(y)}{C(y)x+D(y)},y\right)$. Si $C=0$, on peut supposer que~$D=~1$; en \'ecrivant que $\varphi$ et 
$\phi$ commutent on obtient 
\begin{align*}
&  A(y+1)=A(y)\,\,\,\,\, (\star) &&\text{et}&& B(y+1)=B(y)b(y)\,\,\,\,\, (\star\star)
\end{align*}
L'\'egalit\'e $(\star)$ implique que $A$ est une constante. Puisque l'\'equation aux diff\'erences $\frac{f(y+1)}{f(y)}=b(y)$ n'a pas de solution, 
$(\star\star)$ entra\^ine que $B=0$. Autrement dit $\varphi$ s'\'ecrit $(\alpha x,y)$. Si $C$ est non nul, on peut se ramener \`a~$C=1$. La 
commutation de $\varphi$ et $\phi$ entra\^ine les \'egalit\'es
\begin{align*}
&  A(y+1)=A(y)b(y) &&\text{et}&& B(y+1)D(y)=b(y)B(y)D(y+1)
\end{align*}
 d'o\`u $A=D=0$. La derni\`ere condition impos\'ee par $\varphi\phi=\phi\varphi$ est $$B(y+1)=b(y)^2B(y)\,\,\,\,\,(\diamond)$$ Il se peut 
que $(\diamond)$ ait une solution par exemple pour $b(y)=-\frac{y+1}{y}$; dans ce cas pr\'ecis $(\deg\phi^n)_n$ n'est pas \`a croissance 
born\'ee. On constate que deux solutions de $(\diamond)$ diff\`erent d'une constante multiplicative. Ainsi g\'en\'eriquement $\ker\pi_{2
\vert_{\mathrm{C}(\phi)}}=\big\{(\beta x,y)\,\vert\, \beta\in\mathbb{C}^*\big\}$ et dans le cas o\`u $(\diamond)$ a une solution $$\ker
\pi_{2\vert_{\mathrm{C}(\phi)}}=\left\langle(\beta x,y),\left(\frac{B(y)}{x},y\right)\,\big\vert\,\beta \in\mathbb{C}^*\right\rangle.$$ On 
obtient encore une description de $\mathrm{C}(\phi)$ via la suite exacte $$0\longrightarrow \ker\pi_{2\vert_{\mathrm{C}(\phi)}}
\longrightarrow\mathrm{C}(\phi)\longrightarrow\mathrm{im}\,\pi_{2\vert_{\mathrm{C}(\phi)}}\longrightarrow 0.$$
 
\subsubsection{\'Etude du cas $\pi_2(\phi)=\beta y$, $\beta$ d'ordre infini}

\noindent Soit $\psi$ un \'el\'ement d'ordre infini dans $\mathrm{H}$. Puisque $\phi$ appartient \`a $\mathrm{C}(\psi)$ on peut utiliser la 
classification \'etablie au \S\ref{calculcentralisateur}. Comme pr\'ec\'edemment on se ram\`ene aux deux \'eventualit\'es:
\begin{align*}
& (x+a(y),\beta y), && (b(y)x,\beta y).
\end{align*}

\medskip

\noindent Dans le cas o\`u $\phi=(x+a(y),\beta y)$, on obtient en utilisant le m\^eme raisonnement que  $$\ker\pi_{2\vert_{\mathrm{C}(\phi)}}=
\big\{(x+\alpha,y)\,\vert\,\alpha\in\mathbb{C}\big\}$$ et on a une description de~$\mathrm{C}(\phi)$ via $$0\longrightarrow \ker\pi_{2\vert_{
\mathrm{C}(\phi)}}\simeq\mathbb{C}\longrightarrow\mathrm{C}(\phi)\longrightarrow\mathrm{im}\,\pi_{2\vert_{\mathrm{C}(\phi)}}\longrightarrow 0.$$ 

\medskip

\noindent Lorsque $\phi=(b(y)x,\beta y)$ alors $\ker\pi_{2\vert\mathrm{C}(\phi)}=\big\{(\alpha x,y)\,\vert\, \alpha\in\mathbb{C}^*\big\}$  
d'o\`u une description de $\mathrm{C}(\phi)$ via la suite exacte $$0\longrightarrow \ker\pi_{2\vert_{\mathrm{C}(\phi)}}\simeq\mathbb{C}^*
\longrightarrow\mathrm{C}(\phi)\longrightarrow\mathrm{im}\,\pi_{2\vert_{\mathrm{C}(\phi)}}\longrightarrow 0.$$ En effet, pour des raisons 
analogues \`a celles \'evoqu\'ees pr\'ec\'edemment, l'\'equation aux diff\'erences $\frac{f(\beta y)}{f(y)}=~b(y)$ n'a pas de solution. 
Soit $\varphi$ un \'el\'ement de $\ker\pi_2$, il est du type $\left(\frac{A(y)x+B(y)}{C(y)x+D(y)},y\right)$. Si $C=0$, alors $\varphi$ 
s'\'ecrit $(\alpha x,y)$. Si $C$ est non nul, on peut se ramener \`a $C=1$. La commutation de $\varphi$ et $\phi$ entra\^ine $A=D=0$. La 
derni\`ere condition impos\'ee par~$\varphi\phi=\phi\varphi$ est $B(\beta y)=b(y)^2B(y)$ mais celle-ci n'est pas compatible avec le fait 
que $\phi$ soit \`a croissance lin\'eaire. En effet, $\phi$ est \`a croissance lin\'eaire si et seulement si $\psi=(b(y)^2x,\beta y)$ l'est 
puisque $$\psi^n(x,y)=\left(\prod_{j=0}^{n-1}b(\beta^jy)^2x,\beta^ny\right)=\left(\left(\prod_{j=0}^{n-1}b(\beta^jy)\right)^2x,\beta^ny\right);$$ 
mais $\displaystyle\prod_{j=0}^{n-1}b(\beta^jy)^2=\frac{B(\beta^{n-1}y)}{B(y)}$ est \`a croissance born\'ee.

\begin{pro}
Si $\phi=(x+a(y),\beta y)$ $($resp. $(b(y)x,\beta y)$$)$ avec $\beta$ d'ordre infini pr\'eserve une seule fibration, alors $\mathrm{C}(\phi)$ est r\'esoluble m\'etab\'elien. 
\end{pro}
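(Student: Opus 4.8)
The plan is to read the conclusion off the short exact sequence already obtained for $\mathrm{C}(\phi)$, the point being that when $\beta$ has infinite order \emph{both} the kernel and the image of $\pi_{2\vert_{\mathrm{C}(\phi)}}$ are abelian. First I would recall that, $\phi$ preserving a single rational fibration, Proposition~\ref{carac} (used exactly as in \S\ref{calculcentralisateur}) gives $\mathrm{C}(\phi)\subset\mathrm{J}$, so the restriction $\pi_{2\vert_{\mathrm{C}(\phi)}}\colon\mathrm{C}(\phi)\to\mathrm{PGL}_2(\mathbb{C})$ is defined and sits in
\[
0\longrightarrow\ker\pi_{2\vert_{\mathrm{C}(\phi)}}\longrightarrow\mathrm{C}(\phi)\longrightarrow\mathrm{im}\,\pi_{2\vert_{\mathrm{C}(\phi)}}\longrightarrow0.
\]
Now $\mathrm{im}\,\pi_{2\vert_{\mathrm{C}(\phi)}}$ lies in the centraliser of $\pi_2(\phi)=\beta y$ in $\mathrm{PGL}_2(\mathbb{C})$; the map $\beta y$ has exactly the two fixed points $0$ and $\infty$, and since $\beta$ has infinite order one has $\beta^2\neq 1$, so no Möbius transformation commuting with $\beta y$ (i.e. of the form $\lambda/y$) can exchange these two points. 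Hence that centraliser is the diagonal torus $\{\lambda y\,\vert\,\lambda\in\mathbb{C}^*\}$, which is abelian, and therefore $[\mathrm{C}(\phi),\mathrm{C}(\phi)]\subset\ker\pi_{2\vert_{\mathrm{C}(\phi)}}$.

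It then remains to invoke the description of $\ker\pi_{2\vert_{\mathrm{C}(\phi)}}$ established just above the statement: it is $\{(x+\alpha,y)\,\vert\,\alpha\in\mathbb{C}\}\simeq\mathbb{C}$ when $\phi=(x+a(y),\beta y)$, and $\{(\alpha x,y)\,\vert\,\alpha\in\mathbb{C}^*\}\simeq\mathbb{C}^*$ when $\phi=(b(y)x,\beta y)$. This is exactly where the hypothesis that $\phi$ preserves a unique fibration enters: the difference equations $f(\beta y)-f(y)=a(y)$, resp. $f(\beta y)/f(y)=b(y)$, have no rational solution, and the relation $B(\beta y)=b(y)^2B(y)$ is incompatible with the linear growth of $\phi$, which discards the potential extra elements $(B(y)/x,y)$. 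In both cases $\ker\pi_{2\vert_{\mathrm{C}(\phi)}}$ is abelian, so $[\mathrm{C}(\phi),\mathrm{C}(\phi)]$, being a subgroup of an abelian group, is abelian; thus $\mathrm{C}(\phi)$ is metabelian, in particular solvable.

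The group-theoretic step is purely formal once one has the two-term filtration $\{1\}\subset\ker\pi_{2\vert_{\mathrm{C}(\phi)}}\subset\mathrm{C}(\phi)$ with abelian bottom term and abelian top quotient, so the only genuine content lies in the kernel computation recalled above; I expect its main obstacle to be the bookkeeping needed to show that an element $\bigl(\tfrac{A(y)x+B(y)}{x+D(y)},y\bigr)$ with non-zero lower-left entry cannot commute with $\phi$ (one is forced to $A=D=0$, then to $B(\beta y)=b(y)^2B(y)$, whose solutions would, via $\psi^n(x,y)=\bigl((\prod_{j=0}^{n-1}b(\beta^j y))^2 x,\beta^n y\bigr)$ and $\prod_{j=0}^{n-1}b(\beta^j y)^2=B(\beta^{n-1}y)/B(y)$, force bounded growth). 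The contrast with $\pi_2(\phi)=y+1$, where for special $b$ the kernel acquires an element $(B(y)/x,y)$ and becomes a non-abelian infinite dihedral-type group, is what makes the assumption ``$\beta$ of infinite order'' do real work here, by forcing $\beta^2\neq1$ and hence abelianness of the image as well.
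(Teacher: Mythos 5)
Your argument is correct and is essentially the paper's own: the authors likewise read metabelianity off the exact sequence $0\to\ker\pi_{2\vert_{\mathrm{C}(\phi)}}\to\mathrm{C}(\phi)\to\mathrm{im}\,\pi_{2\vert_{\mathrm{C}(\phi)}}\to 0$, with the kernel computed to be $\mathbb{C}$ (resp.\ $\mathbb{C}^*$) via the non-solvability of the relevant difference equations and the growth argument killing $(B(y)/x,y)$, and the image abelian because it sits in the centraliser of $\beta y$ in $\mathrm{PGL}_2(\mathbb{C})$, which is the diagonal torus since $\beta\neq -1$. The only cosmetic difference is that the paper deduces $\mathrm{C}(\phi)\subset\mathrm{J}$ directly from the uniqueness of the invariant fibration rather than from Proposition~\ref{carac}, which is stated for elements of $\mathrm{J}_0$.
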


\subsection{Cas o\`u $\mathrm{H}=\ker\pi_{2_{\vert_{\mathrm{C}(\phi)}}}$ de torsion}\label{torsion}  

\noindent Donnons une description des sous-groupes de torsion infinis de~$\mathrm{PGL}_2(\mathbb{C}(y))$, elle d\'ecoule de \cite[Th\'eor\`eme 2]{Bl2}.

\begin{lem}
Les sous-groupes de torsion infinis de $\mathrm{J}_0\simeq\mathrm{PGL}_2(\mathbb{C}(y))$ sont \`a conjugaison pr\`es de la forme 
\begin{equation}\label{gpetorsioninf}
\mathrm{G}_a=\langle\left(\frac{a(y)}{x},y\right),\,(\omega x,y)\,\vert\,\omega\in\Lambda\rangle
\end{equation}
o\`u $a$ d\'esigne un \'el\'ement de $\mathbb{C}(y)$ $($\'eventuellement nul$)$ et $\Lambda$ un sous-groupe infini de racines de l'unit\'e.
\end{lem}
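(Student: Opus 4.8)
The statement asserts that every infinite torsion subgroup $\mathrm{G}$ of $\mathrm{J}_0\simeq\mathrm{PGL}_2(\mathbb{C}(y))$ is, up to conjugacy, of the displayed form $\mathrm{G}_a$. The natural strategy is to invoke \cite[Th\'eor\`eme 2]{Bl2}, which classifies finite (and more generally bounded) subgroups of $\mathrm{PGL}_2(k)$ for $k$ a field, applied to $k=\mathbb{C}(y)$; here the point is to pass from finite subgroups to torsion subgroups. First I would recall that a torsion subgroup of $\mathrm{PGL}_2(k)$ over a field of characteristic zero need not be finite in general, but that over $\mathbb{C}(y)$ one controls it through the structure of finite subgroups of $\mathrm{PGL}_2$ over the algebraic closure $\overline{\mathbb{C}(y)}$: these are cyclic, dihedral, or one of $\mathfrak{A}_4,\mathfrak{S}_4,\mathfrak{A}_5$. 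The key observation is that an \emph{infinite} torsion group cannot be tetrahedral, octahedral, or icosahedral (those are finite), so it must be a union of cyclic or dihedral pieces; a classical lemma then forces an infinite torsion subgroup of $\mathrm{PGL}_2$ over any field to be conjugate (over the algebraic closure) either to a group of the form $\langle\operatorname{diag}\rangle$ (an infinite group of diagonal matrices, i.e.\ fixing two points) or to an infinite dihedral-type group normalising such a torus.

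The main steps, in order, are as follows. (1) Show that an infinite torsion subgroup $\mathrm{G}\subset\mathrm{PGL}_2(\mathbb{C}(y))$ has an abelian torsion subgroup of index at most $2$: this follows because the only non-abelian options among finite subgroups of $\mathrm{PGL}_2$ with unbounded order are the dihedral groups $\mathrm{D}_n$, whose index-$2$ cyclic part is canonical (it is the unique cyclic subgroup of index $2$ for $n\geq 3$), so taking the union over an exhausting chain of finite subgroups of $\mathrm{G}$ yields a well-defined abelian torsion normal subgroup $\mathrm{A}\trianglelefteq\mathrm{G}$ with $[\mathrm{G}:\mathrm{A}]\leq 2$. (2) Diagonalise $\mathrm{A}$: an infinite abelian torsion subgroup of $\mathrm{PGL}_2(k)$ consists of semisimple elements sharing a common pair of fixed points on $\mathbb{P}^1_{\overline k}$, hence after conjugation in $\mathrm{PGL}_2(\overline{\mathbb{C}(y)})$ it is contained in the diagonal torus; one must then check the conjugating element can be taken in $\mathrm{PGL}_2(\mathbb{C}(y))$ itself, which is where \cite{Bl2} (or a direct Galois-descent argument: the fixed-point pair is Galois-stable, hence defined over $\mathbb{C}(y)$ after a possible quadratic twist absorbed into the dihedral case) is used. (3) Identify the diagonal torsion subgroups of $\mathrm{PGL}_2(\mathbb{C}(y))$: a diagonal element $\operatorname{diag}(f(y),1)$ with $f\in\mathbb{C}(y)^*$ is torsion iff $f^N=1$ for some $N$, and the roots of unity in $\mathbb{C}(y)^*$ are exactly the constants $\omega\in\mathbb{C}$ with $\omega^N=1$; hence $\mathrm{A}=\{(\omega x,y)\mid\omega\in\Lambda\}$ for $\Lambda$ an infinite group of roots of unity, giving the "$a=0$, diagonal" part. (4) Handle the index-$2$ extension: an element of $\mathrm{G}\setminus\mathrm{A}$ normalises $\mathrm{A}$, hence swaps the two fixed points $0,\infty$, so it has the form $\left(\frac{a(y)}{x},y\right)$ for some $a\in\mathbb{C}(y)$; being torsion (order $2$ modulo $\mathrm{A}$ times a root of unity) imposes no further constraint beyond $a\in\mathbb{C}(y)$, and one checks that $\langle\left(\frac{a(y)}{x},y\right),(\omega x,y)\mid\omega\in\Lambda\rangle$ is exactly torsion. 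Conjugating by a suitable $(b(y)x,y)$ one may further normalise $a$ if desired, but the stated form already allows arbitrary $a$.

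The main obstacle I expect is step (2), specifically the \emph{descent} issue: a priori the common pair of fixed points of $\mathrm{A}$ lives in $\mathbb{P}^1(\overline{\mathbb{C}(y)})$ and the two points may be conjugate over a quadratic extension $\mathbb{C}(y)(\sqrt{F})$ rather than individually rational. In that case one cannot diagonalise over $\mathbb{C}(y)$, and $\mathrm{A}$ itself is a "twisted torus" — precisely the groups $\mathrm{J}_F$ from \S\ref{Sec:J0}. One must then argue that the torsion elements of such a twisted torus $\mathrm{J}_F$ are themselves finite in number when $F$ is not a square (their eigenvalues $c(y)\pm\sqrt{F(y)}$ can be roots of unity only for finitely many specializations, forcing $c$ small), so a twisted torus contains no \emph{infinite} torsion subgroup; hence the infinite-torsion hypothesis rules out the twisted case and returns us to the split torus, i.e.\ the form \eqref{gpetorsioninf}. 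This dichotomy (split vs.\ twisted, and the non-existence of infinite torsion in the twisted case) is exactly the content one extracts from \cite[Th\'eor\`eme 2]{Bl2}, so in the actual write-up the cleanest route is to quote that theorem's list of infinite torsion subgroups of $\mathrm{PGL}_2(k)$ directly and simply specialise $k=\mathbb{C}(y)$, reducing the proof to the bookkeeping in steps (3)--(4) above.
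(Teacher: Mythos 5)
Your proposal is correct in substance, but you should know that the paper offers essentially no proof of this lemma: it simply declares that the statement ``d\'ecoule de [Bl2, Th\'eor\`eme 2]'' and moves on. So your write-up is a genuine reconstruction rather than a variant of the paper's argument. What you do --- reduce to the classification of finite subgroups of $\mathrm{PGL}_2$ over an algebraically closed field of characteristic zero, extract a normal abelian subgroup of index at most $2$ as a directed union over the (eventually cyclic or dihedral) finite subgroups, then settle the split/twisted dichotomy for the invariant pair of fixed points --- is exactly the content the authors import wholesale from Blanc. The trade-off is the usual one: the citation is shorter, while your argument is self-contained and makes visible where each hypothesis (infinite, torsion, base field $\mathbb{C}(y)$) actually enters.

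Two points need tightening. First, your step (1) silently assumes that $\mathrm{G}$ is the directed union of its finite subgroups; for a torsion subgroup of $\mathrm{PGL}_2(\mathbb{C}(y))$ this is Schur's theorem on torsion linear groups in characteristic zero (local finiteness) and should be invoked explicitly. Second, your reason for discarding the twisted torus $\mathrm{J}_F$ (``eigenvalues can be roots of unity only for finitely many specializations, forcing $c$ small'') is not the right mechanism. The correct, and shorter, argument is: the eigenvalue ratio $\lambda=\frac{c+\sqrt{F}}{c-\sqrt{F}}$ lives in the function field of the curve $x^2=F(y)$, and a root of unity in a function field is a constant $\omega$; if $\omega\neq 1$ then $c=-\sqrt{F}\,\frac{1+\omega}{1-\omega}$, and since $c\in\mathbb{C}(y)$ while $\sqrt{F}\notin\mathbb{C}(y)$ this forces $\omega=-1$ and $c=0$. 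Hence the only nontrivial torsion element of $\mathrm{J}_F$ is the involution $\left(\frac{F(y)}{x},y\right)$, so no twisted torus carries an infinite torsion subgroup --- which is precisely what you need in order to conclude that only the split form \eqref{gpetorsioninf} survives. With these two repairs your steps (3) and (4) are routine and the proof is complete.
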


\begin{pro}
Soit $\phi$ une transformation de $\mathrm{J}\setminus\mathrm{J}_0$ qui pr\'eserve une unique fibration rationnelle. Supposons que le noyau de $\pi_{2\vert_{\mathrm{C}(\phi)}}
\colon\mathrm{C}(\phi)\to\mathrm{PGL}_2(\mathbb{C})$ soit de torsion. Alors  $\ker\pi_2$ est fini et, \`a indice fini pr\`es, $\mathrm{C}(\phi)$ est isomorphe \`a~$\pi_2(
\mathrm{C}(\phi))$ qui est un sous-groupe ab\'elien de $\mathrm{PGL}_2(\mathbb{C})$. 
\end{pro}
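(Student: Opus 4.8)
Le plan est d'\'etablir d'abord que $\mathrm{H}=\ker\pi_{2\vert_{\mathrm{C}(\phi)}}=\mathrm{C}(\phi)\cap\mathrm{J}_0$ est fini, puis que $\nu=\pi_2(\phi)$ est d'ordre infini ; la conclusion de l'\'enonc\'e en d\'ecoulera aussit\^ot. Rappelons que $\mathrm{C}(\phi)$ est contenu dans $\mathrm{J}$, puisque $\phi$ pr\'eserve une unique fibration rationnelle, de sorte que $\pi_{2\vert_{\mathrm{C}(\phi)}}$ est bien d\'efini et que l'on dispose de la suite exacte $1\to\mathrm{H}\to\mathrm{C}(\phi)\to\pi_2(\mathrm{C}(\phi))\to 1$.

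Pour la finitude de $\mathrm{H}$, on raisonnerait par l'absurde en le supposant infini. C'est alors un sous-groupe de torsion infini de $\mathrm{J}_0\simeq\mathrm{PGL}_2(\mathbb{C}(y))$ ; d'apr\`es le lemme pr\'ec\'edent, et quitte \`a conjuguer par un \'el\'ement de $\mathrm{J}_0$ (ce qui ne modifie ni $\nu$, ni le fait que $\phi$ pr\'eserve une unique fibration), $\mathrm{H}$ contient un sous-groupe de la forme $\big\{(\omega x,y)\,\vert\,\omega\in\Lambda\big\}$, o\`u $\Lambda$ d\'esigne un groupe infini de racines de l'unit\'e. \'Ecrivons la premi\`ere composante de $\phi$ sous la forme $\frac{A(y)x+B(y)}{C(y)x+D(y)}$, de matrice $N(y)=\left[\begin{array}{cc}A & B\\ C & D\end{array}\right]\in\mathrm{PGL}_2(\mathbb{C}(y))$ ; la commutation de $\phi$ et de $(\omega x,y)$ \'equivaut \`a l'\'egalit\'e $N(y)\left[\begin{array}{cc}\omega & 0\\ 0 & 1\end{array}\right]=\left[\begin{array}{cc}\omega & 0\\ 0 & 1\end{array}\right]N(y)$ dans $\mathrm{PGL}_2(\mathbb{C}(y))$. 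En prenant $\omega\in\Lambda$ avec $\omega^2\not=1$ (ce qui est possible car $\Lambda$ est infini), on en d\'eduit que $N(y)$ centralise $\mathrm{diag}(\omega,1)$, donc est diagonale : autrement dit $\phi=(b(y)x,\nu(y))$ avec $b\in\mathbb{C}(y)^*$. Mais alors toute transformation $(\beta x,y)$, $\beta\in\mathbb{C}^*$, commute \`a $\phi$, de sorte que $\mathrm{H}$ contiendrait le groupe $\big\{(\beta x,y)\,\vert\,\beta\in\mathbb{C}^*\big\}\simeq\mathbb{C}^*$, qui n'est pas de torsion : contradiction. Cette \'etape devrait concentrer l'essentiel de la difficult\'e, puisqu'elle combine la classification des sous-groupes de torsion infinis de $\mathrm{PGL}_2(\mathbb{C}(y))$ et l'\'etude des homographies induites dans les fibres.

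Il reste \`a voir que $\nu$ est d'ordre infini. Si $\nu$ \'etait d'ordre fini $k$, on aurait $\phi^k\in\mathrm{J}_0\cap\mathrm{C}(\phi)=\mathrm{H}$, fini d'apr\`es ce qui pr\'ec\`ede, donc $\phi^k$, et par cons\'equent $\phi$, serait p\'eriodique ; or une transformation p\'eriodique est \`a croissance born\'ee et pr\'eserve une infinit\'e de fibrations rationnelles (cf. l'introduction et \cite{DiFa}), ce qui contredit l'hypoth\`ese. Ainsi $\nu$ est d'ordre infini et, n'\'etant pas l'identit\'e, est conjugu\'e dans $\mathrm{PGL}_2(\mathbb{C})$ \`a $y+1$ ou \`a $\alpha y$ avec $\alpha$ non racine de l'unit\'e. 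Dans les deux cas le centralisateur de $\nu$ dans $\mathrm{PGL}_2(\mathbb{C})$ est ab\'elien, isomorphe \`a $(\mathbb{C},+)$ ou \`a $\mathbb{C}^*$ (en particulier le cas exceptionnel $\nu=-y$ ne se produit pas) ; comme $\pi_2(\mathrm{C}(\phi))$ y est contenu, il est ab\'elien. Enfin, $\mathrm{H}$ \'etant fini, la suite exacte $1\to\mathrm{H}\to\mathrm{C}(\phi)\to\pi_2(\mathrm{C}(\phi))\to 1$ montre que $\mathrm{C}(\phi)$ est, \`a indice fini pr\`es, isomorphe \`a $\pi_2(\mathrm{C}(\phi))$, ce qui ach\`evera la d\'emonstration.
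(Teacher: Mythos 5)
Your proof is correct and follows essentially the same route as the paper: assume $\mathrm{H}$ is infinite torsion, use the preceding lemma to reduce to the case where $\mathrm{H}$ contains $\big\{(\omega x,y)\mid\omega\in\Lambda\big\}$ with $\Lambda$ infinite, and conclude that $\phi$ must then commute with every $(\alpha x,y)$, $\alpha\in\mathbb{C}^*$, contradicting the torsion hypothesis. The only differences are cosmetic: where the paper passes to the Zariski closure of $\Lambda$ in $\mathbb{C}^*$, you justify the same step by the explicit matrix computation forcing $\phi=(b(y)x,\nu(y))$, and your closing paragraph on the infinite order of $\pi_2(\phi)$ and the abelianity of $\pi_2(\mathrm{C}(\phi))$ simply makes explicit what the paper delegates to the preamble of Section 5.
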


\begin{proof}
Raisonnons par l'absurde: supposons que $\mathrm{H}=\ker\pi_2$ soit infini de torsion c'est-\`a-dire du type~$(\ref{gpetorsioninf})$. En passant \`a l'adh\'erence de Zariski 
on constate que $\phi$ commute aussi aux \'el\'ements du type $(\alpha x,y)$ o\`u $\alpha$ d\'esigne un \'el\'ement quelconque de $\mathbb{C}^*$: contradiction avec 
l'hypoth\`ese selon laquelle $\mathrm{H}$ de torsion.
\end{proof}

\subsection{Exemples}

\noindent Consid\'erons la transformation $\theta_1$ donn\'ee par $\theta_1=\left(x+\frac{1}{y},y+1\right)$. On remarque que $$\theta_1^n=\left(x
+\frac{1}{y}+\frac{1}{y+1}+\ldots+\frac{1}{y+n-1},y+n\right)$$ de sorte que la suite $(\deg\theta_1^n)_n$ n'est pas born\'ee. Il en r\'esulte que 
l'\'equation aux diff\'erences $f(y+1)-f(y)=\frac{1}{y}$ n'a pas de solution rationnelle. Comme on l'a vu l'image de $\pi_2$ est dans le groupe des 
translations $y+\tau$ et son noyau se r\'eduit \`a $\big\{(x+\alpha,y)\,\vert\,\alpha\in\mathbb{C}\big\}$ (\emph{voir} \S\ref{pasdetorsion:transl}). 

\noindent Consid\'erons maintenant un \'el\'ement g\'en\'eral $\psi=\left(\frac{A(y)x+B(y)}{C(y)x+D(y)},y+\tau\right)$ de $\mathrm{C}(\theta_1)$; 
on est ramen\'e \`a \'etudier les deux possibilit\'es suivantes
\begin{align*}
& (A(y)x+B(y),y+\tau)\,\, \,\,\,(\star), && \left(\frac{A(y)x+B(y)}{x+D(y)},y+\tau\right)\,\,\,\,\,(\diamond).
\end{align*}

\noindent On commence par l'\'eventualit\'e $(\star)$; la commutation s'\'ecrit alors $$A(y+1)\left(x+\frac{1}{y}\right)+B(y+1)=A(y)x+B(y)+\frac{1}
{y+\tau}$$ d'o\`u $A(y+1)=A(y)$ qui implique que $A$ est une constante $a\in\mathbb{C}$ et $$B(y+1)-B(y)=\frac{1}{y+\tau}-\frac{a}{y}\,\,\,\,\,(\star
\star).$$ Notons que la somme des r\'esidus de $B(y+1)-B(y)$ est nulle de sorte que $a$ vaut $1$. Supposons que $\tau$ ne soit pas entier. Alors la 
transformation $F=\left(x+\frac{1}{y+\tau}-\frac{1}{y},y+1\right)$ v\'erifie $$F^n=\left(x+\frac{1}{y+\tau}+\frac{1}{y+\tau+1}+\ldots+\frac{1}{y+n-1
+\tau}-\left(\frac{1}{y}+\ldots+\frac{1}{y+n-1}\right),y+n\right);$$ ainsi la suite $(\deg F^n)_n$ est non born\'ee. Ceci implique qu'il n'y a pas 
de solution \`a l'\'equation aux diff\'erences~$(\star\star)$. En effet si $B$ est solution de $(\star\star)$ la transformation $(x+B(y),y)$ conjugue 
$F$ \`a $(x,y+1)$ dont la suite des degr\'es des it\'er\'es est \`a croissance born\'ee. Il en r\'esulte que $\tau$ appartient \`a $\mathbb{Z}$. En 
composant $\psi$ par un it\'er\'e convenable de $\theta_1$ on se ram\`ene \`a un \'el\'ement de $\ker\pi_2$ ce qui termine le cas $(\star)$.

\noindent Pour $(\diamond)$ la commutation se traduit par $$\frac{A(y+1)\left(x+\frac{1}{y}\right)+B(y+1)}{x+\frac{1}{y}+D(y+a)}=\frac{A(y)x+B(y)}{x+
D(y)}+\frac{1}{y+\tau} $$ soit encore par $$\left(A(y+1)\left(x+\frac{1}{y}\right)+B(y+1)\right)\big(x+D(y)\big)\big(y+\tau\big)=\Big((A(y)x+B(y))(y+
\tau)+x+D(y)\Big)\left(x+\frac{1}{y}+D(y+1)\right).$$ En particulier $A(y+1)-A(y)=\frac{1}{y+\tau}$; comme pr\'ec\'edemment cette \'equation aux 
diff\'erences n'a pas de solution: un argument de croissance des degr\'es assure que $\left(x+\frac{1}{y+\tau},\frac{1}{y+1}\right)$ n'est pas conjugu\'e 
\`a $\left(x,\frac{1}{y+1}\right)$.

\noindent Ceci montre que $\mathrm{C}(\theta_1)$ est engendr\'e par les $(x+\alpha,y)$ et les it\'er\'es de $\theta_1$.

\noindent L'exemple que l'on vient d'\'etudier fait partie d'une famille de transformations birationnelles pour lesquelles on peut d\'ecrire les centralisateurs.

\begin{pro}
Consid\'erons la famille $(\theta_\alpha)_{\alpha\in\mathbb{C}^*}$ de transformations birationnelles donn\'ee par $\theta_\alpha=\left(\alpha x+\frac{1}{y},y+1\right)$.

\noindent Le centralisateur $\mathrm{C}(\theta_1)$ de $\theta_1$ est le groupe engendr\'e par $(x+a,y)$, $a$ appartenant \`a $\mathbb{C}$, et les it\'er\'es de $\theta_1$; 
c'est un groupe ab\'elien isomorphe \`a $\mathbb{C}\times \mathbb{Z}$.

\noindent Lorsque $\alpha$ est diff\'erent de $1$, on a $\mathrm{C}(\theta_\alpha)=\langle\theta_\alpha^n\,\vert\,n\in\mathbb{Z}\rangle\simeq\mathbb{Z}$.
\end{pro}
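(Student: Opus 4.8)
The case $\alpha=1$ is a direct consequence of the computations just carried out above for $\theta_1$. We have seen that $\ker\pi_{2|_{\mathrm{C}(\theta_1)}}=\{(x+a,y)\mid a\in\mathbb{C}\}$ and, treating the two cases $(\star)$ and $(\diamond)$, that every $\psi\in\mathrm{C}(\theta_1)$ satisfies $\pi_2(\psi)=y+\tau$ with $\tau\in\mathbb{Z}$; composing such a $\psi$ with $\theta_1^{-\tau}$ produces an element of $\ker\pi_2$. Hence $\mathrm{C}(\theta_1)$ is generated by the $(x+a,y)$, $a\in\mathbb{C}$, together with $\theta_1$. A one-line computation shows $\theta_1\,(x+a,y)\,\theta_1^{-1}=(x+a,y)$; so these translations are central, they commute with one another, and $\langle\theta_1\rangle$ meets $\{(x+a,y)\}$ only in the identity (the second coordinates differ). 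Therefore $\mathrm{C}(\theta_1)\simeq\mathbb{C}\times\mathbb{Z}$ and, in particular, it is abelian.

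Now fix $\alpha\neq 1$. From $\theta_\alpha^{\,n}=\bigl(\alpha^{n}x+\sum_{j=0}^{n-1}\frac{\alpha^{\,n-1-j}}{y+j},\,y+n\bigr)$ and the fact that $\sum_{j=0}^{n-1}\frac{\alpha^{\,n-1-j}}{y+j}$ has $n$ distinct simple poles, the sequence $(\deg\theta_\alpha^{\,n})_n$ is unbounded; so $\theta_\alpha$ has infinite order, is of linear growth, and by the dichotomy recalled in the introduction preserves a unique rational fibration. In particular $\mathrm{C}(\theta_\alpha)\subset\mathrm{J}$. Since $\pi_2(\theta_\alpha)=y+1$ has infinite order in $\mathrm{PGL}_2(\mathbb{C})$, every $\psi\in\mathrm{C}(\theta_\alpha)$ has $\pi_2(\psi)$ commuting with $y+1$, i.e. $\pi_2(\psi)=y+\tau$ for some $\tau\in\mathbb{C}$. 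It therefore suffices to prove the two assertions
\begin{itemize}
\item[(i)] $\ker\pi_{2|_{\mathrm{C}(\theta_\alpha)}}=\{\mathrm{id}\}$;
\item[(ii)] every $\psi\in\mathrm{C}(\theta_\alpha)$ satisfies $\tau\in\mathbb{Z}$;
\end{itemize}
for then $\psi\,\theta_\alpha^{-\tau}\in\ker\pi_{2|_{\mathrm{C}(\theta_\alpha)}}=\{\mathrm{id}\}$, so $\psi=\theta_\alpha^{\tau}$, whence $\mathrm{C}(\theta_\alpha)=\langle\theta_\alpha\rangle\simeq\mathbb{Z}$.

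To prove (i) and (ii) I write $\psi=\bigl(\frac{A(y)x+B(y)}{C(y)x+D(y)},\,y+\tau\bigr)$ and translate $\psi\theta_\alpha=\theta_\alpha\psi$ into the identity $N(y+\tau)M_\psi(y)=\nu(y)\,M_\psi(y+1)\,N(y)$ in $\mathrm{PGL}_2(\mathbb{C}(y))$, where $M_\psi=\left(\begin{smallmatrix}A&B\\C&D\end{smallmatrix}\right)$ and $N(y)=\left(\begin{smallmatrix}\alpha&1/y\\0&1\end{smallmatrix}\right)$ is the matrix through which $\theta_\alpha$ acts on $x$. Suppose first $C\not\equiv0$: dividing $M_\psi$ by $C$ and comparing entries (the lower-left entry forces $\nu\equiv1/\alpha$) one obtains, for $\widetilde A=A/C$, the equation $\widetilde A(y+1)=\alpha\widetilde A(y)+\frac{1}{y+\tau}$. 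This has no rational solution: a polynomial solution is impossible for degree reasons, and if $\widetilde A$ had poles, choosing a pole $\beta$ of maximal real part and a pole $\beta'$ of minimal real part, the pole of $\widetilde A(y+1)-\alpha\widetilde A(y)$ at $\beta$ comes only from $-\alpha\widetilde A(y)$ and the one at $\beta'-1$ comes only from $\widetilde A(y+1)$, forcing $\beta=-\tau=\beta'-1$, hence $\mathrm{Re}\,\beta'>\mathrm{Re}\,\beta$, a contradiction. So $C\equiv0$, $\psi=(Ax+B(y),y+\tau)$ with $A\in\mathbb{C}^{*}$ (birationality), and comparing entries now forces $A$ to be a constant and $B(y+1)-\alpha B(y)=\frac{1}{y+\tau}-\frac{A}{y}$.

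For $\tau=0$ the same extremal-pole argument applied to $B(y+1)-\alpha B(y)=\frac{1-A}{y}$ forces $A=1$, and then $B(y+1)=\alpha B(y)$ has $B=0$ as its only rational solution (the leading term of $B$ at infinity is shift-invariant, so $\alpha=1$ unless $B=0$); this gives (i). For arbitrary $\tau$, suppose $B$ were a rational solution of $B(y+1)-\alpha B(y)=\frac{1}{y+\tau}-\frac{A}{y}$. Then $(x+B(y),y)$ conjugates the birational map $G=\bigl(\alpha x+\frac{1}{y+\tau}-\frac{A}{y},\,y+1\bigr)$ to the degree-one map $(\alpha x,y+1)$; but when $\tau\notin\mathbb{Z}$ the two families of poles $\{-j-\tau\}$ and $\{-j\}$ occurring in the rational part of $G^{n}$ are disjoint with all residues nonzero, so $(\deg G^{n})_n$ is unbounded — impossible, since boundedness of the degree sequence is a conjugacy invariant. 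Hence $\tau\in\mathbb{Z}$, which is (ii), and the proposition follows. The main point throughout is the non-solvability of these rational difference equations: the extremal-real-part argument (for (i) and for the step $C\neq0$) and the degree-growth / conjugacy-invariance argument (for (ii)) carry the weight of the proof.
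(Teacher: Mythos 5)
Your proof is correct and follows essentially the same route as the paper's: reduce to $\mathrm{C}(\theta_\alpha)\subset\mathrm{J}$, split on $C=0$ versus $C\neq 0$, and kill the resulting difference equations $A(y+1)-\alpha A(y)=\frac{\alpha}{y+\tau}$ and $B(y+1)-\alpha B(y)=\frac{1}{y+\tau}-\frac{a}{y}$ by examining poles and by the degree-growth/conjugacy-invariance argument. Your only (harmless) variation is to run a single extremal-real-part pole argument uniformly in $\tau$ where the paper first extracts $\tau\in\mathbb{Z}$, normalises to $\tau=0$ by composing with an iterate, and then invokes the degree-growth obstruction --- both tools the paper itself uses interchangeably.
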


\begin{proof}
L'\'eventualit\'e $\alpha=1$ ayant d\'ej\`a \'et\'e trait\'ee nous allons supposer que $\alpha\not=1$.
On v\'erifie, comme on l'a fait pour $\alpha=1$, que la croissance des degr\'es de $\theta_\alpha$ n'est pas born\'ee. Soit $\psi=\left(\frac{A(y)x+B(y)}{C(y)x+D(y)},y+
\tau\right)$ un \'el\'ement de $\mathrm{C}(\theta_\alpha)$. Comme d'habitude on peut supposer que $C$ vaut $0$ ou $1$. 

\noindent Commen\c{c}ons par examiner la possibilit\'e $C=1$; les conditions de commutation impliquent que $$A(y+1)-\alpha A(y)=\frac{\alpha}{y+\tau}.$$ En examinant les 
p\^oles de la diff\'erence $A(y+1)-\alpha A(y)$ on constate que $\tau$ est un entier $n\in\mathbb{Z}$. Quitte \`a composer $\psi$ par~$\theta_\alpha^{-n}$ on se ram\`ene au
 cas $n=0$. Mais si $$A(y+1)-\alpha A(y)=\frac{\alpha}{y}$$ poss\`ede une solution alors $\theta_\alpha$ est conjugu\'e \`a $(\alpha x,y+1)$ dont la croissance des degr\'es 
est born\'ee. 

\noindent Reste \`a examiner la possibilit\'e $\psi=(A(y)x+B(y),y+\tau)$ que l'on rencontre bien s\^ur au travers des it\'er\'es de~$\theta_\alpha$. La commutation de $\psi$ et 
$\theta_\alpha$ indique que $A(y+1)=A(y)$, d'o\`u $A(y)=a\in\mathbb{C}^*$, et produit l'\'equation aux diff\'erences $$B(y+1)-\alpha B(y)=\frac{1}{y+\tau}-\frac{a}{y}.$$
 En examinant les p\^oles des membres de cette \'egalit\'e on constate que $\tau$ est un entier. Par suite en composant par un it\'er\'e ad-hoc de $\theta_\alpha$ on se ram\`ene
 \`a $\psi=(ax+B(y),y)$, $a\in\mathbb{C}^*$, et $B$ v\'erifie $$B(y+1)-\alpha B(y)=\frac{1-a}{y}\,\,\,\,\,(\star).$$ Si $a=1$ on constate que la seule solution rationnelle de 
$(\star)$ est la solution nulle ce qui conduit \`a $\psi=\mathrm{id}$; enfin si $a\not=1$ l'\'equation $(\star)$ ne poss\`ede pas de solution rationnelle, ceci r\'esulte l\`a 
encore d'un examen des p\^oles (ou encore d'un argument d\'ej\`a rencontr\'e de croissance des degr\'es).
\end{proof}

\bigskip

\noindent Reprenons l'exemple suivant \'evoqu\'e dans l'introduction. Soit $(f_{\alpha,\beta})_{\alpha,\beta}$ 
la famille de transformations birationnelles donn\'ee dans la carte affine $z=1$ par 
\begin{align*} 
&\left(\frac{\alpha x+y}{x+1},\beta y\right), && \alpha,\,\beta\in\mathbb{C}^*.
\end{align*} 

\begin{pro}[\cite{De2}, Lemme 1.4, Th\'eor\`eme 1.6]
Pour $\alpha$, $\beta$ g\'en\'eriques, la transformation $f_{\alpha,\beta}$ est \`a croissance lin\'eaire et poss\`ede un centralisateur d\'enombrable; plus pr\'ecis\'ement 
$\mathrm{C}(f_{\alpha,\beta})$ est constitu\'e des puissances de $f_{\alpha,\beta}$.
\end{pro}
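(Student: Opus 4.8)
The plan is to place $f_{\alpha,\beta}$ in the setting of \S\ref{Sec:centralisateursj}: c'est un \'el\'ement de $\mathrm{J}$ avec $\pi_2(f_{\alpha,\beta})=\beta y$, que l'on suppose d\'esormais d'ordre infini (condition g\'en\'erique sur $\beta$). First I would establish que $\deg f_{\alpha,\beta}^n$ cro\^it lin\'eairement pour $(\alpha,\beta)$ g\'en\'eriques: en homog\'en\'eisant, $f_{\alpha,\beta}=(z(\alpha x+y):\beta y(x+z):z(x+z))$ est de degr\'e $2$, et une r\'ecurrence sur les points base rappel\'es plus haut montre qu'aucune chute de degr\'e ne se produit pour des param\`etres g\'en\'eriques (lorsque $\beta$ est racine de l'unit\'e, d'ordre $k$, on peut aussi appliquer le crit\`ere de Baum-Bott, Th\'eor\`eme \ref{cri}, \`a $f_{\alpha,\beta}^k\in\mathrm{J}_0$, gr\^ace \`a la remarque qui suit le Th\'eor\`eme \ref{cri}). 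D'apr\`es l'alternative de \cite{DiFa} rappel\'ee dans l'introduction, la croissance lin\'eaire entra\^ine que $f_{\alpha,\beta}$ pr\'eserve une \emph{unique} fibration rationnelle, \`a savoir $y=$ cte; par suite $\mathrm{C}(f_{\alpha,\beta})\subset\mathrm{J}$ et l'on dispose de la suite exacte $$0\longrightarrow\mathrm{H}\longrightarrow\mathrm{C}(f_{\alpha,\beta})\longrightarrow\pi_2(\mathrm{C}(f_{\alpha,\beta}))\longrightarrow 0$$ o\`u $\mathrm{H}=\ker\pi_{2\vert_{\mathrm{C}(f_{\alpha,\beta})}}$, le groupe $\pi_2(\mathrm{C}(f_{\alpha,\beta}))$ \'etant contenu dans le centralisateur de $\beta y$ dans $\mathrm{PGL}_2(\mathbb{C})$.

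Next I would check que $\mathrm{H}$ ne contient aucun \'el\'ement d'ordre infini. Sinon, l'analyse du \S\ref{pasdetorsion} (cas $\pi_2(\phi)=\beta y$) montrerait que $f_{\alpha,\beta}$ est conjugu\'ee, dans $\mathrm{J}$, \`a $(x+a(y),\beta y)$ ou \`a $(b(y)x,\beta y)$; or ces deux mod\`eles poss\`edent une section rationnelle invariante transverse \`a la fibration ($\{x=\infty\}$, resp. $\{x=0\}\cup\{x=\infty\}$), tandis que $f_{\alpha,\beta}$ n'en poss\`ede pas: une telle section $\{x=h(y)\}$ v\'erifierait $h(\beta y)=\frac{\alpha h(y)+y}{h(y)+1}$, \'equation qui, quels que soient $\alpha$ et $\beta$, n'a aucune solution rationnelle (un simple argument de degr\'es sur $h=P/Q$). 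Donc $\mathrm{H}$ est de torsion, et la proposition du \S\ref{torsion} assure que $\mathrm{H}$ est fini et que, \`a indice fini pr\`es, $\mathrm{C}(f_{\alpha,\beta})$ est isomorphe au sous-groupe ab\'elien $\pi_2(\mathrm{C}(f_{\alpha,\beta}))$ de $\mathrm{PGL}_2(\mathbb{C})$. Il reste alors \`a montrer que, pour $(\alpha,\beta)$ g\'en\'eriques, $\pi_2(\mathrm{C}(f_{\alpha,\beta}))$ se r\'eduit \`a $\langle\beta\rangle$ et $\mathrm{H}$ \`a $\{\mathrm{id}\}$.

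Pour cela je reprendrais le sch\'ema de calcul du \S\ref{pasdetorsion} et de l'exemple des $\theta_\alpha$ qui pr\'ec\`ede. Soit $\psi=\left(\frac{A(y)x+B(y)}{C(y)x+D(y)},\gamma y\right)$ un \'el\'ement de $\mathrm{C}(f_{\alpha,\beta})$, avec $\gamma\in\mathbb{C}^*$; apr\`es les simplifications usuelles on peut supposer $C$ \'egal \`a $0$ ou \`a $1$. L'\'egalit\'e $f_{\alpha,\beta}\psi=\psi f_{\alpha,\beta}$ fournit un syst\`eme d'\'equations aux diff\'erences reliant $A,B,C,D$ \'evalu\'es en $y$ et en $\gamma y$, avec les d\'ecalages induits par $\alpha$ et $\beta$. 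Un premier examen des p\^oles de ces \'egalit\'es force $\gamma$ \`a \^etre de la forme $\beta^n$, $n\in\mathbb{Z}$ (faute de quoi les ensembles de p\^oles des deux membres seraient incompatibles); en rempla\c{c}ant $\psi$ par $\psi f_{\alpha,\beta}^{-n}$ on se ram\`ene \`a $\gamma=1$, \emph{i.e.} $\psi\in\mathrm{H}$. Un second examen, de m\^eme nature, des \'equations aux diff\'erences r\'esiduelles (du genre $g(y+1)-g(y)=R(y)$ ou $g(y+1)=b(y)g(y)$) montre alors que, en dehors d'une partie d\'enombrable de param\`etres, la seule solution rationnelle est la solution triviale, donc $\psi=\mathrm{id}$. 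On conclut que $\mathrm{C}(f_{\alpha,\beta})=\langle f_{\alpha,\beta}^n\,\vert\,n\in\mathbb{Z}\rangle\simeq\mathbb{Z}$ est d\'enombrable, le lieu exceptionnel \'etant la r\'eunion (d\'enombrable) des param\`etres o\`u le degr\'e chute, o\`u $\beta$ est racine de l'unit\'e, et o\`u l'une des \'equations aux diff\'erences ci-dessus poss\`ede une solution rationnelle (voir \cite{De2} pour les d\'etails).

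L'obstacle principal est pr\'ecis\'ement ce dernier point: il faut mener au cas par cas la discussion du syst\`eme d'\'equations aux diff\'erences (selon que $C\equiv 0$, que $A\equiv D\equiv 0$, etc.) et, dans chaque sous-cas, \'etablir qu'en dehors d'un ensemble d\'enombrable de param\`etres l'\'equation correspondante n'a pas de solution rationnelle, tout en contr\^olant pr\'ecis\'ement les valeurs exceptionnelles. C'est l'analogue \og tordu\fg\ et non autonome des calculs de type Baum-Bott disponibles dans $\mathrm{J}_0$: la pr\'esence du d\'ecalage $y\mapsto\beta y$ interdit ici d'invoquer directement le Th\'eor\`eme \ref{cri} ou la Proposition \ref{genrenul}, qui ne concernent que les transformations fibre \`a fibre.
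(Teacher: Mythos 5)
Your route is genuinely different from the paper's. The paper (following \cite{De2}) argues dynamically: the indeterminacy point $p=(-1:\alpha:1)$ is blown up onto a fibre, its positive orbit consists of fibres, any $\psi$ commuting with $f_{\alpha,\beta}$ must, after composition with a suitable iterate, permute $\mathrm{Ind}\,f_{\alpha,\beta}$ and in fact fix $p$, and for generic $(\alpha,\beta)$ the negative orbit of $p$ is Zariski dense, forcing $\widetilde{\psi}=\mathrm{id}$. You instead push $f_{\alpha,\beta}$ through the machinery of \S\ref{calculcentralisateur}--\S\ref{Sec:centralisateursj}. Your intermediate step is correct and genuinely useful: if $\mathrm{H}$ contained an element of infinite order, the classification would make $f_{\alpha,\beta}$ conjugate \emph{in} $\mathrm{J}_0$ to $(x+a(y),\beta y)$ or $(b(y)x,\beta y)$, hence give it an invariant rational section $x=h(y)$, and the equation $h(\beta y)=\frac{\alpha h(y)+y}{h(y)+1}$ indeed has no solution in $\mathbb{C}(y)\cup\{\infty\}$ (compare the values of the two sides at $y=\infty$: whichever of the cases $h(\infty)\in\mathbb{C}\setminus\{-1\}$, $h(\infty)=-1$, $h(\infty)=\infty$ occurs, one side is finite and the other infinite). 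Combined with the proposition of \S\ref{torsion}, this correctly yields $\mathrm{H}$ finite and $\mathrm{C}(f_{\alpha,\beta})/\mathrm{H}$ abelian.

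There remain two genuine gaps. First, your justification of the linear growth is backwards: \og aucune chute de degr\'e\fg\ would give $\deg f_{\alpha,\beta}^n=2^n$, i.e. exponential growth; linear growth requires a controlled drop at each step. Moreover Th\'eor\`eme \ref{cri} and the remark following it only cover $\phi\in\mathrm{J}_0$ or $\pi_2(\phi)$ periodic, so they say nothing when $\beta$ has infinite order; one must either compute directly (the matrices $M(\beta^jy)$ multiply up with entry degrees growing linearly and determinant $\prod_j(\alpha-\beta^jy)$ squarefree, so no cancellation occurs and $\deg f_{\alpha,\beta}^n$ is unbounded) or quote \cite{De2}. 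Second, and more seriously, the actual content of the proposition --- that $\pi_2(\mathrm{C}(f_{\alpha,\beta}))$ is the cyclic group $\langle\beta\rangle$ rather than an uncountable subgroup of $\mathbb{C}^*$, and that $\mathrm{H}$ is trivial --- is exactly the step you leave as an \og obstacle\fg. What you have established up to that point only shows that $\mathrm{C}(f_{\alpha,\beta})$ is virtually abelian, which does not even give countability. Nor is the announced \og examen des p\^oles\fg\ routine: the commutation reads $M_\psi(\beta y)M_f(y)=M_f(\gamma y)M_\psi(y)$ in $\mathrm{PGL}_2(\mathbb{C}(y))$, which is a twisted relation and not a conjugation, so neither the Baum--Bott invariant nor the arguments of Lemme \ref{secondcomp} are directly available. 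This is precisely the work that the paper's orbit argument (Zariski density of the negative orbit of $p$ for generic parameters) performs and that your scheme postpones.
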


\noindent L'id\'ee de la d\'emonstration est la suivante: le point $p=(-1:\alpha:1)$ est \og envoy\'e\fg\, par $f_{\alpha,\beta}$ sur une fibre de la fibration $y=$ cte et 
l'orbite positive de $p$ est constitu\'ee de fibres de cette fibration. Soit $\psi$ une transformation birationnelle qui commute \`a~$f_{\alpha,\beta}$; puisque $\psi$ 
contracte un nombre fini de courbes il existe un entier $k$ positif (que l'on choisit minimal) tel que $f_{\alpha,\beta}^k(p)$ ne soit pas contract\'ee par $\psi$. Quitte 
\`a remplacer $\psi$ par $\widetilde{\psi}:=\psi f_{\alpha,\beta}^{k-1}$ on constate que~$\widetilde{\psi}(p)$ est un point d'ind\'etermination de $f_{\alpha,\beta}$; autrement 
dit $\widetilde{\psi}$ permute les points d'ind\'etermination de $f_{\alpha,\beta}$. Une \'etude plus pr\'ecise permet de montrer que $p$ est fix\'e par $\widetilde{\psi}$. Les 
param\`etres $\alpha$ et $\beta$ \'etant g\'en\'eriques, l'adh\'erence de l'orbite n\'egative de $p$ par $f_{\alpha,\beta}$ est Zariski dense; puisque $\widetilde{\psi}$ fixe 
chaque \'el\'ement de l'orbite de~$p$, $\widetilde{\psi}$ co\"{i}ncide avec l'identit\'e.

\section{R\'ecapitulatif}\label{Sec:res}

\noindent Le tableau qui suit r\'esume les diff\'erents cas rencontr\'es \`a conjugaison birationnelle pr\`es. La colonne \og$\mathrm{C}(\phi)$\fg\, pr\'ecise la suite 
exacte de $\pi_2$ en fonction des propri\'et\'es de $\mathrm{H}$ et $\mathrm{im}\,\pi_2$.

\smallskip

\begin{landscape}
\vspace*{0.8cm}
\begin{small}
\begin{tabular}{|c|c|c|c|c|c|c|}
\hline
   &  &  &  &  && \\
type $\phi$ & type $\pi_2(\phi)$ & $\mathrm{H}=\ker\pi_{2\vert_{\mathrm{C}(\phi)}}$ & $\mathrm{im}\,\pi_{2_{\vert_{\mathrm{C}(\phi)}}}$ &  croissance & $\mathrm{C}(\phi)$ &\\
   &  &  &  & des degr\'es  & &\\
   &  &  &  &  & &\\
   \hline
    &  &  &  &  & &\\
$(x+1,y)$  &$y$ & $\mathrm{J}_a$& $\mathrm{PGL}_2(\mathbb{C})$ &  born\'ee & $\mathrm{J}_a\rtimes\mathrm{PGL}_2(\mathbb{C})$ & $(\ell_1)$\\
    &  &  &  &  & &\\   
       \hline
    &  &  &  &  & &\\
$(ax,y)$, $a\in\mathbb{C}$ & $y$ & $\mathrm{J}_m$ & $\mathrm{PGL}_2(\mathbb{C})$&  born\'ee & $\mathrm{J}_m\rtimes\mathrm{PGL}_2(\mathbb{C})$&$(\ell_2)$\\
    $a$ non racine de l'unit\'e  & & &  &  & &\\
      & & &  &  & &\\
         \hline
    &  &  &  &  & &\\   
$(a(y)x,y)$, $a\in\mathbb{C}(y)\setminus\mathbb{C}$ & $y$ & $\mathrm{J}_m$ & $\mathrm{Stab}(a)$ fini&  lin\'eaire & $\mathrm{J}_m\rtimes\overline{\mathrm{Stab}(a)}$, extension 
finie de $\mathrm{Ab}(\phi)=\mathrm{J}_m$&$(\ell_3)$\\
      & &  &  &  & &\\
         \hline
    &  &  &  &  & &\\   
$\left(\frac{c(y)x+F(y)}{x+c(y)},y\right)$, $\deg F\geq 3$ & $y$ & $\mathrm{J}_F$ & fini &  lin\'eaire &  extension finie de $\mathrm{Ab}(\phi)=\mathrm{J}_F$&$(\ell_4)$\\
     & &  &  &  & &\\
        \hline
    &  &  &  &  & &\\   
 $\left(\frac{c(y)x+y}{x+c(y)},y\right)$ & $y$ & $\mathrm{J}_y$ & $S(c;\alpha)$ &  lin\'eaire & $\mathrm{J}_y\rtimes S(c;\alpha)$, extension finie de $\mathrm{Ab}(\phi)=
\mathrm{J}_y$&$(\ell_5)$\\
  & & &  &  & &\\
     \hline
         &  &  &  & & &\\  
$\phi^k$ d\'ecrit par $(\ell_1)$--$(\ell_5)$ & $\beta y$, $\beta$ d'ordre fini $k$ & & & celle de $\phi^k$ & $\mathrm{C}(\phi)\subset\mathrm{C}(\phi^k)$ d\'ecrit en 
$(\ell_1)$--$(\ell_5)$ & $(\ell_6)$\\ 
  & & &  &  & & \\     
   \hline
    & & &  &  & &\\  
$(x+a(y),\beta y)$  & $\beta y$, $\beta$ d'ordre infini & $\big\{(x+\alpha,y)\,\vert\,\alpha\in\mathbb{C}\big\}$ & $\subset\mathbb{C}^*$ &  lin\'eaire & $0\longrightarrow 
\mathrm{H}\simeq\mathbb{C}\longrightarrow\mathrm{C}(\phi)\longrightarrow\mathrm{im}\,\pi_{2_{\vert_{\mathrm{C}(\phi)}}}\longrightarrow 1$ &$(\ell_7)$\\  
   & & &  &  & & \\
   \hline
    & & &  &  & & \\  
$(a(y)x,\beta y)$  & $\beta y$, $\beta$ d'ordre infini &  $\big\{(\alpha x,y)\,\vert\,\alpha\in\mathbb{C}^*\big\}$ & $\subset\mathbb{C}^*$ &  lin\'eaire &  $0\longrightarrow 
\mathrm{H}\simeq\mathbb{C}^*\longrightarrow\mathrm{C}(\phi)\longrightarrow\mathrm{im}\,\pi_{2_{\vert_{\mathrm{C}(\phi)}}}\longrightarrow 1$ &$(\ell_8)$\\  
   &  &  &  & & &\\
     \hline
    &  &  &  & & &\\  
\og g\'en\'eral multiplicatif\fg & $\beta y$, $\beta$ d'ordre infini & de torsion, fini & $\subset\mathbb{C}^*$ & lin\'eaire & $0\longrightarrow \mathrm{H}\longrightarrow
\mathrm{C}(\phi)\longrightarrow\mathrm{im}\,\pi_{2_{\vert_{\mathrm{C}(\phi)}}}\longrightarrow 1$  &$(\ell_9)$\\ 
     & & &  & & $\mathrm{H}$ fini &  \\  
  & & &  &  & & \\     
 \hline 
\end{tabular}
\end{small}
\end{landscape}

  \begin{landscape}
  \vspace*{3cm}
  \begin{small}
\begin{tabular}{|c|c|c|c|c|c|c|}
\hline
   &  &  &  && &\\
type $\phi$ & type $\pi_2(\phi)$ & $\mathrm{H}=\ker\pi_{2\vert_{\mathrm{C}(\phi)}}$ & $\mathrm{im}\,\pi_{2_{\vert_{\mathrm{C}(\phi)}}}$ &  croissance & $\mathrm{C}(\phi)$ &\\
   & & &  & des degr\'es  & &\\
    &  &  &  &  & &\\
   \hline
   & &  &  &  & &\\   
$(x+a(y),y+1)$ & $y+1$ & $\big\{(x+\beta, y)\,\vert\,\beta\in\mathbb{C}\big\}$& $\subset\mathbb{C}$ &   lin\'eaire & m\'etab\'elien &\\
 $\not\sim(x,y+1)$ &  & & &   & $0\longrightarrow \mathrm{H}\simeq\mathbb{C}\longrightarrow\mathrm{C}(\phi)\longrightarrow\mathrm{im}\,\pi_{2_{\vert_{\mathrm{C}(\phi)}}}
\longrightarrow 1$& $(\ell_{10})$ \\
  & &  &  &  & &\\
     \hline
   & &  &  &  & &\\    
$(b(y)x,y+1)$ &  $y+1$ & $\big\{(\alpha x, y)\,\vert\,\alpha\in\mathbb{C}^*\big\}$ &$\subset\mathbb{C}$ &  lin\'eaire & $0\longrightarrow \mathrm{H}\simeq\mathbb{C}^*
\longrightarrow\mathrm{C}(\phi)\longrightarrow\mathrm{im}\,\pi_{2_{\vert_{\mathrm{C}(\phi)}}}\longrightarrow 1$ & $(\ell_{11})$\\
$\not\sim(x,y+1)$ &  &  & &   & &\\  
  &  &  &  & & &\\  
      \hline 
         &  &  &  && &\\    
 $(b(y)x,y+1)$ & $y+1$ &  $\langle(\beta x,y),\,\left(\frac{B(y)}{x},y\right)\,\vert\, \beta\in\mathbb{C}^*\rangle$ & $\subset\mathbb{C}$ &  lin\'eaire & $0\longrightarrow 
\mathrm{H}\simeq\mathbb{C}^*\rtimes\mathbb{Z}/2\mathbb{Z}\longrightarrow\mathrm{C}(\phi)\longrightarrow\mathrm{im}\,\pi_{2_{\vert_{\mathrm{C}(\phi)}}}\longrightarrow 1$ &\\
  $\not\sim(x,y+1)$ &  & & &  & & $(\ell_{12})$\\ 
  &  &  &  & & &\\  
      \hline 
    &  &  &  & & &\\  
\og g\'en\'eral additif\fg &  $y+1$ & de torsion, fini &  & & $0\longrightarrow \mathrm{H}\longrightarrow\mathrm{C}(\phi)\longrightarrow\mathrm{im}\,\pi_{2_{\vert_{\mathrm{C}
(\phi)}}}\longrightarrow 1$ & $(\ell_{13})$\\ 
     &  & & & & $\mathrm{H}$ fini &\\  
  &  &  &  & & &\\     
   \hline 
\end{tabular}
\end{small}
\end{landscape}

\noindent La ligne $(\ell_5)$ traite les cas $\deg F\leq 2$ (\S \ref{Subsec:crationnelle}).

\noindent Notons que l'on peut pr\'esenter des formes normales lorsque le groupe $\mathrm{H}$ est suffisamment gros. Ainsi en~$(\ell_9)$ et $(\ell_{13})$ il n'y a pas de forme 
normale, tout du moins raisonnable. On constate aussi qu'\`a la ligne $(\ell_9)$ on retrouve la famille $(f_{\alpha,\beta})$ \`a centralisateur isomorphe \`a $\mathbb{Z}$ 
alors que les $\theta_\alpha$, $\alpha\not\in\{0,\,1\}$, dont le centralisateur est aussi $\mathbb{Z}$, sont en $(\ell_{13})$.
Ces deux familles d'exemples montrent que g\'en\'eriquement \`a degr\'e fix\'e le centralisateur d'une transformation birationnelle de $\mathrm{J}$ est isomorphe \`a $\mathbb{Z}$.

\vspace{8mm}

\bibliographystyle{plain}
\bibliography{bibliocroissance}
\nocite{}

\end{document}